\newtheorem{thm}{Theorem}[subsection]
\newtheorem{prop}[thm]{Proposition}
\newtheorem{lem}[thm]{Lemma}
\newtheorem{cor}[thm]{Corollary}
\theoremstyle{definition}
\newtheorem{defn}[thm]{Definition}
\theoremstyle{remark}
\newtheorem{remk}[thm]{Remark}
\newtheorem{remks}[thm]{Remarks}
\newtheorem{exm}[thm]{Example}
\newtheorem{exms}[thm]{Examples}
\newtheorem{notat}[thm]{Notation}
\numberwithin{equation}{subsection}
\newcommand{\CH}{{\rm CH}}
\newcommand{\BGHz}{\mathbf{z}}
\newcommand{\Hom}{{\rm Hom}}
\newcommand{\Spec}{{\rm Spec \,}}
\newcommand{\Sch}{{\operatorname{\mathbf{Sch}}}}
\newcommand{\ds}{{/\kern-3pt/}}
\newcommand{\ess}{{\rm ess}}
\newcommand{\sgn}{{\operatorname{\rm sgn}}}
\newcommand{\un}{\underline}
\newcommand{\ov}{\overline}
\renewcommand{\dim}{\text{\rm dim}}
\newcommand{\tuborg}{\left\{\begin{array}{ll}}
\newcommand{\sluttuborg}{\end{array}\right.}
\begin{document}
\title{On localization for cubical higher Chow groups}
\author{Jinhyun Park}
\address{Department of Mathematical Sciences, KAIST, 291 Daehak-ro Yuseong-gu, Daejeon, 34141, Republic of Korea (South)}
\email{jinhyun@mathsci.kaist.ac.kr; jinhyun@kaist.edu}


\keywords{algebraic cycle, Chow group, higher Chow group, localization, moving lemma}

\subjclass[2020]{Primary 14C25; Secondary 14F42}

\begin{abstract}
We give a purely cubical argument for the localization theorem for the cubical version of higher Chow groups.
\end{abstract}

\maketitle

\setcounter{tocdepth}{1}


\section{Introduction}
In topology, the singular homology theory of a topological space is a foundational tool. Its typical construction uses the topological simplices $\Delta_{\rm top} ^n:= \{ (t_0, \ldots, t_n) \in \mathbb{R}^{n+1} \ | \ \sum_{i=0} ^n t_i = 1, \ t_i \geq 0\}$ as part of building materials, but there is also a variant based on the cubes $I^n:= [0,1]^n$; see W. Massey \cite{Massey}. This cubical version of the singular homology theory is equivalent to the simplicial version in that their homology groups are isomorphic, while there are also some technical advantages such as the product structure.

In algebraic geometry, for the category $\Sch_k$ of $k$-schemes of finite type over a field $k$, the motivic homology theory can be described by the higher Chow theory of S. Bloch \cite{Bloch HC}, which is based on the algebraic simplices $\Delta^n:= \Spec \left( \frac{ k[t_0, \ldots, t_n]}{\sum_{i=0} ^n t_i -1}\right)$ (see B. Totaro \cite[\S 2, p.84]{Totaro2} and other references there for a relevant detailed discussion). It does have its cubical version based on the algebraic cubes $\square^n_k$, where $\square_k:= \mathbb{P}^1_k \setminus \{ 1 \}$, with the faces $\{0, \infty \}$ (see e.g. \cite{Bloch note}). This cubical version is also equivalent to the simplicial version (see e.g. \cite[Theorem 4.3, p.13]{Bloch note} for a spectral sequence argument based on $\mathbb{A}^1$-invariance), while we also have some corresponding advantages such as the product structure. 

Out of numerous properties of the motivic homology theory, one fundamental result is the localization theorem. Although this is important and useful in the theory for various purposes, it is the author's feeling that its proof has gained some notoriety of being difficult to digest. 

\medskip

The purpose of this article is to ease the situation a bit by giving a purely cubical proof of the localization theorem for the cubical version of the motivic homology theory, i.e. the cubical higher Chow groups. More specifically, it reads (see \S \ref{sec:cycles} for the recollections of the definitions):

\begin{thm}\label{thm:main}
Let $Y$ be a scheme of finite type over a field $k$. Let $U \subset Y$ be an open subscheme, and let $W:= Y \setminus U$ be the closed subset with the reduced induced closed subscheme structure. Let $W \overset{i}{\hookrightarrow} Y \overset{j}{\hookleftarrow} U$ be the respective closed and open immersions. Let $d \in \mathbb{Z}$.

Then for the exact sequence of the cubical higher Chow complexes
$$
0 \to z_d (W, \bullet)\overset{i_*}{\to} z_d (Y, \bullet)  \overset{j^*}{\to }z_d (U, \bullet),
$$
the cokernel $\mathcal{C}_{\bullet}:={\rm coker} (j^*)$ of the flat pull-back $j^*$ is acyclic. In particular
$$
z_d (W, \bullet) \to z_d (Y, \bullet) \to z_d (U, \bullet) \to z_d (W, \bullet) [1]
$$
gives a distinguished triangle in the derived category of complexes of abelian groups.
\end{thm}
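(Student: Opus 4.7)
Since $i_*$ is injective on chains, the acyclicity of $\mathcal{C}_\bullet$ is equivalent to the inclusion $\im(j^*) \hookrightarrow z_d(U,\bullet)$ being a quasi-isomorphism. I would approach this via a moving lemma, introducing the ``good position'' subcomplex $z_d^W(Y,\bullet) \subset z_d(Y,\bullet)$ consisting of cycles $Z$ that, in addition to meeting every face $Y \times F$ properly, also meet each $W \times F$ in the expected dimension $d+n-\codim F-\codim_Y W$ for every face $F$ of $\square^n$. The cubical face operators preserve this condition, so $z_d^W(Y,\bullet)$ is a subcomplex. A formal but useful observation: when $\codim_Y W > 0$, the expected-dimension condition applied with $F=\square^n$ forces any cycle in $z_d^W(Y,n)$ supported over $W$ to vanish, so the restriction $j^*|_{z_d^W}: z_d^W(Y,\bullet) \to z_d(U,\bullet)$ is injective on chains.

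\textbf{Key steps.} First, the \emph{cubical moving lemma}: show that the inclusion $z_d^W(Y,\bullet) \hookrightarrow z_d(Y,\bullet)$ is a quasi-isomorphism, by constructing a chain-homotopy operator that moves arbitrary cycles into the good position. Second, \emph{surjectivity on homology} of $j^*|_{z_d^W}$: given a cycle $\alpha \in z_d(U,n)$, take its Zariski closure $\bar\alpha \subset Y \times \square^n$ (an integral cycle of dimension $d+n$) and apply the moving operator -- in a form flexible enough to accept $\bar\alpha$, which need not meet the faces $Y \times F$ properly a priori -- to produce a cycle $Z \in z_d^W(Y,n)$ with $j^*(Z)$ homologous to $\alpha$ in $z_d(U,\bullet)$. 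Combining: $j^*|_{z_d^W}$ is injective on chains and surjective on homology, so $\im(j^*|_{z_d^W}) \hookrightarrow z_d(U,\bullet)$ is a quasi-isomorphism; sandwiched between $\im(j^*|_{z_d^W}) \subseteq \im(j^*) \subseteq z_d(U,\bullet)$, the middle inclusion is also a quasi-isomorphism, whence $\mathcal{C}_\bullet$ is acyclic.

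\textbf{Main obstacle.} The principal difficulty is the cubical moving lemma itself. Unlike in the simplicial setting, the cube $\square = \mathbb{P}^1 \setminus \{1\}$ admits no nontrivial algebraic-group action fixing both distinguished faces $\{0,\infty\}$: the natural $\mathbb{G}_m$-action on $\mathbb{P}^1$ moves the deleted point $\{1\}$ and hence does not preserve $\square$. Consequently the translation-based homotopies used in Bloch's simplicial proof do not transport naively. A viable construction is to build the moving homotopy from a parametrized correspondence on $\square^n \times \square^n \times \square^1$ interpolating between the identity and a suitable ``displacement'' self-map compatible with the faces $\{0,\infty\}$, combined with a translation-type action on an affine chart of the ambient scheme (with the non-quasi-projective case reduced by spreading-out or Zariski descent). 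The principal verifications are that the resulting operator $H$ satisfies the chain-homotopy identity $\partial H + H \partial = \mathrm{id} - \rho$ with $\rho$ landing in $z_d^W(Y,\bullet)$, and that the cubical face structure is respected throughout -- a particular subtlety being the interaction between the homotopy parameter's faces $\{0, \infty\}$ and the $W$-proper intersection conditions.
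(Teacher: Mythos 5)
Your two ``key steps'' are exactly where the whole difficulty of the theorem lives, and the mechanism you propose for them cannot work in the stated generality. Here $Y$ is an arbitrary finite-type $k$-scheme: possibly singular, possibly not quasi-projective. So there is no translation-type action, even on affine charts, that moves cycles inside $Y\times\square^n$ while controlling admissibility; Bloch-style moving (translations by a group action, projecting cones) requires $Y$ smooth and quasi-projective, and reducing the general case ``by Zariski descent'' is circular, since Mayer--Vietoris/descent for these complexes is itself a consequence of localization (see \S \ref{sec:Zariski desc}). For the same reason your Claim 1, a moving lemma putting cycles on an arbitrary $Y$ in good position with respect to $W$, is not available, and your sketch gives no construction of the homotopy $H$ that does not implicitly use such structure on $Y$. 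The known proofs (Bloch, Levine) and the present paper never move cycles in the $Y$-direction at all: the failure of the closure $\bar\alpha$ to meet the faces properly is repaired purely in the cube coordinates, by subdividing $\square^n$ and blowing up faces of the cube (Hironaka's polyhedral game via Spivakovsky, quoted here as Theorem \ref{thm:bloch-levine}), so that the strict transform of $\bar\alpha$ under the level-$M$ subdivision becomes admissible over $Y$. Concretely, the paper normalizes a class of $\mathcal{C}_\bullet$ (Theorem \ref{thm:normalization}), shows a normalized cycle is homologous to its cubical subdivision by explicit bi-division homotopies (Proposition \ref{prop:cub sd}), passes to a face blow-up tower and the homotopy between subdivision levels (Proposition \ref{prop:Levine homo}) to obtain a representative whose closure lies in $z_d(Y,n)$, and treats finite base fields by the pro-$\ell$ transfer argument using Corollary \ref{cor:pbpf degree}. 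Your ``main obstacle'' paragraph misidentifies the obstruction: the relevant maps ($\mu(y,z)=yz$, scalings, involutions in the $\square_\psi=\mathbb{A}^1$ model) need not be face-preserving group actions; what is genuinely hard is making the closure over $Y$ admissible, and that is solved by subdivision and face blow-ups, not by moving in $Y$.

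There is also a formal gap in the reduction itself. Injectivity of $j^*|_{z_d^W}$ on chains together with surjectivity on homology does not imply that $\im(j^*|_{z_d^W})\hookrightarrow z_d(U,\bullet)$ is a quasi-isomorphism: you also need injectivity on homology, i.e.\ that a cycle $j^*Z$ which bounds in $z_d(U,\bullet)$ already bounds inside the image. That requires moving a bounding chain on $U$ relative to its boundary, a problem of exactly the same difficulty as your Claim 2 and not addressed in the sketch; and the subsequent ``sandwich'' step only transfers surjectivity, not injectivity, to $\im(j^*)\hookrightarrow z_d(U,\bullet)$. (A smaller point: the observation forcing cycles supported over $W$ to vanish uses $\codim_Y W>0$, which can fail when $W$ contains irreducible components of $Y$, so the good-position condition would at least have to be formulated componentwise.) The paper's argument avoids all of this by working directly with the cokernel complex $\mathcal{C}_\bullet$ and showing every normalized class is represented by a cycle extending to an admissible cycle on $Y$, which kills the class in $\mathrm{H}_n(\mathcal{C}_\bullet)$ outright.
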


The validity of this theorem is certain up to quasi-isomorphism, because it is known that the cubical and the simplicial versions of the higher Chow complexes are quasi-isomorphic to each other, while the localization theorem is known already for the simplicial version by \cite{Bloch moving}, \cite{Levine JPAA}, and \cite{Levine moving}.

Then why does the author want to prove it again? Decades after the arrival of the original simplicial version in S. Bloch \cite{Bloch HC}, many of the new generation of researchers in the field nowadays prefer to stick to the cubical version. 

On the other hand, the proof is given for the simplicial version only in \cite{Bloch moving}, \cite{Levine JPAA}, and \cite{Levine moving}, and the author witnessed a few frustrated younger researchers and students (including his younger self) who had significant difficulties in following the winding arguments there. So, it is meaningful to provide a road that is a bit less curvy, if not straight.

\medskip

The localization theorem for the simplicial version was first claimed for quasi-projective $k$-schemes in \cite[Theorem (3.3)]{Bloch HC}. Based on it, a higher generalization of the Grothendieck-Riemann-Roch theorem was obtained. A bit of gap was found, and a different idea (e.g. \cite{Levine K}) via $K$-theory was introduced to prove the Grothendieck-Riemann-Roch without resorting to the localization theorem. Meantime, the gap was filled up, and for the quasi-projective case the first proof was provided in  \cite{Bloch moving}. Later it was further generalized to finite type $k$-schemes when $k$ is a field or a DVR by \cite{Levine JPAA} and \cite{Levine moving}. 

\medskip

The arguments are nontrivial, and largely there are at least two parts that make it hard to digest those papers.

The first part is related to Hironaka's polyhedral game and its partial resolution by Spivakovsky \cite{Spivakovsky}. It was used by S. Bloch \cite{Bloch moving} (and its generalization in \cite{Levine JPAA}) to prove the existence of a suitable tower of face blow-ups for the cube $S_0:= \square^n$ that improves the proper intersection property of cycles with the faces. This is done purely in the cubical setting. 

The second part is on the need to mix both the simplicial and the cubical settings in the proof. The first part uses only the cubical setting, but since the higher Chow theory considered in \cite{Bloch moving} and \cite{Levine moving} was the simplicial version, this mixing was necessary in \emph{ibids}. This feature made the above papers harder to digest as a result.

\medskip

The specific aim of this paper is to provide a proof written purely in terms of the cubical version of the higher Chow complexes, by improving the above second part. To do so, we use a normalization theorem for $\mathcal{C}_{\bullet}$ that every cycle class can be represented by a ``normalized" cycle, i.e., it has the trivial codimension $1$ faces. New ingredients developed in detail in this article are pertaining to the bi-divisions and the cubical subdivisions. Roughly, we show that every normalized cycle that has the trivial codimension $1$ faces is equivalent to its bi-divisions and the cubical subdivisions. Once we have it, combined with the face blow-ups of cubes and a couple of auxiliary arguments, we can deduce the localization for the cubical higher Chow groups. The author believes this is psychologically and technically easier to follow.

\medskip

As an application, we give a concrete complex of flasque sheaves that is quasi-isomorphic to the sheafification $\BGHz_d (\bullet)_Y$ of cubical higher Chow complex, and we use it to show that the cycle class groups $\CH_d (Y, n)$ admit a description
$$
\CH_d (Y, n) = \mathbb{H}_{\rm Zar} ^{-n} (Y, \BGHz_d (\bullet)_Y)
$$
in terms of the hypercohomology of a complex of sheaves. In the middle, we deduce the Zariski Mayer-Vietoris property. While this is known already to experts, the author believes that the explicit argument from this perspective is meaningful.

\medskip

\textbf{Convention:} Here $k$ is a fixed arbitrary field. A $k$-scheme is assumed to be of finite type over $k$, but we do not assume anything extra unless we specifically say otherwise.

\medskip

\textbf{Acknowledgements:} JP was partially supported by the National Research Foundation of Korea (NRF) grant (2018R1A2B6002287) funded by the Korean government (Ministry of Science and ICT).

\section{Cubical higher Chow cycles}\label{sec:cycles}

In \S \ref{sec:cycles}, we recall some elementary definitions and results for the cubical higher Chow cycles that we need in this article.

\subsection{The cycle complexes} \label{subsec:cycles}
For the projective line $\mathbb{P}_k ^1$, let $\square_k := \mathbb{P}_k ^1 \setminus \{ 1 \}$. Let $\square^0 _k := \Spec (k)$. For $n \geq 1$, let $\square^n_k$ be the $n$-fold self product of $\square_k$ over $k$. Let $(y_1, \ldots, y_n) \in \square^n_k$ be the coordinates.

Define a face $F \subset \square_k ^n$ to be the closed subscheme given by a system of equations of the form $\{ y_{i_1} = \epsilon_1, \ldots, y_{i_s} = \epsilon_s\}$ for an increasing sequence of indices $1 \leq i_1 < \ldots < i_s \leq n$ and $\epsilon_j \in \{ 0, \infty \}$. In case of the empty set of equations, we take $F= \square_k ^n$. 
The codimension one face given by $\{ y_i = \epsilon\}$ is denoted by $F_i ^{\epsilon}$. 

\medskip

Let $Y$ be a $k$-scheme of finite type. The fiber product $Y \times_k \square_k ^n$ is often denoted by $\square_Y^n$. A \emph{face} $F_Y$ of $\square_Y^n$ is given by $Y \times _k F$ for a face $F \subset \square_k ^n$. Here, if $Y$ is equidimensional in addition, then so is $Y \times_k \square_k^n$.

\medskip

For a $k$-scheme $W$ of finite type and an integer $r \geq 0$, let $z_r (W)$ be the free abelian group on the set of $r$-dimensional integral closed subschemes of $W$. Its member is called an $r$-cycle on $W$.

\medskip

Here is the definition of the cubical version of the higher Chow cycles (cf. S. Bloch \cite{Bloch HC} and B. Totaro \cite{Totaro}):

\begin{defn}\label{defn:HCG}

Let $Y$ be a $k$-scheme of finite type. Let $n \geq 0$ and $d \in \mathbb{Z}$ be integers. Let $\un{z}_d (Y, n)$ be the subgroup of $z_{d+n} (\square_Y ^n)$ generated by the integral closed subschemes $Z \subset \square_{Y} ^n$ of dimension $d+n$, that intersect properly with $Y \times F$ for each face $F \subset \square_k ^n$. 

We may call the cycles in $\un{z}_d (Y, n)$ \emph{admissible} for simplicity. In case $Y$ is equidimensional, we also write $\un{z}^q (Y,n ) = \un{z}_d (Y, n)$ for $q:= \dim \ Y - d$.

\medskip

Let $n \geq 1$. We have the boundary operators $\partial: \un{z}_d (Y, n) \to \un{z}_d (Y, n-1)$ defined as follows. For $1 \leq i \leq n$ and $\epsilon \in \{ 0, \infty \}$, let $\iota_i ^{\epsilon}: \square_Y ^{n-1} \hookrightarrow \square_Y^n$ be the closed immersion given by the equation $\{y_i = \epsilon \}$. For each integral cycle $Z \in \un{z}_d (Y, n)$, define the face $\partial_i ^{\epsilon} (Z)$ to be the cycle associated to the scheme-theoretic intersection $(\iota_i ^{\epsilon})^* (Z) $. By definition, we have $\partial_i ^{\epsilon} (Z) \in \un{z} _d (Y, n-1)$.

 Let $\partial:= \sum_{i=1} ^n (-1)^i (\partial_i ^{\infty} - \partial_i ^0)$. 
 From the usual cubical formalism, one checks that $\partial \circ \partial = 0$ and that we have a cubical abelian group $(\un{n} \mapsto \un{z} _d (Y, n))$, where $\un{n}:= \{ 0, 1, \ldots, n \}$. 

Let $z_d (Y, \bullet)$ be the associated non-degenerate complex, i.e. the quotient complex
\begin{equation}\label{eqn:Deg}
z_d (Y, \bullet) = \un{z}_d (Y, \bullet)/ \un{z}_d (Y, \bullet)_{\rm degn},
\end{equation}
where the subgroup $\un{z}_d (Y, \bullet)_{\rm degn}$ consists of the degenerate cycles; this group is generated by the pull-backs of cycles via the coordinate projections $\square_{Y} ^n \to \square_{Y} ^{n-1}$ that forget one of the coordinates. This $z_d (Y, \bullet)$ is called the cubical higher Chow complex.

Define the homology group $\CH_d (Y, n):= {\rm H}_n (z_d (Y, \bullet))$, and it is called the cubical higher Chow group. 

If $Y$ is equidimensional, we similarly define $z^q (Y, \bullet)$ and $\CH^q (Y, n)$, in terms of the codimension $q \geq 0$.
\qed
\end{defn}

\subsection{Normalization of some cubical groups}\label{sec:normalization}  

In \S \ref{sec:normalization} we recall the normalization theorem for the cubical higher Chow complex. Here, the word normalization originates from the usage of the word in the Dold-Kan correspondence for simplicial abelian groups. See J. P. May \cite[Theorem 22.1, p.94]{May}.

For a cubical abelian group, a normalization theorem may not hold in general, but M. Levine \cite[Lemma 1.6]{Levine SM} had identified a sufficient condition for it. It says an \emph{extended} cubical abelian group always has this property. Recall (\cite[\S 1]{Levine SM})  that an extended cubical abelian group is a functor $\textbf{ECube}^{\rm op} \to ({\rm Ab})$, where $\textbf{ECube}$ is the smallest symmetric monoidal subcategory of the category of sets containing the category $\textbf{Cube}$, and the morphism $\mu: \un{2} \to \un{1}$.

\medskip

For the cubical cycle groups, we know the following:

\begin{thm}\label{thm:normalization0}
Let $Y$ be a $k$-scheme of finite type. For the cubical higher Chow complex $z_d (Y, \bullet)$, let $z_{d}^N (Y, n) \subset z_d (Y, n)$ be the subgroup generated by cycles $Z$ such that $\partial_i ^0 (Z) = 0$ for all $1 \leq i \leq n $ and $\partial_i ^{\infty} (Z) = 0$ for $2 \leq i \leq n$.

With the ``last" face $\partial_1 ^{\infty}$, this gives the {normalized subcomplex} $(z_{d}^N (Y, \bullet), \partial_1 ^{\infty}) \hookrightarrow (z_d (Y, \bullet), \partial)$, and this inclusion is a quasi-isomorphism. 
\end{thm}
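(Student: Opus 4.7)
The plan is to deduce the theorem from Levine's general normalization theorem for extended cubical abelian groups \cite[Lemma 1.6]{Levine SM}; its hypothesis is that the cubical abelian group in question extends to a functor on $\textbf{ECube}^{\op}$. The work therefore reduces to two ingredients: verification of the extended cubical structure on the cycle complex, and the invocation of Levine's lemma.

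For the first step, I would check that the cubical cycle group $\un{n}\mapsto \un{z}_d(Y,n)$ admits an additional operator corresponding to the extra morphism $\mu:\un{2}\to\un{1}$ in $\textbf{ECube}$, compatible with the existing face and degeneracy operators. Geometrically, $\mu$ is realized by a suitable morphism between algebraic cubes built from the multiplicative structure on $\mathbb{G}_m\subset \square$, and the operator $\mu^*$ on cycles is the induced flat pull-back. The cubical and extended-cubical identities follow from identities of the underlying morphisms of schemes, while admissibility under $\mu^*$ reduces to a dimension count combined with an explicit description of how the faces of $\square_Y^{n+1}$ pull back.

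Once the extended structure is in place, the quasi-isomorphism statement is precisely what \cite[Lemma 1.6]{Levine SM} yields. The argument there proceeds by induction on the number of faces that have already been normalized away, constructing at each stage explicit chain homotopies (built from the extra operator $\mu^*$) that identify the larger complex with the subcomplex where one more face is required to vanish. After finitely many steps, only the face $\partial_1^\infty$ remains, and by the sign formula $\partial=\sum_{i=1}^n(-1)^i(\partial_i^\infty-\partial_i^0)$ together with the vanishing of all other faces on normalized cycles, the restricted differential on $z_d^N(Y,\bullet)$ is $\pm\partial_1^\infty$, as claimed. The main obstacle is the admissibility check for $\mu^*$, which is the one genuinely geometric input; the rest of the argument is formal once this verification has been carried out.
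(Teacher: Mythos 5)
Your proposal is correct and follows essentially the same route the paper takes: the paper treats Theorem \ref{thm:normalization0} as the statement that $(\un{n}\mapsto z_d(Y,n))$ is an extended cubical abelian group (the admissibility of $\mu^*$ for $\mu(y_1,y_2)=y_1y_2$ in the $\square_\psi=\mathbb{A}^1$ model being the geometric input, cited from Bloch's notes and Li) combined with Levine's normalization lemma \cite[Lemma 1.6]{Levine SM}, exactly as in its proof of Theorem \ref{thm:normalization}. The only part you leave as a sketch, the proper-intersection check for the flat pull-back $\mu^*$, is precisely the step the paper itself defers to \cite{Bloch note} and \cite{Li}, so no gap beyond that citation.
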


This was proven in Bloch's notes \cite[Theorem 4.4.2]{Bloch note}. In the published literature, one can find an essentially identical proof in M. Li \cite[Theorem 2.6]{Li}. For a pair $(Y, D)$ consisting of a smooth $k$-scheme and an effective Cartier divisor, similar results are known for the higher Chow cycles $z_d (Y|D, \bullet)$ with modulus. See Krishna-Park \cite[Theorem 3.2]{KP DM}.

\medskip

We need the following version for $\mathcal{C}_{\bullet}$ of Theorem \ref{thm:main} as well, which essentially follows from the arguments of Theorem \ref{thm:normalization0}.

\begin{thm}\label{thm:normalization}
Let $Y$ be a $k$-scheme of finite type. Let $U \subset Y$ be a nonempty open subscheme, and let
$$
\mathcal{C}_{\bullet}:= {\rm coker} (\rho^Y_U: z_d (Y, \bullet) \to z_d (U, \bullet)).
$$

Let $\mathcal{C}_{\bullet}^N \subset \mathcal{C}_{\bullet}$ be the normalized subcomplex, i.e. $\mathcal{C}_{n}^N$ consists of the images of cycle classes $Z \in z_d (U, n)$ such that $\partial_i ^{0} Z \in \rho^Y _U (z_d (Y, n-1))$ for each $1 \leq i \leq n$ and $\partial_i ^{\infty} Z \in \rho^Y _U (z_d (Y, n-1))$ for all $2 \leq i \leq n$

Then the inclusion $(\mathcal{C}_{\bullet} ^N , \partial_1 ^{\infty}) \hookrightarrow (\mathcal{C}_{\bullet}, \partial)$ is a quasi-isomorphism.
\end{thm}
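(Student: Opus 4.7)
The plan is to reduce Theorem \ref{thm:normalization} to Theorem \ref{thm:normalization0} by showing that the cokernel complex $\mathcal{C}_\bullet$ inherits the extended cubical abelian group structure carried by the two cycle complexes that define it. Once this is established, Levine's normalization criterion \cite[Lemma 1.6]{Levine SM}---the same criterion underlying Theorem \ref{thm:normalization0}---applies verbatim to $\mathcal{C}_\bullet$.

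The first step is to verify that the flat pullback $\rho_U^Y = j^*: z_d (Y, \bullet) \to z_d (U, \bullet)$ is a morphism of extended cubical abelian groups. Concretely, $\rho_U^Y$ must commute with the face maps $\partial_i^\epsilon$, with the degeneracy maps coming from the coordinate projections $\square^n \to \square^{n-1}$, and with the extra generator $\mu$ of $\textbf{ECube}$. All of these structure morphisms act only on the cube factor $\square_k^n$ and are pulled back to $Y$ or $U$ via the product with $\mathrm{id}_Y$ or $\mathrm{id}_U$; since flat pullback along $j \times \mathrm{id}_{\square^n}$ commutes with base change in the cube factor, the required compatibilities follow from the commutativity of fiber product squares. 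Admissibility is preserved in each case because the relevant faces of pulled-back cycles remain in proper position.

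With this in hand, the second step is nearly automatic: because $\rho_U^Y$ is a degreewise morphism of extended cubical abelian groups, the cokernel $\mathcal{C}_\bullet$ inherits such a structure, and the normalized subcomplex $\mathcal{C}_\bullet^N$ of the statement is precisely the Dold--Kan-type normalization, namely the intersection of the kernels of the induced $\partial_i^0$ (for $1 \leq i \leq n$) and $\partial_i^\infty$ (for $2 \leq i \leq n$), with the remaining face $\partial_1^\infty$ playing the role of differential (up to sign). Applying Levine's lemma to the extended cubical abelian group $\mathcal{C}_\bullet$---equivalently, pushing the explicit chain-homotopy construction of Bloch \cite[Theorem 4.4.2]{Bloch note} and Li \cite[Theorem 2.6]{Li} through the surjection $z_d(U, \bullet) \twoheadrightarrow \mathcal{C}_\bullet$, which is functorial in the underlying extended cubical abelian group---yields that $(\mathcal{C}_\bullet^N, \partial_1^\infty) \hookrightarrow (\mathcal{C}_\bullet, \partial)$ is a quasi-isomorphism.

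I expect the only genuine obstacle to be bookkeeping: one must pin down the scheme-theoretic realization of $\mu$ on $\square^m$ (the multiplication-like morphism ensuring that $z_d(-, \bullet)$ is extended cubical rather than merely cubical) and then verify that this morphism is stable under the open immersion $j: U \hookrightarrow Y$ in the sense that $\mu$-induced pullbacks commute with $\rho_U^Y$. Once this naturality is recorded, the descent from $z_d(-, \bullet)$ to $\mathcal{C}_\bullet$ is formal and requires no new moving arguments.
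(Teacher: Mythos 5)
Your proposal is correct and follows essentially the same route as the paper: one shows $\mathcal{C}_\bullet$ is an extended cubical abelian group by checking that $\rho^Y_U$ commutes with the structure maps (in particular with $\mu^*$, via the evident commutative square for flat pullback along $j$), and then applies Levine's criterion \cite[Lemma 1.6]{Levine SM} exactly as in Theorem \ref{thm:normalization0}.
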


\begin{proof}
This essentially comes from the argument of Theorem \ref{thm:normalization0}. We sketch the proof.

For the proof, via the automorphism $y \mapsto \frac{y}{y-1}$ of $\mathbb{P}^1_k$, we may replace $\square_k ^1$ with $\mathbb{A}^1$ with the faces $\{ 1, 0 \}$. To stress that our space and the faces are different, and when we use this setting, we will use the notation $\square_{\psi}^1= \mathbb{A}^1$.

By \cite[Lemma 1.6]{Levine SM}, it is enough to show that $(\un{n}\mapsto \mathcal{C}_n)$ is an extended cubical abelian group. The morphism $\un{2} \to \un{1}$ in \textbf{ECube} corresponds to the morphism $\mu: \square_{\psi}^2 \to \square_{\psi}^1$ given by $(y_1, y_2) \mapsto y_1 y_2$.

 Since $(\un{n} \mapsto z_d (Y, n))$ is an extended cubical abelian group (see \cite[Theorem 4.4.2]{Bloch note}, \cite[Theorem 2.6]{Li}), we already know that for the morphism $\mu: \square_{\psi} ^2 \to  \square_{\psi}^1$, we have $\mu^*: z_d (-, 1) \to z_d (-, 2)$ for $(-) = Y, U$. They form the commutative diagram
$$
\xymatrix{ z_d (Y, 1) \ar[d] ^{\mu^*} \ar[r] ^{\rho^Y_U} & z_d (U, 1) \ar[d] ^{\mu^*} \\
z_d (Y, 2) \ar[r] ^{\rho^Y_U} & z_d (U, 2).}
$$
Thus we deduce the homomorphism $\mu^*: \mathcal{C}_1 \to \mathcal{C}_2$. Since a general morphism in \textbf{ECube} is generated by the original morphisms of \textbf{Cube} and the above $\mu$, we deduce the theorem.
\end{proof}

\subsection{Pull-backs and push-forwards}

We recall the following basic results. Their proofs are immediately deduced by the corresponding assertions for the simplicial versions in \cite[Proposition (1.3), Corollary (1.4)]{Bloch HC}, \emph{mutatis mutandis}.

\begin{prop}
\begin{enumerate}
\item Let $f: Y_1 \to Y_2$ be a proper morphism of $k$-schemes of finite type. Then we have the associated push-forward morphism $f_*: z_d  (Y_1, \bullet) \to z_d (Y_2, \bullet)$ of complexes of abelian groups.
\item Let $f: Y_1 \to Y_2$ be a flat morphism of $k$-schemes of finite type of relative dimension $c$. Then we have the associated pull-back morphism $f^*: z_d  (Y_2, \bullet) \to z_{d+c} (Y_1, \bullet)$ of complexes of abelian groups.
\end{enumerate}
\end{prop}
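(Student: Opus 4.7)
The plan is to transport Bloch's construction for the simplicial version, \cite[Proposition (1.3), Corollary (1.4)]{Bloch HC}, step by step to the cubical setting. Given $f : Y_1 \to Y_2$, the key will be that the base-change $f_n := f \times_k \mathrm{id}_{\square_k^n} : \square_{Y_1}^n \to \square_{Y_2}^n$ inherits properness (resp.\ flatness of relative dimension $c$) from $f$, so the usual proper push-forward (resp.\ flat pull-back) of cycles along $f_n$ is available. Using this, I would verify the three compatibilities that are needed: (a) admissibility is preserved, (b) the induced maps commute with the cubical boundary $\partial$, and (c) degenerate cycles are sent to degenerate cycles, so that the maps descend to the non-degenerate quotient $z_d(-,\bullet)$ of \eqref{eqn:Deg}.

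For (1), I would check admissibility by observing that $f_n^{-1}(Y_2 \times F) = Y_1 \times F$ for every face $F \subset \square_k^n$, which yields the estimate $\dim (\supp ((f_n)_* Z) \cap (Y_2 \times F)) \leq \dim (Z \cap (Y_1 \times F))$; combined with the admissibility of $Z$, this would give the bound required to land in $\un{z}_d(Y_2, n)$. For (2), the analogous check is easier: flat pull-back along $f_n$ preserves codimensions, so $f_n^*$ automatically sends admissible cycles to admissible ones. Compatibility with the degenerate subcomplex in (c) will follow in both cases from the fact that $f_n$ commutes with the coordinate projections $\square_Y^n \to \square_Y^{n-1}$ that define $\un{z}_d(-, \bullet)_{\rm degn}$.

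The remaining point (b) is what I expect to be the only real technical issue. It will reduce to the identity $(\iota_i^\epsilon)^* \circ (f_n)_* = (f_{n-1})_* \circ (\iota_i^\epsilon)^*$ for each codimension-one face inclusion $\iota_i^\epsilon$ in case (1), and to its flat-pull-back dual in case (2). The hard part will be justifying this identity at the level of cycles, and my plan is to handle it by noting that the Cartesian square
\[
\xymatrix{ \square_{Y_1}^{n-1} \ar@{^{(}->}[r]^{\iota_i^\epsilon} \ar[d]_{f_{n-1}} & \square_{Y_1}^n \ar[d]^{f_n} \\ \square_{Y_2}^{n-1} \ar@{^{(}->}[r]^{\iota_i^\epsilon} & \square_{Y_2}^n }
\]
is Tor-independent, since each cubical face $F_i^\epsilon$ is the pullback of $\{ \epsilon \} \subset \square_k^1$ along the flat coordinate projection $\square_k^n \to \square_k^1$. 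With this Tor-independence in hand, the standard base-change formula for cycles applies, and the remainder of the argument becomes the direct cubical analogue of the cited work.
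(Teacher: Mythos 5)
Your overall plan is exactly the \emph{mutatis mutandis} that the paper intends: its ``proof'' is nothing but a citation of Bloch's simplicial statements, and your steps are the standard transport of that argument. The admissibility checks are fine ($f_n^{-1}(Y_2\times F)=Y_1\times F$ gives the dimension bound for the push-forward, and flatness of relative dimension $c$ gives it for the pull-back), and compatibility with the degenerate subgroups follows, as you say, from the (cycle-level) commutation of proper push-forward and flat pull-back with the flat coordinate projections.

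The one place where your justification does not go through as written is the proper half of (b), which you yourself single out as the crux. The square you draw \emph{is} Tor-independent (the face equation $y_i-\epsilon$ is a nonzerodivisor on both $\square_{Y_1}^n$ and $\square_{Y_2}^n$, being pulled back along the flat projection to $\square_k^1$), but Tor-independence is not the hypothesis that produces the identity $(\iota_i^\epsilon)^*\circ (f_n)_* = (f_{n-1})_*\circ(\iota_i^\epsilon)^*$ \emph{at the level of cycles} when $f$ is proper but not flat: there is no off-the-shelf base-change formula of that shape, and the statement one can quote in that generality (compatibility of $f_*$ with intersection by a divisor, Fulton's Prop.~2.3) is an identity of classes modulo rational equivalence, which is too weak because $\partial$ is defined on cycles. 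What actually makes the identity hold on the nose is the admissibility you have already established: (i) if $\dim f_n(Z)<\dim Z$, then $(f_n)_*Z=0$, and since $Z\not\subset Y_1\times F_i^\epsilon$ forces $f_n(Z)\not\subset Y_2\times F_i^\epsilon$, the cycle $(f_{n-1})_*\partial_i^\epsilon(Z)$ is a $(d+n-1)$-cycle supported in dimension at most $\dim f_n(Z)-1\leq d+n-2$, hence zero; (ii) if $f_n|_Z$ is generically finite of degree $m$ onto $W=f_n(Z)$, the required equality $(f_{n-1})_*[Z\cap (Y_1\times F_i^\epsilon)]=m\,[W\cap (Y_2\times F_i^\epsilon)]$ is the classical order/norm computation $\sum_{V'|V}[k(V'):k(V)]\,\ord_{V'}(g^*t)=\ord_V(N(g^*t))=m\,\ord_V(t)$ (Fulton, Prop.~1.4 and Appendix A), valid precisely because the face meets $Z$ and $W$ properly. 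Similarly, in the flat half of (b) the real point is not the square but multiplicities: one must compare the fundamental cycle of $f_n^{-1}(Z)\cap(Y_1\times F_i^\epsilon)$ with the face of the cycle $f_n^*Z$ taken componentwise; this works because every associated point of $f_n^{-1}(Z)$ dominates $Z$ (flatness) and so lies off the faces, making the face equation a nonzerodivisor and letting lengths add (Fulton, Lemmas 1.7.1--1.7.2 and the additivity lemma in Appendix A). With these substitutions your outline closes up and coincides with the argument behind Bloch's Proposition (1.3) and Corollary (1.4) that the paper cites.
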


\begin{cor}\label{cor:pbpf degree}
Let $Y$ be a $k$-scheme of finite type. Let $k \hookrightarrow k'$ be a finite extension of fields and let $\pi: Y_{k'} \to Y_k$ be the induced base change map, which is both flat and proper. Then the composite
$$
z_d (Y_k , \bullet) \overset{\pi^*}{\to} z_d (Y_{k'}, \bullet) \overset{\pi_*}{\to} z_d (Y_k, \bullet)
$$
is the multiplication by $[k': k]$.
\end{cor}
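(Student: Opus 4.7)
The plan is to reduce the statement to a direct cycle-theoretic computation on each integral generator, then invoke the classical degree formula for finite flat base change. Since both $\pi^*$ and $\pi_*$ are $\mathbb{Z}$-linear and commute with the boundary $\partial$ (by the preceding Proposition, applied respectively in the flat and the proper case), and since degenerate cycles are preserved, it suffices to verify $\pi_*\pi^*[Z] = [k':k]\cdot [Z]$ for each integral admissible closed subscheme $Z \subset \square_{Y_k}^n$ of dimension $d+n$. Admissibility of $\pi^*Z$ is automatic from flatness of $\pi$.

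Next I would unpack $\pi^*[Z]$. Since $\pi$ is flat, $\pi^*[Z]$ equals the cycle associated to the scheme-theoretic preimage $\pi^{-1}(Z) = Z \times_k k'$. Let $Z'_1,\dots,Z'_r$ be the irreducible components of $Z_{k'} := Z\times_k k'$, with generic points $\eta_i$, and set $n_i := \mathrm{length}_{\mathcal{O}_{Z_{k'},\eta_i}}(\mathcal{O}_{Z_{k'},\eta_i})$. Then
$$
\pi^*[Z] = \sum_{i=1}^{r} n_i\, [Z'_i].
$$
Each restriction $\pi|_{Z'_i}: Z'_i \to Z$ is finite (being a base change of the finite map $\Spec k'\to \Spec k$) and surjective, so by the definition of proper push-forward,
$$
\pi_*[Z'_i] = [k(Z'_i):k(Z)] \cdot [Z].
$$
Summing, $\pi_*\pi^*[Z] = \left(\sum_{i} n_i\,[k(Z'_i):k(Z)]\right) [Z]$.

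The final step is the classical identity. At the generic point $\eta$ of $Z$, the fiber of $\pi^{-1}(Z)\to Z$ is $\Spec(k(Z)\otimes_k k')$, a finite $k(Z)$-algebra of dimension $[k':k]$ whose minimal primes are exactly the localizations at the $\eta_i$, with residue fields $k(Z'_i)$ and lengths $n_i$. The standard length–degree formula for a finite dimensional algebra over a field then yields
$$
[k':k] = \dim_{k(Z)}\bigl(k(Z)\otimes_k k'\bigr) = \sum_i n_i\,[k(Z'_i):k(Z)],
$$
which finishes the proof.

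There is no genuine obstacle here; the only point requiring any care is tracking multiplicities when $k'/k$ is inseparable (so some $n_i >1$), but this is absorbed by taking $\pi^*[Z]$ to be the full cycle of the scheme-theoretic preimage rather than its underlying reduced cycle. Everything else is formal from the Proposition, namely that $\pi^*$ and $\pi_*$ are well-defined chain maps on $z_d(-,\bullet)$.
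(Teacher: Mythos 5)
Your argument is correct. Note, though, that the paper itself gives no computation at this point: Corollary \ref{cor:pbpf degree} (together with the Proposition preceding it) is simply deduced ``mutatis mutandis'' from the simplicial analogues in Bloch's original paper (\cite[Proposition (1.3), Corollary (1.4)]{Bloch HC}), so the route you take is different in presentation, if not in substance. You unwind the statement on integral generators: $\pi^*[Z]$ is the cycle of the scheme-theoretic preimage $Z\times_k k'$, each component $Z'_i$ is finite and surjective over $Z$ so $\pi_*[Z'_i]=[k(Z'_i):k(Z)]\,[Z]$, and the identity $\sum_i n_i\,[k(Z'_i):k(Z)] = \dim_{k(Z)}\bigl(k(Z)\otimes_k k'\bigr)=[k':k]$ for the Artinian algebra $k(Z)\otimes_k k'$ closes the argument; the reduction to generators is licensed by the Proposition, since $\pi^*$ and $\pi_*$ are chain maps preserving admissibility and degeneracy. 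This is exactly the classical computation underlying Bloch's corollary, so the content matches; what your write-up buys is self-containedness and an explicit treatment of the inseparable case through the lengths $n_i$, whereas the paper buys brevity by citation. The only point worth stating explicitly (you use it implicitly) is that every generic point $\eta_i$ of $Z\times_k k'$ lies over the generic point of $Z$ and that each $Z'_i$ again has dimension $d+n$, both of which follow from finiteness and flatness of $Z\times_k k'\to Z$; with that said, the proof is complete.
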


\section{Cubical subdivisions and the localization theorem}\label{sec:localization}

The goal of \S \ref{sec:localization} is to prove the following Theorem \ref{thm:localization}, which is Theorem \ref{thm:main} .

\begin{thm}\label{thm:localization}
Let $Y$ be a $k$-scheme of finite type. Let $U \subset Y$ be an open subscheme.

Consider the restriction morphism
\begin{equation}\label{eqn:localization}
 \rho ^Y_U: z_d (Y, \bullet) \to z_d (U, \bullet).
\end{equation}

Then $\mathcal{C}_{\bullet}:= {\rm coker} (\rho^Y_U)$ is acyclic.
\end{thm}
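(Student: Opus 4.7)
The plan is to use Theorem \ref{thm:normalization} to replace $\mathcal{C}_{\bullet}$ with its normalized subcomplex $(\mathcal{C}_{\bullet}^N, \partial_1^{\infty})$ and prove that the latter is acyclic. Unwinding the definitions, this amounts to the following concrete task: given $Z \in z_d(U, n)$ all of whose codimension-one faces $\partial_i^{\epsilon}Z$ belong to $\rho^Y_U(z_d(Y, n-1))$, I must construct $H \in z_d(U, n+1)$ whose faces $\partial_i^0 H$ (for $1 \leq i \leq n+1$) and $\partial_i^{\infty} H$ (for $2 \leq i \leq n+1$) all lie in $\rho^Y_U(z_d(Y, n))$, and whose remaining face $\partial_1^{\infty} H$ differs from $Z$ by an element of $\rho^Y_U(z_d(Y, n))$. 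Such an $H$ will be a nullhomotopy witnessing $[Z] = 0$ in $\mathcal{C}_{\bullet}^N$.

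My first move is to form the scheme-theoretic closure $\overline{Z} \subset \square_Y^n$ of $Z \subset \square_U^n$, the natural candidate extension to $Y$. It restricts to $Z$ on $U$, but generally fails to be admissible on $Y$ because it can meet faces of $\square_Y^n$ improperly along the closed complement $W = Y \setminus U$. To measure and cure this failure inside the cubical framework, I would construct, following the Spivakovsky-type polyhedral game used in \cite{Bloch moving} (and extended in \cite{Levine JPAA}), a tower of face blow-ups $\pi : \widetilde{\square}^n \to \square^n$ such that the proper transform of $\overline{Z}$ under $\mathrm{id}_Y \times \pi$ meets every face of $\widetilde{\square}_Y^n$ properly. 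Crucially, this first stage is purely cubical and hence transplants verbatim to our setting.

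The second stage is to convert the admissible data on the blow-up into a genuine homotopy on the ordinary cube $\square^{n+1}_Y$. This is where I would use the paper's newly introduced \emph{bi-divisions} and \emph{cubical subdivisions}. The idea is that each subdivision of a normalized cycle in an auxiliary $\square^1$-direction produces, on $\square^{n+1}$, a cycle equivalent modulo degeneracies and images from $Y$ to the original one on $\square^n$; moreover, the two new faces created in that extra direction can be arranged so that their difference encodes precisely the gap between $Z$ on $U$ and the corrected proper-transform extension on $Y$. Assembling the subdivisions dictated by the blow-up tower yields an explicit $H \in z_d(U, n+1)$, and verifying the required face identities then reduces to a routine face computation on the cube together with the normalization theorem.

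The main obstacle will be the key lemma about subdivisions: establishing, entirely inside the cubical higher Chow setting and for an arbitrary $k$-scheme $Y$, that a normalized cycle is equivalent in $\mathcal{C}_{\bullet}$ to each of its bi-divisions and cubical subdivisions, with all intermediate cycles verifiably admissible. Closely related is the need to show that the Spivakovsky tower actually resolves improper intersections against \emph{all} faces of $\widetilde{\square}^n$, not only the ones made transparent at the first blow-up, which requires a careful induction on the order of contact along $W$. A secondary technical point, handled by Corollary \ref{cor:pbpf degree}, is that separating components of $\overline{Z}$ during the moving process may require passing to a finite extension $k'/k$ and then descending by the trace, since the subdivisions are most cleanly written after such a base change.
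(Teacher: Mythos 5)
Your outline reproduces the paper's strategy in all essentials: normalize via Theorem \ref{thm:normalization}, prove that a normalized cycle is equivalent in $\mathcal{C}_{\bullet}$ to its bi-divisions and cubical subdivisions (this is exactly the content of Propositions \ref{prop:cdc} and \ref{prop:cub sd}), take the closure over $Y$, repair the improper intersections with a Spivakovsky-type tower of face blow-ups (Theorem \ref{thm:bloch-levine}), and compare the level-$0$ and level-$M$ subdivisions by an explicit homotopy (Proposition \ref{prop:Levine homo} and Remark \ref{remk:5.2.12'}). Whether one phrases the conclusion as an explicit nullhomotopy $H$ in the normalized complex or, as the paper does, as a chain of equivalences modulo boundaries showing the class lifts to $z_d(Y,n)$, is only a cosmetic difference.

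There is, however, one genuine gap. Every general-position step you invoke --- choosing the point $c$ so that the scalings, bi-divisions and subdivisions stay admissible (Lemma \ref{lem:scaling adm} explicitly needs $|k|=\infty$ to guarantee a $k$-rational point in a dense open set), and the general $k$-rational point in Theorem \ref{thm:bloch-levine}, which is stated only for infinite $k$ --- forces the argument as you describe it to work only over an infinite field, and your proposal never reduces the finite-field case to that one. Your appeal to Corollary \ref{cor:pbpf degree} does not close this: passing to a single finite extension $k'/k$ and descending by $\pi_*\pi^*$ only yields $[k':k]\,\xi=0$ for a class $\xi$ killed over $k'$, i.e.\ $\xi$ is torsion, not zero (and your stated reason for the base change, ``separating components,'' is not the actual issue). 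The paper's Step 1 handles finite $k$ by the pro-$\ell$ trick: run the infinite-field argument over two infinite pro-$\ell_i$ towers for distinct primes $\ell_1\neq\ell_2$, obtain $\ell_1^{N_1}\xi=0$ and $\ell_2^{N_2}\xi=0$ via Corollary \ref{cor:pbpf degree} applied along the towers, and conclude $\xi=0$ by coprimality. You need this (or an equivalent device) before the theorem holds for an arbitrary base field.
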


Our focus in \S \ref{sec:localization} is on the cokernel of \eqref{eqn:localization}. The kernel of \eqref{eqn:localization} plays roles in \S \ref{sec:sheaf}.

\medskip

Part of the argument needs one of the techniques, ``blowing-up of faces of a cube" of S. Bloch \cite{Bloch moving}, and its systematic reorganization and generalization by M. Levine \cite{Levine JPAA} and \cite{Levine moving}. But there is a noteworthy difference in our arguments compared to \emph{ibids.}

\medskip

One of aspects there was a need to mix both the simplicial and the cubical settings. Since the face blow-ups are done in the cubical setting, while the simplicial version of the complexes was used there, the resulting mixing was needed. 

Here, we use the cubical version only. Instead we complement the arguments with the normalization theorem (Theorem \ref{thm:normalization}), and new ingredients of the explicit cubical subdivision calculations given in \S \ref{sec:cubical subdivision}.

Some materials from S. Bloch \cite{Bloch moving} and M. Levine \cite{Levine JPAA}, \cite{Levine moving} are recalled and sketched in \S \ref{sec:moving by blow-up}. There is no new material in \S \ref{sec:moving by blow-up}. The proof of Theorem \ref{thm:localization} is finished in \S \ref{sec:proof localization}.

\subsection{Cubical subdivision}\label{sec:cubical subdivision}
For a while, we suppose $k$ is an infinite field. It will be dropped in \S \ref{sec:proof localization}.

\medskip

The purpose of \S \ref{sec:cubical subdivision} is to study cubical subdivisions of cubical higher Chow cycles over a $k$-scheme $Y$, and to prove that a cycle (with a normalization condition, i.e., all codimension $1$ faces vanish) is equivalent to its cubical subdivision modulo a boundary. Its precise statement is given later in Proposition \ref{prop:cub sd}. We defer it as we need a couple of definitions, though below we illustrate roughly what we are up to do. Let $n \geq 1$.

\medskip

For a $k$-rational point $c = (c_1, \ldots, c_n) \in \square^n_k$ not meeting any faces, topologically imagine the cubical subdivision given by the point $c$ as depicted in the Figure 1.

\begin{center}{Figure 1}
\begin{tikzpicture}[line cap=round,line join=round,>=triangle 45,x=1.0cm,y=1.0cm]
\clip(-0.3,-0.3) rectangle (3.1,3.1);
\fill[line width=2.pt,fill opacity=0.10000000149011612] (0.,3.) -- (0.,0.) -- (3.,0.) -- (3.,3.) -- cycle;
\draw [line width=2.pt] (0.,3.)-- (0.,0.);
\draw [line width=2.pt] (0.,0.)-- (3.,0.);
\draw [line width=2.pt] (3.,0.)-- (3.,3.);
\draw [line width=2.pt] (3.,3.)-- (0.,3.);
\draw [line width=2.pt,dash pattern=on 5pt off 5pt] (1.56,4.)-- (1.56,0.);
\draw [line width=2.pt,dash pattern=on 5pt off 5pt] (0.,1.76)-- (4.,1.76);
\begin{scriptsize}
\draw (1.75,-0.19) node {$c_1$};
\draw (-0.19,1.65) node {$c_2$};
\end{scriptsize}
\end{tikzpicture}
\end{center}

This picture is just for our intuition only and one should keep in mind that our algebraic ``cubes'' are still affine spaces. Literally cutting along the dotted lines is neither possible nor a good thing to perform in algebraic geometry.

What we really do is to make copies of the original cycle $Z$ on $Y \times \square^n_k$ as many as the number of vertices, and to regard a scaling and an involution of each copy as a ``piece'' of the subdivision, equipped with a new set of faces given by some of the original ``distinguished" codimension $1$-faces and the new ``internal" codimension $1$-faces given by the hyperplanes $\{y_i = c_i\}$. The magic is that under the normalization theorem, Theorem \ref{thm:normalization}, this cubical subdivision is equivalent to the original one, up to the boundaries of some concrete cycles.

\medskip

The basic story of the proof of Proposition \ref{prop:cub sd} is illustrated in the Figure 2. More precisely, given a normalized cycle ${Z}$, i.e. a cycle all of whose codimension $1$ faces are trivial, we show that for a general $c$, there is a sequence of ``bi-divisions"
$$
{Z} \overset{ \phi_{c, 1}}{\equiv} \delta_{c_1,1} ({Z}) \overset{  \phi_{c, 2}}{\equiv} \delta_{c_2,2} \delta_{c_1,1} ({Z}) \equiv \cdots \overset{  \phi_{c, n}}{\equiv} \delta_{c_n,n} \cdots \delta_{c_1,1} ({Z}) = {\rm sd}_{c} ({Z}),
$$
where $\delta_{c_i,i}$ is the operation of ``dividing" the coordinate $y_i$ into two pieces at $y_i = c_i$ (i.e. making two copies and scaling suitably), and $\equiv$ means an equivalence modulo the boundary $\phi_{c, i}$ of a cycle. 
We make the above description precise in what follows.

\begin{center}{Figure 2}
\begin{tikzpicture}[line cap=round,line join=round,>=triangle 45,x=1.0cm,y=1.0cm]
\clip(-0.1,-0.1) rectangle (12.9,3.3);
\fill[line width=2.pt,fill opacity=0.10000000149011612] (0.,3.) -- (0.,0.) -- (3.,0.) -- (3.,3.) -- cycle;
\fill[line width=2.pt,fill opacity=0.10000000149011612] (5.,3.) -- (5.,0.) -- (6.45,0.) -- (6.45,3.) -- cycle;
\fill[line width=2.pt,fill opacity=0.10000000149011612] (7.,3.) -- (7.,0.) -- (8.,0.) -- (8.,3.) -- cycle;
\fill[line width=2.pt,fill opacity=0.10000000149011612] (10.,3.) -- (10.,2.) -- (11.45,2) -- (11.45,3.) -- cycle;
\fill[line width=2.pt,fill opacity=0.10000000149011612] (10.,1.7) -- (11.45,1.7) -- (11.45,0.) -- (10.,0.) -- cycle;
\fill[line width=2.pt,fill opacity=0.10000000149011612] (11.85,3.) -- (11.85,2) -- (12.7,2) -- (12.7,3) -- cycle;
\fill[line width=2.pt,fill opacity=0.10000000149011612] (11.85,0.) -- (11.85,1.7) -- (12.7,1.7) -- (12.7,0.) -- cycle;
\draw [line width=2.pt] (0.,3.)-- (0.,0.);
\draw [line width=2.pt] (0.,0.)-- (3.,0.);
\draw [line width=2.pt] (3.,0.)-- (3.,3.);
\draw [line width=2.pt] (3.,3.)-- (0.,3.);
\draw [line width=2.pt] (5.,3.)-- (5.,0.);
\draw [line width=2.pt] (5.,0.)-- (6.45,0.);
\draw [line width=2.pt] (6.45,0.)-- (6.45,3.);
\draw [line width=2.pt] (6.45,3.)-- (5.,3.);
\draw [line width=2.pt] (7.,3.)-- (7.,0.);
\draw [line width=2.pt] (7.,0.)-- (8.,0.);
\draw [line width=2.pt] (8.,0.)-- (8.,3.);
\draw [line width=2.pt] (8.,3.)-- (7.,3.);
\draw [line width=2.pt] (10.,3.)-- (10.,2.);
\draw [line width=2.pt] (10.,2.)-- (11.45,2);
\draw [line width=2.pt] (11.45,2)-- (11.45,3.);
\draw [line width=2.pt] (11.45,3.)-- (10.,3.);
\draw [line width=2.pt] (10.,1.7)-- (11.45,1.7);
\draw [line width=2.pt] (11.45,1.7)-- (11.45,0.);
\draw [line width=2.pt] (11.45,0.)-- (10.,0.);
\draw [line width=2.pt] (10.,0.)-- (10.,1.7);
\draw [line width=2.pt] (11.85,3.)-- (11.85,2);
\draw [line width=2.pt] (11.85,2)-- (12.7,2);
\draw [line width=2.pt] (12.7,2)-- (12.7,3);
\draw [line width=2.pt] (12.7,3)-- (11.86,3.);
\draw [line width=2.pt] (11.85,0.)-- (11.85,1.7);
\draw [line width=2.pt] (11.85,1.7)-- (12.7,1.7);
\draw [line width=2.pt] (12.7,1.7)-- (12.7,0.);
\draw [line width=2.pt] (12.7,0.)-- (11.85,0.);
\draw [line width=2.pt,dash pattern=on 5pt off 5pt] (1.8,3.)-- (1.8,0.);
\draw [line width=2.pt,dash pattern=on 5pt off 5pt] (0.,2.)-- (3.,2.);
\draw [line width=2.pt,dash pattern=on 5pt off 5pt] (5.,2.)-- (6.45,2.);
\draw [line width=2.pt,dash pattern=on 5pt off 5pt] (7.,2.)-- (8.,2.);
\draw [->,line width=2.pt] (3.48,1.22) -- (4.62,1.22);
\draw [->,line width=2.pt] (8.48,1.22) -- (9.6,1.22);
\end{tikzpicture}
\end{center}

\subsubsection{Bi-division}\label{sec:bi-division}
For simplicity, we use $\square_{\psi} := \mathbb{A}^1$ again. Under the identification $\square \simeq \square_{\psi}$ given by the automorphism $y \mapsto \frac{y}{y-1}$ of $\mathbb{P}_k^1$, the faces $\{ 0, 1 \} \subset \square_{\psi}$ correspond to the faces $\{ 0, \infty \} \subset \square$. A face of $\square_{\psi} ^n$ is defined similarly using $\{ 0, 1 \} \subset \square_{\psi}^1$. A \emph{vertex} of $\square^n_{\psi} $ is a face of dimension $0$; it is of the form $v = (v_1, \ldots, v_n)$, where $v_i \in \{ 0, 1 \}$. 
Define ({cf. S. Bloch \cite[Definition (3.1.1)]{Bloch moving}}) its associated sign 
\begin{equation}\label{eqn:sgn_orientation}
\varepsilon (v) := (-1)^m,
\end{equation}
where $m:= |B(v)|$ with $ B(v):= \{ i \ | \ v_i = 1 \} $. 

Until the end of \S \ref{sec:localization}, we use the above cubes $\square_{\psi}^n$ and the resulting faces for the cycles in $z_* (Y, n)$.

\begin{defn}\label{defn:involution} 
For each $1 \leq j \leq n$, the $j$-th involution $\tau_j : \square_{\psi} ^n \to \square_{\psi}^n$ is
$$
y_i \mapsto \tuborg y_i, & \mbox{ if } i \not = j, \\
1- y_j, & \mbox{ if } i = j.\sluttuborg
$$

For a given $c \in k \setminus \{ 0, 1 \}$, the \emph{$j$-th scaling} $ \sigma_{c, j}: \square_{\psi} ^n \to \square_{\psi} ^n$ by $c$  is
$$
y_i \mapsto \tuborg y_i, & \mbox{ if } i \not = j, \\
c y_j, & \mbox{ if } i=j.\sluttuborg
$$ 
Since these are isomorphisms, they do give pull-backs of cycles on $Y \times \square_{\psi}^n$.
\qed
\end{defn}

\begin{lem}\label{lem:involution adm}
Let $Y$ be a $k$-scheme of finite type. 
Let $1 \leq j \leq n$. Then for all ${Z} \in z_d (Y, n)$, we have $\tau_j ^* ({Z}) \in z_d (Y, n)$.
\end{lem}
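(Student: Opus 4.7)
The plan is to exploit the fact that $\tau_j$ is an automorphism of $\square_\psi^n$ (indeed, an involution), which extends to an automorphism $\mathrm{id}_Y \times \tau_j$ of $Y \times \square_\psi^n$. Since pull-back under an isomorphism preserves dimension and preserves integral closed subschemes, the only nontrivial content of the lemma is that $\tau_j^*(Z)$ still meets all faces properly.

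The key observation I would use is that $\tau_j$ permutes the set of faces of $\square_\psi^n$. More precisely, among the codimension $1$ faces of $\square_\psi^n$, the involution $\tau_j$ swaps $\{y_j = 0\}$ with $\{y_j = 1\}$ and fixes every $\{y_i = \epsilon\}$ with $i \neq j$ setwise. Consequently, for any face $F \subset \square_k^n$ defined by a system $\{y_{i_1} = \epsilon_1, \ldots, y_{i_s} = \epsilon_s\}$, the scheme-theoretic image $\tau_j(F)$ is again a face of $\square_k^n$, obtained either by the same equations (if $j \notin \{i_1, \ldots, i_s\}$) or by replacing $y_j = \epsilon$ with $y_j = 1-\epsilon$. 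Thus the map $F \mapsto \tau_j(F)$ is a bijection on the set of faces.

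Now fix any face $F \subset \square_k^n$, and consider the intersection $\tau_j^*(Z) \cap (Y \times F)$. Applying the isomorphism $\mathrm{id}_Y \times \tau_j$, this is identified with $Z \cap (Y \times \tau_j(F))$. Since $\tau_j(F)$ is again a face, the admissibility hypothesis on $Z$ gives that the latter intersection is proper on $\square_Y^n$. Hence so is the former, and $\tau_j^*(Z) \in \un{z}_d(Y, n)$. Passing to the quotient by degenerate cycles (which is obviously preserved by $\tau_j^*$ since $\tau_j$ only touches the $j$-th coordinate and in particular does not alter whether a cycle is pulled back from a coordinate projection fixing the $j$-th coordinate, while for projections forgetting the $j$-th coordinate one applies a straightforward direct check) yields $\tau_j^*(Z) \in z_d(Y, n)$, as required. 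I expect no real obstacle here; the lemma is essentially a bookkeeping statement about how $\tau_j$ acts on the combinatorics of faces.
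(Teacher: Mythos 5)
Your argument is correct and is essentially the paper's proof: since $\tau_j$ is an automorphism of $\square_\psi^n$ permuting the faces (swapping $\{y_j=0\}$ and $\{y_j=1\}$ and fixing the others), proper intersection of $Z$ with each $Y\times F$ transfers immediately to $\tau_j^*(Z)$. The extra remark about degenerate cycles is harmless bookkeeping that the paper leaves implicit.
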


\begin{proof}
Since the involution $\tau_j$ of Definition \ref{defn:involution} interchanges the codimension $1$ faces $\{ y_j = 0\}$ and $\{ y_j = 1 \}$ while leaving all other codimension $1$ faces fixed, the property of proper intersection with the faces for ${Z}$ immediately implies the property for $\tau_j ^* ({Z})$. The lemma follows.
\end{proof}

\begin{lem}\label{lem:scaling adm}
Let $Y$ be a $k$-scheme of finite type. 
Let $1 \leq j \leq n$. Let ${Z} \in z_d (Y, n)$. Then for a general $k$-rational point $c \in \square_{\psi}^1 \setminus \{ 0,1 \}$, both $\sigma_{c, j} ^* ({Z})$ and $\sigma_{1-c, j}^* \tau_j ^* ({Z}) \in z_d (Y, n)$.
\end{lem}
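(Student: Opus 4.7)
The plan is to verify that for a general $k$-rational $c \in \square_{\psi}^1 \setminus \{0, 1\}$, the cycle $\sigma_{c,j}^*(Z)$ meets every face of $\square_{\psi}^n$ properly; the second assertion then follows from Lemma \ref{lem:involution adm} by applying the same argument to the (already admissible) cycle $\tau_j^*(Z)$, with parameter $1 - c$ in place of $c$.

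Since $c \neq 0$, the map $\sigma_{c, j}$ is an automorphism of $Y \times_k \square_{\psi}^n$, so $\sigma_{c,j}^*(Z)$ is a well-defined cycle of dimension $d + n$. For any face $F \subset \square_{\psi}^n$, the intersection $\sigma_{c,j}^*(Z) \cap (Y \times F)$ corresponds under this automorphism to $Z \cap (Y \times \sigma_{c,j}(F))$. If $F$ does not impose the condition $y_j = 1$, then $\sigma_{c,j}(F) = F$, and proper intersection is inherited from the admissibility of $Z$. The only new case is when $F$ involves $y_j = 1$: then $\sigma_{c,j}(F)$ is the subscheme obtained from $F$ by replacing the condition $y_j = 1$ with $y_j = c$.

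It thus suffices to show that for a general $c$, $Z$ intersects each such modified subscheme properly. For each face $F'$ of $\square_{\psi}^n$ of codimension $s - 1$ not mentioning the coordinate $y_j$, set $W_{F'} := Z \cap (Y \times F')$; by admissibility of $Z$, $\dim W_{F'} \leq d + n - (s - 1)$. Consider the projection $\pi \colon W_{F'} \to \square_{\psi}^1$ onto the $y_j$-coordinate. Working component by component of $W_{F'}$, either $\pi$ restricted to a component is dominant, in which case the generic fiber has dimension $\leq d + n - s$, or it is constant, in which case a single value of $c$ is excluded. By upper semicontinuity of fiber dimension, $\pi^{-1}(c) = W_{F'} \cap \{y_j = c\}$ has dimension at most $d + n - s$ for all but finitely many $c$, which is exactly the proper intersection required.

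Since there are only finitely many faces $F'$, and running the same argument for $\tau_j^*(Z)$ with parameter $1 - c$ contributes only finitely many more bad values, together with the excluded set $\{0, 1\}$ only finitely many $c \in k$ are ruled out. As $k$ is assumed infinite in \S \ref{sec:cubical subdivision}, a general $k$-rational $c$ satisfies all conditions simultaneously, giving both $\sigma_{c,j}^*(Z)$ and $\sigma_{1-c,j}^* \tau_j^*(Z)$ in $z_d(Y, n)$. I do not foresee a deep obstacle: the only substantive step is correctly tracking which faces are altered by $\sigma_{c,j}$ and reducing the needed dimension estimate to the generic fiber of the $y_j$-projection; no moving lemma or Chow variety input is required.
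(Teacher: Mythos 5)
Your proposal is correct and follows essentially the same route as the paper: transport the faces through the isomorphism, observe that the only new condition is proper intersection of $Z$ (resp.\ $\tau_j^*(Z)$) with subschemes of the form $Y \times (F' \cap \{y_j = c\})$, and note that this can fail only when $\{y_j = c\}$ contains a component of $Z \cap (Y \times F')$, which rules out at most finitely many $c$, so a general $k$-rational $c$ works since $k$ is infinite here. Your dominant/constant dichotomy for the $y_j$-projection is just a spelled-out version of the paper's finiteness observation, and handling the second cycle via Lemma \ref{lem:involution adm} applied to $\tau_j^*(Z)$ with parameter $1-c$ is an equally valid repackaging of the paper's simultaneous treatment.
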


\begin{proof}
We may assume $Z$ is integral. We prove that for a dense open subset $U \subset \square_{\psi}^1 \setminus \{ 0, 1 \}$, for each $c \in U (k)$, the cycles $\sigma_{c,j}^* ({Z})$ and $\sigma_{1-c, j}^* \tau_j ^* ({Z})$ belong to $z_d (Y, n)$.

Since the maps $\sigma_{c,j}$ and $\tau_j \circ \sigma_{1-c, j}$ are both isomorphisms, the cycles $\sigma_{c,j}^* ({Z})$ and $\sigma_{1-c, j}^* \tau_j ^* ({Z})$ intersect properly with all closed subschemes of the form $Y \times F$ for faces $F  \subset \square_{\psi}^n$ if and only if ${Z}$ intersects properly with all closed subschemes of the form $Y \times F'$, where $F' \subset \square_{\psi}^n$ is a closed subscheme obtained as a finite intersection of the divisors chosen from
\begin{itemize}
\item the original ``distinguished" divisors $\{ y_i = \epsilon \}$ for $1 \leq i \leq n$ and $\epsilon \in \{ 0, 1 \}$, and
\item  the new ``internal" divisor $F(j,c):=\{y_j = c \}$.
\end{itemize}

If $F'$ is entirely chosen from the original distinguished divisors, the proper intersection is already given by that of ${Z}$. It remains to see what happens if $F'$ is of the form $ F'' \cap F (j,c)$, where $F''$ is a face from the original distinguished divisors.

Observe first that for some $c \in k \setminus \{ 0, 1 \}$, the divisor $Y \times F (j,c)$ \emph{does not} intersect the cycle ${Z}\cap ( Y \times F'' )$ properly if and only if $Y \times F (j,c) $ contains a component of ${Z} \cap (Y \times F'')$. This could happen with at most finitely many $c$.

Thus for a nonempty open subset $U \subset \square_{\psi} ^1 \setminus \{ 0, 1 \}$ and for each $k$-rational point $c \in U$ (here we need that $|k|=\infty$ to guarantee that $U (k) \not = \emptyset$), the cycle ${Z}$ intersects $Y \times F'$ properly for any such $F' \subset \square_{\psi}^n$ considered in the above. This shows the proper intersection property for both $\sigma_{c, j} ^* ({Z})$ and $\sigma_{1-c, j}^* \tau_j ^* ({Z})$ for such $c$.
This proves the lemma.
\end{proof}

\begin{defn}\label{defn:bidivision delta}
Let $c \in k \setminus \{ 0, 1 \}$. Let $1 \leq i \leq n$. Define the \emph{bi-division morphism at $y_i = c$} to be the finite formal $\mathbb{Z}$-linear sum of morphisms
$$
\delta_{c, i}:= \sigma_{c, i} - \tau_i \circ \sigma_{1-c, i}.
$$

Let $Y$ be a $k$-scheme of finite type.
For a given cycle ${Z}$ on $Y \times \square_{\psi} ^n$, its \emph{bi-division at $y_i = c$} is the sum of the pull-backs of the cycles
$$
\delta_{c,i}^* ({Z}):= \sigma_{c, i} ^* ({Z}) - \sigma_{1-c, i} ^* \tau_i ^*({Z}).
$$
For simplicity, we just write $\delta_{c,i} ({Z})$ instead of $\delta_{c,i}^* ({Z})$. \qed
\end{defn}

\begin{cor}\label{lem:bidivision adm}
Let $Y$ be a $k$-scheme of finite type. Let $1 \leq i \leq n$. Let ${Z} \in z_d (Y, n)$. 

Then for a general $k$-rational point $c \in \square_{\psi} ^1 \setminus \{ \mbox{faces}\}$, we have $\delta_{c,i} ({Z}) \in z_d (Y, n)$.
\end{cor}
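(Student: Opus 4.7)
The plan is to read off the conclusion directly from Definition \ref{defn:bidivision delta} combined with Lemma \ref{lem:scaling adm}. By definition, $\delta_{c,i}(Z)=\sigma_{c,i}^*(Z)-\sigma_{1-c,i}^*\tau_i^*(Z)$ as a formal $\mathbb{Z}$-linear combination of cycles on $Y\times\square_\psi^n$. Since $z_d(Y,n)$ is an abelian subgroup of the group of dimension $d+n$ cycles on $Y\times\square_\psi^n$, in order to conclude that $\delta_{c,i}(Z)\in z_d(Y,n)$ it suffices to show that each of the two summands lies in $z_d(Y,n)$ separately.

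First, I would apply Lemma \ref{lem:scaling adm} with $j=i$ to the given admissible cycle $Z\in z_d(Y,n)$. That lemma produces a dense open subset $U\subset\square_\psi^1\setminus\{0,1\}$ such that for every $k$-rational point $c\in U$, both cycles $\sigma_{c,i}^*(Z)$ and $\sigma_{1-c,i}^*\tau_i^*(Z)$ are admissible, i.e. intersect $Y\times F$ properly for every face $F\subset\square_\psi^n$. (Note that the infiniteness of $k$ assumed throughout \S\ref{sec:cubical subdivision} is precisely what guarantees $U(k)\neq\emptyset$, so the notion of ``general $k$-rational point'' is nonvacuous.) Taking the formal difference of these two admissible cycles gives $\delta_{c,i}(Z)\in z_d(Y,n)$ for all such $c$, which is exactly the statement of the corollary.

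There is no real obstacle: the corollary is a one-line consequence of the preceding lemma, and was presumably stated separately only to package the bi-division operation for convenient reference in the proof of Proposition \ref{prop:cub sd} later in \S\ref{sec:cubical subdivision}. The only thing worth double-checking is that the dense open $U$ produced by Lemma \ref{lem:scaling adm} is the \emph{same} for both summands $\sigma_{c,i}^*(Z)$ and $\sigma_{1-c,i}^*\tau_i^*(Z)$, which is automatic since the proof of Lemma \ref{lem:scaling adm} simultaneously treats both of them by excluding the finitely many $c$ for which a component of $Z\cap(Y\times F'')$ is contained in $Y\times\{y_i=c\}$, for $F''$ ranging over the finitely many faces made from the distinguished divisors.
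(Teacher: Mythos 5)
Your proposal is correct and matches the paper's own argument, which likewise deduces the corollary directly from Lemma \ref{lem:scaling adm} (applied with $j=i$) together with Definition \ref{defn:bidivision delta}. Your extra remark that Lemma \ref{lem:scaling adm} furnishes a single dense open set of admissible $c$ for both summands simultaneously is accurate and consistent with how that lemma is stated and proved.
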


\begin{proof}
This follows from Lemma \ref{lem:scaling adm} and Definition \ref{defn:bidivision delta}.
\end{proof}

\subsubsection{Homotopy for bi-division}\label{sec:homo bidivision}
Under some assumptions on a cycle ${Z}$ on $Y \times \square_{\psi}^n$, we want to relate ${Z}$ to its bi-division $\delta_{c, i} ({Z})$ at $y_i = c_i$ via an explicit homotopy cycle. This is done in steps. The construction of the homotopy relies on:

\begin{defn}\label{defn:eta}
Let $c \in k\setminus \{ 0, 1 \}$. Define morphisms $\eta ^0_c $ and $\eta ^1_c : \square_{\psi} ^2 \to \square_{\psi} ^1$ by
$$
\tuborg \eta_c ^0 (y, z) = (1 - (1-c)(1-z))y, \\
\eta_c ^1 (y, z) = 1 - (1-c) y (1-z).\sluttuborg
$$
\end{defn}

\begin{lem}\label{lem:eta bdry} The following properties hold:
\begin{enumerate}
\item $\tuborg \eta_c ^0 (y, 0) = cy,  \ \ \ \ \  & \eta_c ^0 (y, 1) = y,  \\
\eta_c ^0 (0, z) = 0, & \eta_c ^0 (1, z) = 1 - (1-c)(1-z). \sluttuborg$

\item $\tuborg \eta_c ^1 (y, 0) = 1- (1-c) y, \ \ \ \ & \eta_c ^1 (y, 1) = 1, \\
\eta_c ^1 (0, z) = 1, & \eta_c ^1 (1, z) = 1- (1-c)(1-z).\sluttuborg$

\end{enumerate}

\end{lem}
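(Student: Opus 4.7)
The plan is entirely routine: the lemma asserts eight explicit identities obtained by restricting the two maps of Definition \ref{defn:eta} to the four codimension $1$ faces of $\square_\psi^2$, so I would simply substitute $y \in \{0,1\}$ and $z \in \{0,1\}$ into the defining formulas and simplify. No auxiliary construction or geometric input is needed.

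For part (1), I would evaluate $\eta_c^0(y,z) = (1-(1-c)(1-z))y$ at the four faces. Setting $z=0$ gives the factor $1-(1-c) = c$, so $\eta_c^0(y,0) = cy$; setting $z=1$ makes the factor $1$, so $\eta_c^0(y,1)=y$. The identities $\eta_c^0(0,z)=0$ and $\eta_c^0(1,z) = 1-(1-c)(1-z)$ are immediate from the product form, since the factor $y$ either kills the expression or disappears.

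For part (2), the computation is equally direct. Setting $z=0$ in $\eta_c^1(y,z) = 1-(1-c)y(1-z)$ yields $1-(1-c)y$; setting $z=1$ annihilates the second term, giving $\eta_c^1(y,1)=1$. Setting $y=0$ kills the second term as well, giving $\eta_c^1(0,z)=1$, and setting $y=1$ gives $1-(1-c)(1-z)$.

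The only point worth noting is the matching of the ``diagonal'' face values $\eta_c^0(1,z) = \eta_c^1(1,z) = 1-(1-c)(1-z)$, which is what will later allow these two maps to be glued into a homotopy relating $Z$ to its bi-division $\delta_{c,1}(Z)$; but for the present lemma there is no obstacle at all, as every identity is a one-line substitution.
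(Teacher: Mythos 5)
Your proposal is correct and coincides with the paper's proof, which simply observes that all eight identities follow by direct substitution into Definition \ref{defn:eta}. Your explicit evaluations (and the remark about the matching values at $y=1$) are exactly what is intended.
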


\begin{proof}
This is straightforward from Definition \ref{defn:eta}.
\end{proof}

\begin{defn}
Let $1 \leq i \leq n$ and let $\ell = 0, 1$. For a given $k$-rational point $c= (c_1, \ldots, c_n) \in \square_{\psi}^n \setminus \{ \mbox{\rm faces} \}$, define the morphism
$$
H_{(\ell), c, i} ^{n+1} : \square_{\psi} ^{n+1} \to \square_{\psi} ^n, \ \ \ (y_1, \ldots, y_n, y_{n+1})\mapsto(m_1, \ldots, m_n), \ \mbox{where}
$$ 
$$
m_j: = \tuborg y_j, & \mbox{ if } j \not = i,\\
\eta_{c_i} ^{\ell} (y_i, y_{n+1}), & \mbox{ if } j = i, \sluttuborg
$$
 for $1 \leq j \leq n$. \qed
\end{defn}

\begin{lem}\label{lem:Hell flat}
The morphisms $H_{(\ell), c, i} ^{n+1} : \square_{\psi} ^{n+1} \to \square_{\psi} ^{n}$ are flat.
\end{lem}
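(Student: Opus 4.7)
The plan is to reduce the statement to the flatness of the two-variable building blocks $\eta_c^\ell:\square_\psi^2\to\square_\psi^1$ from Definition~\ref{defn:eta}, and then to verify this reduced claim directly by exploiting that the target is one-dimensional.

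First I would make explicit that $H^{n+1}_{(\ell),c,i}$ is, up to permutations of coordinates, an external product. After renaming the source coordinates so that the pair $(y_i,y_{n+1})$ sits at the end of $(y_1,\dots,y_n,y_{n+1})$, and similarly renaming the target so that $m_i$ sits at the end of $(m_1,\dots,m_n)$, the morphism becomes
\[
\mathrm{id}_{\square_\psi^{n-1}}\times \eta_{c_i}^\ell : \square_\psi^{n-1}\times\square_\psi^2 \longrightarrow \square_\psi^{n-1}\times\square_\psi^1.
\]
Permutations of coordinates are isomorphisms, the identity is flat, and an external product of two flat $k$-morphisms is flat; hence the question reduces to showing each $\eta_c^\ell:\square_\psi^2\to\square_\psi^1$ is flat, for $c\in k\setminus\{0,1\}$ and $\ell\in\{0,1\}$.

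For the reduced claim I would invoke the standard fact that a morphism from an integral scheme to a Dedekind scheme (i.e.\ a one-dimensional regular integral Noetherian scheme) is flat as soon as it is dominant, since over a PID any torsion-free module is flat. In our setting $\square_\psi^2=\mathbb{A}^2_k$ is integral and $\square_\psi^1=\mathbb{A}^1_k$ is Dedekind, and the required dominance is immediate: because $c\ne 0,1$, neither
\[
\eta_c^0(y,z)=(1-(1-c)(1-z))y \qquad\text{nor}\qquad \eta_c^1(y,z)=1-(1-c)y(1-z)
\]
is a constant polynomial, and any non-constant morphism from an integral scheme to a one-dimensional irreducible target is dominant. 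There is essentially no obstacle: once the product decomposition is written down, the remaining input is the textbook Dedekind/PID flatness criterion.
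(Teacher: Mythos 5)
Your proposal is correct. The first reduction is exactly the paper's: the paper also observes that $H^{n+1}_{(\ell),c,i}$ is, up to a permutation of coordinates, ${\rm Id}_{\square_\psi^{n-1}}\times \eta_{c_i}^\ell$, so everything hinges on the flatness of $\eta_c^\ell:\square_\psi^2\to\square_\psi^1$. Where you diverge is in how that flatness is established. The paper factors $\eta_c^0$ and $\eta_c^1$ as explicit composites of involutions $\tau_j$, scalings $\sigma_{1-c,j}$ (all isomorphisms) and the multiplication map $\mu(y,z)=yz$, whose flatness it already has in hand; see \eqref{eqn:H flat 0} and \eqref{eqn:H flat 1}. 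You instead invoke the Dedekind-target criterion: $\square_\psi^2=\mathbb{A}^2_k$ is integral, $\square_\psi^1=\mathbb{A}^1_k$ is regular of dimension one, and a non-constant polynomial map is dominant (the induced map $k[t]\to k[y,z]$ is injective, so $k[y,z]$ is a torsion-free, hence flat, $k[t]$-module). This is sound, a touch more elementary, and in fact does not even use $c\neq 0,1$, since $1-(1-c)(1-z))y$ and $1-(1-c)y(1-z)$ are non-constant for every $c$ when $\ell=0$, and for $\ell=1$ only $c\neq 1$ matters; the hypothesis is harmless but not needed for flatness. What your route does not provide, and the paper's factorization does, is the reuse in the admissibility statement (Lemma \ref{lem:Hell adm}): there the same decomposition into $\mu$, involutions and scalings is what shows $(H^{n+1}_{(\ell),c,i})^*$ preserves admissible cycles for general $c$. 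So the paper's choice buys a second application of the same computation, while yours isolates the flatness claim with a standard commutative-algebra fact; for the present lemma alone, either is complete.
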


\begin{proof}
The morphism $H_{(\ell), c,i}^{n+1}$ is a permutation of ${\rm Id}_{\square_{\psi} ^{n-1}} \times \eta_{c_i} ^{\ell}$. So, it is enough to check that the morphisms $\eta_c ^{\ell}: \square_{\psi} ^2 \to \square_{\psi} ^1$ in Definition \ref{defn:eta} are flat. 

For $\ell=0$, the map $\eta_c ^0$ is equal to the composite
\begin{equation}\label{eqn:H flat 0}
\square_{\psi}^2 \overset{\tau_2}{\to} \square_{\psi}^2 \overset{\sigma_{1-c, 2}}{\longrightarrow} \square_{\psi}^2 \overset{\tau_2}{ \to} \square_{\psi} ^2\overset{\mu}{ \to} \square_{\psi}^1,
\end{equation}
where $\mu (y, z) = yz$. All morphisms in \eqref{eqn:H flat 0} other than $\mu$ are isomorphisms, while $\mu$ is flat. Thus $\eta_c ^0$ is flat.

Similarly for $\ell=1$, the map $\eta_c ^1$ is equal to the composite
\begin{equation}\label{eqn:H flat 1}
\square_{\psi}^2 \overset{\tau_2}{\to} \square_{\psi}^2 \overset{\sigma_{1-c, 1}}{\longrightarrow} \square_{\psi} ^2 \overset{\mu}{\to} \square_{\psi} ^1 \overset{\tau_1}{\to} \square_{\psi}^1,
\end{equation}
where all morphisms in \eqref{eqn:H flat 1} other than $\mu$ are isomorphisms, while $\mu$ is flat. Thus $\eta_c ^1$ is flat.
\end{proof}

\begin{lem}\label{lem:Hell adm}
Let $Y$ be a $k$-scheme of finite type. 
Let ${Z} \in z_d (Y, n)$. 

Then the flat pull-back $(H_{(\ell), c, i} ^{n+1} )^* ({Z})$ exists as a cycle on $Y \times \square_{\psi}^{n+1}$, and for a general $k$-rational point $c \in \square_{\psi}^n \setminus \{ \mbox{faces} \}$, we have $(H_{(\ell), c, i} ^{n+1} )^* ({Z}) \in z_d (Y, n+1)$.
\end{lem}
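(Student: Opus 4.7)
The existence of $(H_{(\ell), c, i}^{n+1})^*(Z)$ as a cycle on $Y \times \square_{\psi}^{n+1}$ is immediate from Lemma \ref{lem:Hell flat}, so the real content is the admissibility assertion: the pull-back must intersect every face $Y \times F'$ of $Y \times \square_{\psi}^{n+1}$ in the expected codimension for a general choice of $c$. My plan is to reduce this to two simpler checks by exploiting the factorization of $\eta_{c_i}^{\ell}$ already used in the proof of flatness. For $\ell = 0$, decomposition \eqref{eqn:H flat 0} gives $\eta_{c_i}^0 = \mu \circ \phi_{c_i}$ with $\phi_{c_i}(y,z) = (y, 1-(1-c_i)(1-z))$ an automorphism of $\square_{\psi}^2$; the case $\ell = 1$ admits a symmetric decomposition via \eqref{eqn:H flat 1}. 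Extending by the identity on the remaining coordinates and reindexing so that $\mu$ acts on positions $(i, n+1)$, I would write
$$
H_{(\ell), c, i}^{n+1} = \widetilde{M} \circ \widetilde{\phi}_c,
$$
where $\widetilde{M} \colon \square_{\psi}^{n+1} \to \square_{\psi}^n$ multiplies the $i$-th and $(n+1)$-st coordinates while leaving the others alone, and $\widetilde{\phi}_c$ is an automorphism of $\square_{\psi}^{n+1}$ acting only on $y_{n+1}$.

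The first step is to verify that $\widetilde{M}^*(Z) \in z_d(Y, n+1)$ without any genericity on $c$. A direct inspection on each face $F' \subset \square_{\psi}^{n+1}$ shows that $\widetilde{M}(F')$ is again a face of $\square_{\psi}^n$: the images of $\{y_i = 0\}$ and $\{y_{n+1} = 0\}$ are both the face $\{y_i = 0\}$; the images of $\{y_i = 1\}$ and $\{y_{n+1} = 1\}$ are the whole $\square_{\psi}^n$; and the codimension-$2$ cases reduce to one of these. The restriction $\widetilde{M}|_{F'}$ is in each case either an isomorphism or a flat coordinate projection, so admissibility of $Z$ on $Y$ transports directly to admissibility of $\widetilde{M}^*(Z)$. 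This is essentially the observation, already invoked in the proof of Theorem \ref{thm:normalization}, that $\mu$ is a morphism in the extended cubical structure on $z_d(Y, \bullet)$.

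The second and principal step is to verify that $\widetilde{\phi}_c^*(\widetilde{M}^*Z)$ is admissible for a general $c$. Since $\widetilde{\phi}_c$ fixes every coordinate other than $y_{n+1}$ and sends $\{y_{n+1} = 1\}$ to itself, it preserves every face of $\square_{\psi}^{n+1}$ not of the form $F'_0 \cap \{y_{n+1} = 0\}$, and for those faces admissibility follows at once from Step 1. The only new conditions come from faces $F' = F'_0 \cap \{y_{n+1} = 0\}$ with $F'_0$ a face in the remaining coordinates, whose $\widetilde{\phi}_c$-images are $F'_0 \cap \{y_{n+1} = c_i\}$. By Step 1, $\widetilde{M}^*(Z) \cap (Y \times F'_0)$ has the correct dimension, and intersecting further with the non-face hyperplane $\{y_{n+1} = c_i\}$ preserves this dimension for all but finitely many $c_i$, exactly as in the proof of Lemma \ref{lem:scaling adm}. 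Taking the union of these finite bad sets over the finitely many $F'_0$ (and both values of $\ell$) produces the required dense open locus of admissible $c$. The main technical burden I anticipate is the bookkeeping across the faces; no geometric input is required beyond the genericity argument of Lemma \ref{lem:scaling adm}.
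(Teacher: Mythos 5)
Your proposal is correct and follows essentially the same route as the paper: both factor $H_{(\ell),c,i}^{n+1}$ through the multiplication map $\mu$ and automorphisms of the cube via the decompositions \eqref{eqn:H flat 0}--\eqref{eqn:H flat 1}, use that $\mu^*$ preserves admissibility (the extended cubical structure invoked in Theorem \ref{thm:normalization}), and dispose of the new internal hyperplane $\{y_{n+1}=c_i\}$ by the finite-bad-set genericity argument of Lemma \ref{lem:scaling adm}. The only caveat is that for $\ell=1$ the factorization is not literally $\widetilde{M}\circ\widetilde{\phi}_c$ with $\widetilde{\phi}_c$ acting only on $y_{n+1}$: by \eqref{eqn:H flat 1} there is an additional involution $\tau_i$ on the target (and the internal hyperplane appears in the correspondingly modified position), but this costs nothing since Lemma \ref{lem:involution adm} handles the involution.
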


\begin{proof}

The map $H_{(\ell), c,i}^{n+1}$ is flat by Lemma \ref{lem:Hell flat}, so that the flat pull-back $(H_{(\ell), c, i} ^{n+1} )^* ({Z})$ exists in ${z} _{d+n+1} (Y \times \square_{\psi} ^{n+1})$.

 It remains to check that it belongs to $z_d (Y, n+1)$ for a general $c$. Here, by \eqref{eqn:H flat 0} and \eqref{eqn:H flat 1} in the proof of Lemma \ref{lem:Hell flat}, the morphism $\eta_{c_i} ^{\ell}$ is a composite of $\mu$, involutions and a scaling isomorphism in Definition \ref{defn:involution}. 
 
 We already know that $\mu^*$ sends admissible cycles to admissible ones (\cite[Theorem 4.4.2]{Bloch note}, \cite[Theorem 2.6]{Li}, or proof of Theorem \ref{thm:normalization}). For the other two morphisms, we have Lemmas \ref{lem:involution adm}, and \ref{lem:scaling adm}. Thus the statement follows. Here a generic choice of $c$ is required to apply the Lemma \ref{lem:scaling adm}. Thus $(H_{(\ell), c, i} ^{n+1} )^* ({Z}) \in z_d (Y, n+1)$.
\end{proof}

We inspect the faces of $(H_{(\ell), c, i} ^{n+1} )^* ({Z})$. We introduce an auxiliary notation:

\begin{defn}For $c= (c_1, \ldots, c_n)$ and for $1 \leq j \leq n$, let
$$
c/j:= (c_1, \ldots, c_{j-1}, \check{c}_j, c_{j+1}, \ldots, c_n),
$$
where $\check{c}_j$ means $c_j$ is omitted.\qed
\end{defn}

The following is rather straightforward but tedious to check. We record the details to dispel potential concerns of the reader, and of the author himself as well:

\begin{lem}\label{lem:bidivision homotopy}
For $1 \leq i \leq n$, $1 \leq j \leq n+1$, $\epsilon \in \{ 0, 1 \}$, $\ell \in \{ 0, 1 \}$ and a $k$-rational point $c \in \square_{\psi} ^n \setminus \{ \mbox{\rm faces} \}$, we have the following:
\begin{enumerate}
\item For $j \not = i$ with $1 \leq j \leq n$, we have
$$
H_{(\ell), c, i}^{n+1} \circ \iota_j ^{\epsilon} = \tuborg \iota_j ^{\epsilon} \circ H_{ (\ell), c/j, i-1} ^n, & \mbox{ if } j < i, \\
\iota_j ^{\epsilon} \circ H_{(\ell), c/j, i} ^n , & \mbox{ if } j > i.\sluttuborg
$$
\item Now let $j=i$. The composite $H_{(\ell), c, i} ^{n+1} \circ \iota_i ^0$ is equal to the morphism
$$
\square_{\psi} ^n \to \square_{\psi}^n, \ \ (y_1, \ldots, y_n) \mapsto (y_1, \ldots, y_{i-1}, \ell, y_i, \ldots, y_{n-1}),
$$
where the coordinate $y_n$ is ignored.

\item We have the equality $H_{(0), c , i} ^{n+1} \circ \iota_i ^1 = H_{(1), c, i} ^{n+1} \circ \iota_i ^1$. 

\item $H_{(0), c, i} ^{n+1} \circ \iota_{n+1} ^{\epsilon} = \tuborg \sigma_{c_i, i}, & \mbox{ if } \epsilon = 0, \\
{\rm Id}, & \mbox{ if } \epsilon = 1.\sluttuborg$

\item $H_{(1), c, i} ^{n+1} \circ \iota_{n+1} ^{0} =  \tau_i \circ \sigma_{1-c_i, i}$, while $H_{(1), c, i} ^{n+1} \circ \iota_{n+1} ^{1}$ is equal to the morphism
$$
\square_{\psi}^n \to \square_{\psi}^n, \ \ (y_1, \ldots, y_n) \mapsto (y_1, \ldots, y_{i-1}, 1, y_{i+1}, \ldots, y_n),
$$
where the coordinate $y_i$ is ignored.
\end{enumerate}

\end{lem}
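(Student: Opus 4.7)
The plan is to verify the five identities by direct substitution into the definition of $H_{(\ell),c,i}^{n+1}$, using the boundary evaluations of $\eta_c^{\ell}$ recorded in Lemma \ref{lem:eta bdry}. No new ideas are needed beyond careful bookkeeping with the coordinates. Recall that $H_{(\ell),c,i}^{n+1}(y_1,\dots,y_n,y_{n+1}) = (m_1,\dots,m_n)$ where $m_j = y_j$ for $j\neq i$ and $m_i = \eta_{c_i}^{\ell}(y_i,y_{n+1})$, and that $\iota_j^{\epsilon}$ inserts $\epsilon$ in the $j$-th slot.

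For part (1), since $j\neq i$ and $j\le n$, the face map $\iota_j^{\epsilon}$ sets $y_j=\epsilon$ without disturbing the pair $(y_i,y_{n+1})$, so $m_i$ is unchanged as a function of the remaining coordinates. The remaining coordinates of $H_{(\ell),c,i}^{n+1}\circ\iota_j^{\epsilon}$ are the identity except for the $i$-th slot, and after dropping $y_j$ and reindexing they match $H_{(\ell),c/j,i-1}^n$ when $j<i$ (since $i$ shifts down by one) and $H_{(\ell),c/j,i}^n$ when $j>i$ (since the index $i$ is unaffected). Composing with $\iota_j^{\epsilon}$ afterwards reinserts $\epsilon$ in the correct slot.

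For parts (2)--(5), the compositions concern only the $i$-th output coordinate $\eta_{c_i}^{\ell}(y_i,y_{n+1})$ evaluated at a face. I would invoke Lemma \ref{lem:eta bdry} as follows: in (2) the value $\eta_{c_i}^{\ell}(0,y_{n+1})$ equals $0$ if $\ell=0$ and $1$ if $\ell=1$, i.e.\ $\ell$ in both cases, while $y_{n+1}$ relabels to $y_i$ after the remaining coordinates are renamed; in (3), both $\eta_{c_i}^{0}(1,y_{n+1})$ and $\eta_{c_i}^{1}(1,y_{n+1})$ equal $1-(1-c_i)(1-y_{n+1})$, so the two composites agree; in (4), $\eta_{c_i}^{0}(y_i,0)=c_iy_i$ yields $\sigma_{c_i,i}$ and $\eta_{c_i}^{0}(y_i,1)=y_i$ yields the identity; and in (5), $\eta_{c_i}^{1}(y_i,0)=1-(1-c_i)y_i$ is precisely the $i$-th coordinate of $\tau_i\circ\sigma_{1-c_i,i}$, while $\eta_{c_i}^{1}(y_i,1)=1$ drops the coordinate $y_i$ and inserts the value $1$ in position $i$.

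The only ``hard'' part is really just the indexing in (1) and recognizing $1-(1-c_i)y_i$ as $(\tau_i\circ\sigma_{1-c_i,i})^*y_i$ in (5); both amount to unwinding the definitions. Given these verifications are mechanical, I would present the proof concisely by grouping (2)--(5) as direct applications of Lemma \ref{lem:eta bdry} and noting (1) as a coordinate-permutation check.
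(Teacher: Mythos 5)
Your proposal is correct and follows essentially the same route as the paper: direct substitution into the definition of $H_{(\ell),c,i}^{n+1}$, with the boundary values of $\eta_{c_i}^{\ell}$ from Lemma \ref{lem:eta bdry} handling parts (2)--(5) and a coordinate/reindexing check (including the shift $i\mapsto i-1$ and the constant $(c/j)_{i-1}=c_i$ when $j<i$) handling part (1). The paper merely writes out the reindexing in (1) in explicit subcases, which you compress but do not omit.
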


\begin{proof}
For $(y_1, \ldots, y_n ) \in \square_{\psi}^n$, we have
\begin{equation}\label{eqn:bidivision homotopy 0}
H_{(\ell), c, i}^{n+1} \circ \iota_j ^{\epsilon} (y_1, \ldots, y_n) = H_{(\ell), c, i}^{n+1} ( y_1, \ldots, y_{j-1}, \epsilon, y_j, \ldots, y_n),
\end{equation}
which we call $(m_1, \ldots, m_n).$

\medskip

(1) \emph{Case 1-1:} Suppose $j \leq i-2$. Then \eqref{eqn:bidivision homotopy 0} is
\begin{equation}\label{eqn:bdh 1-1}
(y_1, \ldots, y_{j-1}, \epsilon, y_j, \ldots, y_{i-2}, \eta_{c_i} ^{\ell} (y_{i-1}, y_n), y_i, \ldots, y_{n-1}).
\end{equation}

\medskip

\emph{Case 1-2:} Suppose $j= i-1$. Then \eqref{eqn:bidivision homotopy 0} is
\begin{equation}\label{eqn:bdh 1-2}
(y_1, \ldots, y_{i-2}, \epsilon, \eta_{c_i} ^{\ell} (y_{i-1}, y_n), y_i, \ldots, y_{n-1}).
\end{equation}

\medskip

In both of \eqref{eqn:bdh 1-1} and \eqref{eqn:bdh 1-2}, the expressions are equal to $\iota_j ^{\epsilon} \circ H_{(\ell), c/j, i-1} ^n (y_1, \ldots, y_n)$. This answers (1) when $j<i$.

\medskip

\emph{Case 2-1:} Suppose $i \leq j-2$. Then \eqref{eqn:bidivision homotopy 0} is
\begin{equation}\label{eqn:bdh 2-1}
(y_1, \ldots, y_{i-1}, \eta_{c_i} ^{\ell} (y_i, y_n), y_{i+1}, \ldots, y_{j-1}, \epsilon, y_j, \ldots, y_{n-1}).
\end{equation}

\medskip

\emph{Case 2-2:} Suppose $i=j-1$. Then \eqref{eqn:bidivision homotopy 0} is
\begin{equation}\label{eqn:bdh 2-2}
(y_1, \ldots, y_{i-1}, \eta_{c_i} ^{\ell} (y_i, y_n), \epsilon, y_{i+1}, \ldots, y_{n-1}).
\end{equation}

\medskip

In both of \eqref{eqn:bdh 2-1} and \eqref{eqn:bdh 2-2}, the expressions are equal to $\iota_j ^{\epsilon} \circ H_{(\ell), c/j, i} ^n (y_1,  \ldots, y_n)$. This answers (1) when $j>i$.

\medskip

(2) Here we have $j=i$ and $\epsilon = 0$ in \eqref{eqn:bidivision homotopy 0} so that 
$$
(m_1, \ldots, m_n) = (y_1, \ldots, y_{i-1}, \eta_{c_i} ^{\ell} (0, y_n), y_i, \ldots, y_{n-1}).
$$
By Lemma \ref{lem:eta bdry}, we have $\eta_{c_i} ^{\ell} (0, y_n) = \ell$. This shows (2). 

\medskip

(3) Here we have $j=i$ and $\epsilon = 1$ in \eqref{eqn:bidivision homotopy 0} so that 
\begin{equation}\label{eqn:ulala1}
(m_1, \ldots, m_n) = (y_1, \ldots, y_{i-1}, \eta_{c_i} ^{\ell} (1, y_n), y_i, \ldots, y_{n-1}).
\end{equation}
By Lemma \ref{lem:eta bdry}, we have $\eta_{c_i} ^0 (1, y_n) = \eta_{c_i} ^1 (1, y_n) = 1 - (1-c_i) (1- y_n)$. This shows (3).

\medskip

(4) We have $j= n+1$ with $\ell=0$ in \eqref{eqn:bidivision homotopy 0} so that 
\begin{equation}\label{eqn:bidivision homotopy 2}
(m_1, \ldots, m_n) = (y_1, \ldots, y_{i-1} , \eta_{c_i} ^0 (y_i, \epsilon), y_{i+1}, \ldots, y_n).
\end{equation}
When $\epsilon = 0$, $\eta_{c_i} ^0 (y_i, 0) = c_i y_i$, while when $\epsilon=1$, $\eta_{c_i} ^0 (y_i,1) = y_i$, by Lemma \ref{lem:eta bdry}. Thus \eqref{eqn:bidivision homotopy 2} is $\sigma_{c_i, i} (y_1, \ldots, y_n)$ if $\epsilon = 0$, and $(y_1, \ldots, y_n)$ if $\epsilon = 1$. This shows (4).

\medskip

(5) We have $j=n+1$ with $\ell=1$ in \eqref{eqn:bidivision homotopy 0} so that
\begin{equation}\label{eqn:bidivision homotopy 3}
(m_1, \ldots, m_n) = (y_1, \ldots, y_{i-1} , \eta_{c_i} ^1 (y_i, \epsilon), y_{i+1}, \ldots, y_n).
\end{equation}
When $\epsilon=0$, $\eta_{c_i} ^1 (y_i, 0) = 1- (1-c_i) y_i$, while when $\epsilon =1$, $\eta_{c_i} ^1 (y_i, 1) = 1$, by Lemma \ref{lem:eta bdry}. Thus \eqref{eqn:bidivision homotopy 3} is $\tau_i \circ \sigma_{1-c_i, i}$ if $\epsilon = 0$, while it is $(y_1, \ldots, y_{i-1}, 1, y_{i+1}, \ldots, y_n)$ if $\epsilon = 1$. This shows (5). 
\end{proof}

Using Lemmas \ref{lem:eta bdry} and \ref{lem:bidivision homotopy}, we now check that for a given normalized cycle, we have a concrete homotopy that relates the cycle to its bi-division at $y_i = c_i$. The arguments are straightforward, but the reader may need some patience to read the computational details.

\begin{prop}\label{prop:cdc}
Let $Y$ be a $k$-scheme of finite type.
Let $n \geq 1$ be an integer. Let $A_{\bullet}$ be one of the complexes
\begin{enumerate}
\item [1)] $z_d (Y, \bullet)$,
\item [2)] $  \mathcal{C}_{\bullet}= {\rm coker} (\rho^Y_U : z_d (Y, \bullet) \to z_d (U, \bullet))$ for a nonempty open $U \subset Y$.
\end{enumerate}

 Suppose $1 \leq i \leq n$. Let ${Z} \in A_n$ be a cycle such that $\partial_i ^{\epsilon} ({Z})$ is zero in $A_{n-1}$ for all $1 \leq i \leq n$ and $\epsilon \in \{ 0, 1 \}$. 

For a general $k$-rational point $c= (c_1, \ldots, c_n) \in \square_{\psi}^n \setminus \{ \mbox{faces} \}$ such that $(H_{(\ell), c, i} ^{n+1})^* ({Z}) \in A_{n+1}$ and $(H_{(\ell), c/j, i-1} ^n)^* (\partial_j ^{\epsilon} ({Z}))  \in A_n$ for all $1 \leq j \leq n$, $\epsilon \in \{ 0, 1 \}$ and $\ell \in \{ 0, 1 \}$ (which is possible by Lemma \ref{lem:Hell adm}), define
$$
\phi_{c,i} ({Z}):= (-1)^{n+1} \left( (H_{(0), c, i}^{n+1}) ^* ({Z}) - (H_{(1), c,i}^{n+1}) ^* ({Z})\right) \in A_{n+1}. 
$$

Then we have 
$$
\partial \phi_{c,i} ({Z}) = {Z} - \delta_{c_i, i} ({Z}) \mbox{ in } A_n.
$$
\end{prop}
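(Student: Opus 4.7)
The plan is to prove the identity $\partial \phi_{c,i}(Z) = Z - \delta_{c_i, i}(Z)$ by a direct expansion, using the boundary formula $\partial = \sum_{j=1}^{n+1}(-1)^j(\partial_j^1 - \partial_j^0)$ (in the $\square_{\psi}$-convention with faces $\{0,1\}$) together with the explicit face computations in Lemma \ref{lem:bidivision homotopy}. The key simplification is that $(\iota_j^\epsilon)^*(H_{(\ell), c, i}^{n+1})^* = (H_{(\ell), c, i}^{n+1}\circ \iota_j^\epsilon)^*$, so the entire argument reduces to case-by-case evaluation of the composites $H_{(\ell), c, i}^{n+1}\circ \iota_j^\epsilon$ already tabulated in parts (1)--(5) of that lemma.

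Then I would run through the four cases. For $1 \leq j \leq n$ with $j \neq i$, part (1) factors $H_{(\ell), c, i}^{n+1}\circ \iota_j^\epsilon$ as $\iota_j^\epsilon \circ H_{(\ell), c/j, i'}^n$ (with $i' = i-1$ if $j < i$ and $i' = i$ if $j > i$), so its pull-back on $Z$ equals $(H_{(\ell), c/j, i'}^n)^*(\partial_j^\epsilon Z)$; this vanishes in $A_n$ because $\partial_j^\epsilon Z = 0$ in $A_{n-1}$ by hypothesis (in the cokernel case the representative is in the image of $\rho^Y_U$, and flat pull-back preserves this image). For $j = i$ with $\epsilon = 0$, part (2) factors the composite as $\iota_i^\ell$ postcomposed with the projection $p: \square_{\psi}^n \to \square_{\psi}^{n-1}$ forgetting the last coordinate, so its pull-back on $Z$ is $p^*(\partial_i^\ell Z) = 0$. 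For $j = i$ with $\epsilon = 1$, part (3) gives $H_{(0), c, i}^{n+1}\circ \iota_i^1 = H_{(1), c, i}^{n+1}\circ \iota_i^1$, so the $\ell = 0$ and $\ell = 1$ contributions cancel inside the difference $(H_{(0)})^*Z - (H_{(1)})^*Z$.

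The only surviving contribution is the $j = n+1$ term. By parts (4) and (5), the four composites $H_{(\ell), c, i}^{n+1}\circ \iota_{n+1}^\epsilon$ evaluate respectively to $\sigma_{c_i, i}$, $\mathrm{Id}$, $\tau_i \circ \sigma_{1-c_i, i}$, and a map factoring through the projection forgetting $y_i$; the pull-back of $Z$ along the last one gives $\partial_i^1 Z = 0$. The prefactor $(-1)^{n+1}$ in the definition of $\phi_{c,i}$ cancels the sign $(-1)^{n+1}$ attached to $j = n+1$ in $\partial$, so one obtains
$$\partial\phi_{c,i}(Z) = (\partial_{n+1}^1 - \partial_{n+1}^0)\bigl[(H_{(0)})^* Z - (H_{(1)})^* Z\bigr] = Z - \bigl(\sigma_{c_i, i}^* Z - \sigma_{1-c_i, i}^* \tau_i^* Z\bigr) = Z - \delta_{c_i, i}(Z),$$
which is the desired identity in $A_n$.

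The main obstacle is not geometric but bookkeeping: one must match signs carefully across the four cases and verify that every pull-back descends from the ambient cycle group to $A_\bullet$. For $A_\bullet = \mathcal{C}_\bullet$, this descent is automatic because $\sigma$, $\tau$, $\mu$, and hence $\eta_c^\ell$ and $H_{(\ell), c, i}^{n+1}$, are morphisms of the cubes themselves and commute with the flat pull-back $\rho^Y_U$; admissibility of all relevant pull-backs for a generic $k$-rational $c$ is precisely Lemma \ref{lem:Hell adm}, so no new geometric input is required.
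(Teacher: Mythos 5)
Your proposal is correct and follows essentially the same route as the paper: expand $\partial\phi_{c,i}(Z)$ face by face, use Lemma \ref{lem:bidivision homotopy}(1)--(3) together with the normalization hypothesis and degeneracy to kill all terms with $1\le j\le n$, and read off the surviving $j=n+1$ contributions from parts (4)--(5), with the prefactor $(-1)^{n+1}$ cancelling the sign of the last boundary term. The only cosmetic difference is that the paper treats $n=1$ in a separate step by computing directly with $\eta_c^{\ell}$, whereas your argument runs uniformly for all $n\ge 1$; the content is identical.
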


\begin{proof}


\textbf{Step 1:} Suppose $n=1$. 

Here $H_{(\ell), c, 1} ^2: \square_{\psi}^2 \to \square_{\psi}^1$ is given by $\eta_c ^{\ell}$ of Definition \ref{defn:eta} for $\ell=0,1$. We inspect the codimension $1$ faces.

\medskip

\emph{Case 1-1.} For $\iota_1 ^0 : \square_{\psi} ^1 \hookrightarrow \square_{\psi}^2$, $y_2 \mapsto (0, y_2)$, we have 
$$
H_{(\ell), c, 1} ^2 \circ \iota_1 ^0  (y_2) = \eta_c ^{\ell} (0, y_2) =\ell
$$
by Lemma \ref{lem:eta bdry}. 
The composite $H_{(\ell), c, 1} ^2 \circ \iota_1 ^0 $ is also equal to the composite
$$
\square_{\psi}^1 \overset{pr}{\to} \square_{\psi}^0 \overset{\iota_1 ^{\ell}}{ \hookrightarrow } \square_{\psi} ^1,
$$
which is the projection followed by the codimension $1$ face map. Hence $\partial_1 ^0 ( H_{(\ell), c, 1}^2)^* ({Z})$ is the pull-back of the face, $pr^* (\partial_1 ^{\ell} ({Z}))$. Since this is a degenerate cycle, we have $\partial_1 ^0 ( H_{(\ell), c, 1}^2)^* ({Z})=0$ in $A_1$.

\medskip

\emph{Case 1-2.} For $\iota_1 ^1: \square_{\psi} ^1 \hookrightarrow \square_{\psi} ^2$, $y_2 \mapsto (1, y_2)$, for $\ell = 0, 1$, we have
$$
H_{(\ell), c, 1} ^2 \circ \iota_1 ^1 (y_2)= \eta_c ^{\ell} (1, y_2) =  1 - (1-c) (1-y_2),
$$
by Lemma \ref{lem:eta bdry}, which is equal to 
$$
\tau_1 \circ \sigma_{1-c, 1} \circ \tau_1: \square_{\psi} ^1 \to \square_{\psi}^1.
$$
Thus for a general $c$, the cycles $\partial_1 ^1 ( H_{(\ell), c, 1} ^2)^* ({Z})$ are in $A_1$, and 
$$
\partial_1 ^1 (H_{(0), c, 1} ^2)^* ({Z}) = \partial_1 ^1 (H_{(1), c, 1} ^2)^* ({Z}).
$$

\medskip

\emph{Case 1-3.} For $\iota_2 ^0: \square_{\psi} ^1\hookrightarrow \square_{\psi} ^2$, $y_1 \mapsto (y_1, 0)$, we have
$$
H_{(\ell), c, 1} ^2 \circ \iota_2 ^0 (y_1) = \eta_c ^{\ell} (y_1, 0) = \tuborg c y_1, & \mbox{ if } \ell = 0, \\
1 - (1-c) y_1, & \mbox{ if } \ell =1,\sluttuborg
$$
by Lemma \ref{lem:eta bdry}. Here, the first is equal to $\sigma_{c,1}$ while the second is equal to $\tau_1 \circ \sigma_{1-c, 1}$. Thus the cycle $\partial_2 ^0 ( H_{(\ell), c, 1} ^2)^* ({Z})$ is $\sigma_{c,1} ^* ({Z})$ for $\ell =0$ and $\sigma_{1-c, 1} ^* \tau_1 ^* ({Z})$ for $\ell = 1$.

\medskip

\emph{Case 1-4.} For $\iota_2 ^1: \square_{\psi} ^1 \hookrightarrow \square_{\psi}^2$, $y_1 \mapsto (y_1, 1)$, we have
$$
H_{(\ell), c,1} ^2 \circ \iota_2 ^1 (y_1) = \eta_c ^{\ell} (y_1, 1) = \tuborg y_1, & \mbox{ if } \ell = 0, \\
1, & \mbox{ if } \ell = 1,\sluttuborg
$$
by Lemma \ref{lem:eta bdry}.

When $\ell=0$, we have $\partial_2 ^1 ( H_{(0), c, 1} ^2)^* ({Z}) = {Z}$, which is in $A_1$ already. 

When $\ell = 1$, the morphism is equal to the composite
$$
\square_{\psi} ^1 \overset{pr}{\to} \square_{\psi} ^0 \overset{\iota_1 ^1}{\hookrightarrow} \square_{\psi} ^1,
$$
which is the projection followed by the codimension $1$ face map. Hence $\partial_2 ^1 ( H_{(1), c, 1} ^2)^* ({Z})$ is the pull-back of the face, $pr^* (\partial_1 ^1 ({Z}))$. Since this is a degenerate cycle, we have $\partial_2 ^1 ( H_{(1), c, 1} ^2)^* ({Z}) = 0$ in $A_1$.

\medskip

The above codimension $1$ face calculations also show that (only Case 1-3 and Case 1-4 have the non-cancelling surviving terms)
$$
\partial \phi_{c, 1} ({Z}) = {Z} - (\sigma_{c,1} ^* ({Z}) - \sigma_{1-c, 1}^* \tau_1^* ({Z})) = {Z} - \delta_{c,1} ({Z}).$$
This proves the proposition for $n=1$.

\bigskip

\textbf{Step 2:} Now let $n \geq 1$. 

 For the following Case 2-1 $\sim$ Case 2-3, let $1 \leq i, j \leq n$ and $\epsilon \in \{0, 1 \}$. The Cases 2-4, 2-5 deal with $j=n+1$.

\medskip

\emph{Case 2-1.} Suppose $j \not = i$. The identity $H_{(\ell), c, i} ^{n+1} \circ \iota_j ^{\epsilon} = \iota_j ^{\epsilon} \circ H_{(\ell), c/j, i-1}^{n}$ for $j<i$ in Lemma \ref{lem:bidivision homotopy}-(1) reads 
\begin{equation}\label{eqn:cdc00}
\partial_j ^{\epsilon} (H _{(\ell), c, i} ^{n+1})^* ({Z}) = (H_{(\ell), c/j, i-1} ^n)^* (\partial_j ^{\epsilon} ({Z})).
\end{equation}

We are given that $\partial_j ^{\epsilon} ({Z})$ is trivial in $A_{n-1}$ for all $1 \leq j \leq n$ and $\epsilon \in \{ 0, 1 \}$. Thus by \eqref{eqn:cdc00}, we have $\partial_j ^{\epsilon} ( H_{(\ell), c, i} ^{n+1})^* ({Z})$ is trivial as well. 

Similarly for $j>i$, using the second part of Lemma \ref{lem:bidivision homotopy}-(1), we deduce that $\partial_j ^{\epsilon} (H _{(\ell), c, i} ^{n+1})^* ({Z}) = (H_{(\ell), c/j, i-1} ^n)^* (\partial_j ^{\epsilon} ({Z}))$ is trivial from that $\partial_j ^{\epsilon} (Z)$ is trivial in $A_{n-1}$.

\medskip

\emph{Case 2-2.} Suppose $j=i$ and $\epsilon = 0$. By Lemma \ref{lem:bidivision homotopy}-(2), we know that $H_{(\ell), c, i} ^{n+1} \circ \iota_i ^{0}$ is equal to
$$\square_{\psi}^{n} \to \square_{\psi}^{n}, \ \ (y_1, \ldots, y_{n})\mapsto (y_1, \ldots, y_{i-1}, \ell, y_i, \ldots, y_{n-1}).$$
 It can also be written as the composite
$$ 
\square_{\psi}^n \overset{pr_n}{\to} \square_{\psi} ^{n-1} \overset{\iota_i ^{\ell}}{\hookrightarrow } \square_{\psi} ^n
$$
of the projection $pr_n$ that ignores $y_n$ and the face map $\iota_i ^{\ell}$. Hence $\partial_i ^0 (H_{(\ell), c, i}^{n+1})^* ({Z})$ is equal to the pull-back by the projection, $pr_n ^* ( \partial_i ^{\ell} ({Z})) = (\partial_i ^{\ell} ({Z})) \times \square_{\psi}^1$. Since this is a degenerate cycle, we have $\partial_i ^0 (H_{(\ell), c, i}^{n+1})^* ({Z}) = 0$ in $A_{n}$. 
In particular, $\partial_i ^{0} \phi_{c, i} ({Z}) = 0$. 

\medskip

\emph{Case 2-3.} Suppose $j=i$ and $\epsilon = 1$. By \eqref{eqn:ulala1} in the proof of Lemma \ref{lem:bidivision homotopy}-(3), we have
$$
H_{(\ell), c, i} ^{n+1} \circ \iota_i = (m_1, \ldots, m_n) = (y_1, \ldots, y_{i-1}, \eta_{c_i} ^{\ell} (1, y_n), y_i, \ldots, y_{n-1}),
$$
with $\eta_{c_i} ^{\ell} (1, y_n) = 1- (1-c_i) (1- y_n)$. This is equal to the composite
$$
\square_{\psi} ^n \overset{\tau_n}{\to} \square_{\psi}^n \overset{\tau}{\to} \square_{\psi}^n \overset{ \sigma_{1- c_i, i}}{\to} \square_{\psi}^n \overset{\tau_i}{\to} \square_{\psi}^n,$$
where the map $\tau$ is the permutation $(y_1, \ldots, y_n)\mapsto (y_1, \ldots, y_{i-1}, y_n, y_i, \ldots, y_{n-1})$. 
Furthermore, by Lemma \ref{lem:bidivision homotopy}-(3), we have $\partial_i ^1  (H_{(0), c, i}^{n+1})^* ({Z}) =\partial_i ^1  (H_{(1), c, i}^{n+1})^* ({Z}) $, so that $\partial_i ^1 \phi_{c, i} ({Z}) = 0$. 

\medskip

\emph{Cases 2-4, 2-5.} It remains to inspect $\partial_{n+1} ^{\epsilon} ( H_{(\ell), c, i} ^{n+1})^* ({Z})$ and $\partial_{n+1} ^{\epsilon} \phi_{c,i} ({Z})$ for $\epsilon=0, 1$.

The calculations of Lemma \ref{lem:bidivision homotopy}-(4),(5) show that 
\begin{equation}\label{eqn:case145star}
 \tuborg \partial _{n+1} ^0 (H_{(0), c, i} ^{n+1} )^* ({Z}) = \sigma_{c_i, i} ^* ({Z}), \ \ \  \partial _{n+1} ^1 (H_{(0), c, i} ^{n+1} )^* ({Z})= {Z},\\
\partial _{n+1} ^0 (H_{(1), c, i} ^{n+1} )^* ({Z})= \sigma_{1-c_i, i} ^* \tau_i ^* ({Z}),  \ \ \ \partial _{n+1} ^1 (H_{(1), c, i} ^{n+1} )^* ({Z})=0.\sluttuborg
\end{equation}
Here, the last equality $\partial _{n+1} ^1 (H_{(1), c, i} ^{n+1} )^* ({Z})=0$ comes from that $\partial _{n+1} ^1 (H_{(1), c, i} ^{n+1} )^* ({Z})$ is a degenerate cycle; this is because, by Lemma \ref{lem:bidivision homotopy}-(5), the composite $H_{(1), c, i} ^{n+1} \circ \iota_{n+1} ^1$ can be rewritten as the composite
$$
\square_{\psi} ^n \overset{pr_i}{\to} \square_{\psi} ^{n-1} \overset{\iota_i ^1}{\hookrightarrow} \square_{\psi} ^n
$$
of the projection $pr_i$ that drops $y_i$ and the face map $\iota_i ^1$, so the cycle is a permutation of the degenerate cycle $\partial_i ^1 ({Z}) \times \square_{\psi}^1$, thus also degenerate.

\medskip

Combining the above codimension $1$ face calculations, we have:
\begin{eqnarray*}
& & \partial \phi_{c, i} ({Z})= (-1)^{n+1} ( \partial _{n+1} ^1 - \partial _{n+1} ^0)  \phi_{c,i} ({Z})\\
& = & (-1)^{n+1} ( \partial _{n+1} ^1 - \partial _{n+1} ^0) \cdot (-1)^{n+1} ( (H_{(0), c, i} ^{n+1})^* ({Z} )- ( H_{(1), c, i} ^{n+1})^* ({Z}))\\
&=& {Z} - (\sigma_{c,i} ^* ({Z}) - \sigma_{1-c_i, i}^* \tau_ i ^* ({Z}) ) = {Z} - \delta_{c_i, i} ({Z}).
\end{eqnarray*}
This proves the proposition for $n \geq 2$.
\end{proof}

We need the following to apply the bi-division operations successively: 

\begin{lem}\label{lem:0 face}
Let $Y$ be a $k$-scheme of finite type.
Let $n \geq 1$ be an integer. Let $A_{\bullet}$ be one of the complexes
\begin{enumerate}
\item [1)] $z_d (Y, \bullet)$,
\item [2)] $  \mathcal{C}_{\bullet}= {\rm coker} (\rho^Y_U : z_d (Y, \bullet) \to z_d (U, \bullet))$ for a nonempty open $U \subset Y$.
\end{enumerate}

 Let ${Z} \in A_n$ be a cycle such that $\partial_i ^{\epsilon} ({Z})$ is trivial in $A_{n-1}$ for all $1 \leq i \leq n$ and $\epsilon \in \{ 0, 1 \}$. 

 Then for any general $k$-rational point $c= (c_1, \ldots, c_n) \in \square^n_{\psi} \setminus \{ \mbox{\rm faces}\}$, the same property holds for $\delta_{c_i,i} ({Z})$.
\end{lem}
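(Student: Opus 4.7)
The plan is to expand $\delta_{c_i,i}(Z) = \sigma_{c_i,i}^*(Z) - \sigma_{1-c_i,i}^*\tau_i^*(Z)$ and verify that each codimension $1$ face $\partial_j^\epsilon \delta_{c_i,i}(Z)$ is trivial in $A_{n-1}$, splitting into the cases $j \neq i$ and $j = i$. In both cases the verification reduces to a direct computation of the composites $\sigma_{*,i}\circ \iota_j^\epsilon$ and $\tau_i \circ \sigma_{*,i}\circ \iota_j^\epsilon$ at the level of cube morphisms, followed by an appeal to the hypothesis that $\partial_j^\epsilon (Z)$ is trivial in $A_{n-1}$ for every $j$ and $\epsilon$. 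Generic choice of $c$ intervenes only to ensure admissibility of the various intermediate pull-backs via Lemma \ref{lem:scaling adm}.

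For $j \neq i$, the morphisms $\sigma_{c_i,i}$ and $\tau_i$ act only on the $i$-th coordinate, so they commute with the face inclusion $\iota_j^\epsilon$, up to the obvious reindexing (if $j < i$ the scaled coordinate becomes the $(i-1)$-st one after taking the face). Hence $\partial_j^\epsilon \sigma_{c_i,i}^*(Z)$ is a scaling pull-back of $\partial_j^\epsilon (Z)$, and similarly for the other summand. Since $\partial_j^\epsilon(Z)$ is trivial in $A_{n-1}$ by hypothesis---and since, in case 2), $\sigma^*$ and $\tau^*$ commute with the restriction $\rho^Y_U$ and therefore preserve its image---both summands are trivial, provided the relevant pull-backs are admissible, which holds generically in $c_i$.

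For $j = i$, the required composites are immediate. When $\epsilon = 0$, one has $\sigma_{c_i,i}\circ \iota_i^0 = \iota_i^0$ (because $c_i \cdot 0 = 0$) and $\tau_i \circ \sigma_{1-c_i,i}\circ \iota_i^0 = \iota_i^1$ (because $1 - (1-c_i)\cdot 0 = 1$), so $\partial_i^0 \delta_{c_i,i}(Z) = \partial_i^0(Z) - \partial_i^1(Z)$, which is trivial by hypothesis. When $\epsilon = 1$, both $\sigma_{c_i,i}\circ \iota_i^1$ and $\tau_i \circ \sigma_{1-c_i,i}\circ \iota_i^1$ are the same slicing map, sending $(y_1,\dots,\widehat{y_i},\dots,y_n)$ to $(y_1,\dots,y_{i-1},c_i,y_{i+1},\dots,y_n)$; the two summands of $\delta_{c_i,i}(Z)$ then cancel, giving $\partial_i^1 \delta_{c_i,i}(Z) = 0$.

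The whole argument is essentially bookkeeping, and there is no real obstacle. The only mildly delicate point is that in case 2) one should check that ``triviality'' in $\mathcal{C}_\bullet$, i.e.\ lying in the image of $\rho^Y_U$, is preserved by $\sigma^*$ and $\tau^*$; this follows from the compatibility of flat pull-back with the base change $U \hookrightarrow Y$, together with Lemma \ref{lem:scaling adm} applied both over $Y$ (to a chosen lift of each face of $Z$) and over $U$ (to $Z$ itself), so a single generic choice of $c$ handles both at once.
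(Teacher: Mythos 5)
Your proposal is correct and follows essentially the same route as the paper's proof: expand $\delta_{c_i,i}(Z)$, check the composite identities $\sigma_{c_i,i}\circ\iota_j^\epsilon$ and $\tau_i\circ\sigma_{1-c_i,i}\circ\iota_j^\epsilon$ case by case ($j\neq i$; $j=i$, $\epsilon=0$; $j=i$, $\epsilon=1$), and conclude from the vanishing hypothesis on the faces of $Z$. Your extra remarks on why ``triviality'' in $\mathcal{C}_\bullet$ is preserved (compatibility of $\sigma^*,\tau^*$ with $\rho^Y_U$ and genericity of $c$ applied also to lifts over $Y$) only make explicit what the paper leaves implicit.
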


\begin{proof}
We show that all codimension $1$ faces of $\delta_{c_i,i} ({Z}) = \sigma_{c_i, i} ^* {Z} -  \sigma_{1-c_i, i} ^* \tau_i ^* {Z}$ are trivial. We again use composites of morphisms. 

\medskip

When $j \not = i$, first consider the case $i<j$. Then we have
$$
\sigma_{c_i, i} \circ \iota_j ^\epsilon = \iota_j ^\epsilon \sigma_{c_i, i}, \ \ \tau_i \circ \sigma_{1-c_i, i} \circ \iota_j ^{\epsilon} = \iota_j ^{\epsilon} \circ \tau_i \circ \sigma_{1-c_i, i}. 
$$
Hence $\partial_j ^{\epsilon} \delta_{c_i, i} ({Z}) = \delta_{c_i, i} (\partial_j ^{\epsilon} ({Z})) = \delta_{c_i,i} (0) = 0$. When $i>j$, we have
$$
\sigma_{c_i, i} \circ \iota_j ^{\epsilon} = \iota_j ^{\epsilon} \circ \sigma_{c_i, i-1}, \ \ \tau_i \circ \sigma_{1-c_i, i} \circ \iota_j ^{\epsilon} = \iota_j ^{\epsilon} \circ \tau_{i-1} \circ \sigma_{1- c_i, i-1}$$
so that we deduce that
\begin{eqnarray*}
\partial_j ^{\epsilon} \delta_{c_i, i} (Z) & =& \partial_j ^{\epsilon} ( \sigma_{c_i, i} ^* (Z) - \sigma_{1-c_i, i} ^* \tau_i ^* (Z))= (\sigma_{c_i, i-1} ^*- \sigma_{1-c_i, i-1} ^* \tau_{i-1}^* ) (\partial_j ^{\epsilon} (Z))\\
&= & (\sigma_{c_i, i-1} ^*- \sigma_{1-c_i, i-1} ^* \tau_{i-1}^* )  (0) = 0.
\end{eqnarray*}

\medskip

When $j=i$ and $\epsilon = 0$, we have
$$
\sigma_{c_i, i} \circ \iota_i ^0 = \iota_i ^0, \ \ \tau_i \circ \sigma_{1- c_i, i} \circ \iota_i ^0 = \iota_i ^1.
$$
Hence $\partial_i ^0 \delta_{c_i,i} ({Z}) = \partial_i ^0 ({Z}) - \partial_i ^1 ({Z}) = 0$.

\medskip

When $j=i$ and $\epsilon=1$, we have
$$
\sigma_{c_i, i} \circ \iota_i ^1 = \tau_i \circ \sigma_{1-c_i, i} \circ \iota_i ^1.
$$
Hence $\partial_i ^1 \sigma_{c_i, i} ^* {Z} = \partial_i ^1 \sigma_{1-c_i, i} ^* \tau_i ^* {Z}$, i.e. $\partial_i ^1 \delta_{c_i,i} ({Z}) = 0$. 

Thus we have checked that all codimension $1$ faces of $\delta_{c_i, i} ({Z})$ are trivial.
\end{proof}

\subsubsection{Cubical subdivision}\label{sec:cube subdiv 0}
By iterating the bi-division operations of \S \ref{sec:bi-division} and \S \ref{sec:homo bidivision}, we can define the cubical subdivisions:

\begin{defn}\label{defn:cub sd}
Let $Y$ be a $k$-scheme of finite type.
Let ${Z} \in z_d ({Y}, n)$. For a general $k$-rational point $c= (c_1, \ldots, c_n) \in \square_{\psi} ^n \setminus \{ \mbox{faces} \}$, define
\begin{equation}\label{eqn:cub sd bidiv}
{\rm sd}_c ({Z}):= \delta_{c_n, n} \cdots \delta_{c_1, 1} ({Z}),
\end{equation}
which we call the \emph{cubical subdivision} ${\rm sd}_c ({Z})$ with respect to $c$.
\qed
\end{defn}

We note that:

\begin{lem}\label{lem:M=0 adm}
Let $Y$ be a $k$-scheme of finite type. Let ${Z}\in z_d ({Y}, n)$. 
 
 Then for a general $k$-rational point $c \in \square_{\psi} ^n \setminus \{ \mbox{\rm faces} \}$, we have ${\rm sd}_c ({Z}) \in z_d ({Y}, n)$. 
 \end{lem}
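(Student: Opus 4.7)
The plan is to argue by induction on $n$, using the iterative definition \eqref{eqn:cub sd bidiv} of ${\rm sd}_c(Z)$ as an $n$-fold composite of bi-division operations, where at each stage admissibility will be preserved by invoking Corollary \ref{lem:bidivision adm}. Concretely, set $Z_0 := Z \in z_d(Y, n)$ and, for $1 \leq j \leq n$, put
$$
Z_j := \delta_{c_j, j}(Z_{j-1}) = \sigma_{c_j, j}^*(Z_{j-1}) - \sigma_{1-c_j, j}^* \tau_j^*(Z_{j-1}),
$$
so that $Z_n = {\rm sd}_c(Z)$. It suffices to show inductively that for a general choice of $c_j$, depending on $c_1, \ldots, c_{j-1}$, one has $Z_j \in z_d(Y, n)$.

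For the base step $j = 1$, apply Corollary \ref{lem:bidivision adm} to $Z_0$ and the coordinate index $i = 1$: there is a nonempty Zariski open $U_1 \subset \square_{\psi}^1 \setminus \{0, 1\}$ such that $Z_1 = \delta_{c_1, 1}(Z_0) \in z_d(Y, n)$ for every $c_1 \in U_1(k)$. For the inductive step, assume $c_1, \ldots, c_{j-1}$ have been chosen so that $Z_{j-1} \in z_d(Y, n)$. Then $Z_{j-1}$ is a finite $\mathbb{Z}$-linear combination of integral admissible cycles, and applying Corollary \ref{lem:bidivision adm} to $Z_{j-1}$ with coordinate index $i = j$ produces a nonempty open $U_j = U_j(c_1, \ldots, c_{j-1}) \subset \square_{\psi}^1 \setminus \{0, 1\}$ such that $Z_j \in z_d(Y, n)$ for every $c_j \in U_j(k)$.

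Since $k$ is infinite (standing assumption in \S \ref{sec:cubical subdivision}), at each stage $U_j(k)$ is nonempty, so we can successively find $c_1, \ldots, c_n$ with the desired property. Packaging the construction together, the set of $(c_1, \ldots, c_n) \in \square_{\psi}^n \setminus \{\text{faces}\}$ for which the iteration succeeds contains the set carved out by an inductively defined system of dense open conditions, and this shows that ${\rm sd}_c(Z) \in z_d(Y, n)$ for a general $k$-rational point $c$.

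The only point requiring care is the dependence of $U_j$ on the previous choices $c_1, \ldots, c_{j-1}$: each application of Corollary \ref{lem:bidivision adm} avoids finitely many bad values of $c_j$ per integral component of $Z_{j-1}$, but this does not affect the argument since the resulting condition is still a nonempty open condition on $c_j$ over the infinite field $k$. I do not expect any genuine obstacle here; the lemma is essentially a formal consequence of Corollary \ref{lem:bidivision adm} iterated $n$ times.
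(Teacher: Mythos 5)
Your proposal is correct and is essentially the paper's own argument: the paper proves the lemma by repeatedly applying Lemma \ref{lem:scaling adm} (equivalently Corollary \ref{lem:bidivision adm}) to the successive bi-divisions $\delta_{c_j,j}$, exactly as in your induction, and your handling of the dependence of the general choice of $c_j$ on $c_1,\dots,c_{j-1}$ via the infinitude of $k$ is the right way to make ``general $c$'' precise. The paper additionally cites Lemma \ref{lem:0 face}, but that concerns preservation of the vanishing of codimension~$1$ faces and is not needed for the bare admissibility statement, so its omission is not a gap.
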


\begin{proof}
Given Definitions \ref{defn:bidivision delta} and \ref{defn:cub sd}, this follows immediately by repeatedly applying Lemma \ref{lem:scaling adm}, with the help of Lemma \ref{lem:0 face}.
\end{proof}

\begin{remk}\label{remk:cub sd bidiv 2}
The cubical subdivision ${\rm sd}_c (Z)$ in \eqref{eqn:cub sd bidiv} also admits a description of the form
\begin{equation}\label{eqn:cub sd bidiv 2}
{\rm sd}_c ({Z})= \sum_{v}  \varepsilon (v) \iota_v ^* \pi_v ^* ({Z})
\end{equation}
for the sign $\varepsilon (v)$ of \eqref{eqn:sgn_orientation}, where the sum is taken over all vertices of $\square_{\psi}^n$. Explicitly, $\pi_v$ and $\iota_v$ are defined as follows. 

Let $n \geq 1$ and let $v \in \square^n _{\psi} $ be a vertex. Let $(y_1, \ldots, y_n )\in \square_{\psi} ^n$ be the coordinates. Define the affine isomorphism $\pi_v: \square_{\psi} ^n \to \square^n_{\psi}$
\begin{equation}\label{eqn:pi 1}
y_i \mapsto \tuborg y_i,  &\mbox{ if } v_i = 0, \\ 1- y_i, & \mbox{ if } v_i = 1.\sluttuborg
\end{equation}

For a $k$-rational point $c= (c_1, \ldots, c_n) \in \square^n _{\psi}\setminus \{ \mbox{faces} \}$, define $\iota_v: \square_{\psi} ^n \to \square_{\psi}^n$ by
\begin{equation}\label{eqn:iota 1}
y_i \mapsto \tuborg c_i y_i, & \mbox{ if } v_i = 0, \\
(1-c_i)y_i, & \mbox{ if } v_i = 1.\sluttuborg
\end{equation}
Here, the map $\iota_v$ depends on the choice of $c$.

So, the composite $\pi_v \circ \iota_v: \square_{\psi} ^n \to \square_{\psi} ^n$ is explicitly
\begin{equation}\label{eqn:pi iota}
y_i \mapsto \tuborg c_i y_i, & \mbox{ if } v_i= 0,\\ 
1- (1-c_i)y_i, & \mbox{ if } v_i = 1.\sluttuborg
\end{equation}
Note that $\pi_v$ is the composite of the involutions $\tau_j$ for $ j \in B(v)$, and $\iota_v$ is a composite of the scaling morphisms in Definition \ref{defn:involution}. (Recall that when $v= (\epsilon_1, \ldots, \epsilon_n)$ with $\epsilon_j \in \{ 0, 1 \}$, we defined $B(v) := \{ j \ | \ \epsilon_j = 1 \}$.)
\qed
\end{remk}

Let $n \geq 1$. Using the normalization theorems (Theorems \ref{thm:normalization0} and \ref{thm:normalization}), we may choose a representative cycle of a given class in $\CH^q ({Y}, n)$ (resp. ${\rm H}_n (\mathcal{C}_{\bullet})$), whose codimension $1$ faces $\partial_i ^{\epsilon}$ are all trivial in $z_d ({Y}, n-1)$ (resp. $\mathcal{C}_{n-1}$). For such normalized representative cycles, we can further replace them by their cubical subdivisions.

\begin{prop}\label{prop:cub sd}
Let $Y$ be a $k$-scheme of finite type.
Let $A_{\bullet}$ be one of the bounded above complexes
\begin{enumerate}
\item [1)] $z_d (Y, \bullet)$,
\item [2)] $  \mathcal{C}_{\bullet}= {\rm coker} (\rho^Y_U : z_d (Y, \bullet) \to z_d (U, \bullet))$ for a nonempty open $U \subset Y$.
\end{enumerate}

 Let $n \geq 1$. Let ${Z} \in A_n$ be a cycle such that $\partial_i ^{\epsilon} ({Z}) $ is trivial in $A_{n-1}$ for all $1 \leq i \leq n$ and $\epsilon = 0, 1$.

Then for a general $k$-rational point $c \in \square_{\psi}^n \setminus \{ \text{faces} \}$ such that ${\rm sd}_c ({Z}) \in A_n$, there exists a cycle $\phi_n ({Z}) \in A_{n+1}$ satisfying
$$
\partial \phi_n ({Z}) = {Z} - {\rm sd}_c ({Z}).
$$
Furthermore, $\partial_i ^{\epsilon} ({\rm sd}_c ({Z})) $ is trivial in $A_{n-1}$ for all $1 \leq i \leq n$ and $\epsilon = 0,1$, again.
\end{prop}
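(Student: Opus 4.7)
The plan is to iterate Proposition \ref{prop:cdc} exactly $n$ times along the definition of ${\rm sd}_c$. Set $Z_0 := Z$ and, inductively, $Z_i := \delta_{c_i, i}(Z_{i-1})$ for $1 \le i \le n$, so that $Z_n = {\rm sd}_c(Z)$ by Definition \ref{defn:cub sd}. My first step is to verify that each intermediate $Z_i$ is again normalized in $A_n$: this is exactly what Lemma \ref{lem:0 face} guarantees, applied once at each stage (the hypothesis on $Z_{i-1}$ is furnished by the previous stage, and the base case $i=1$ is the normalization hypothesis on $Z = Z_0$). Applied at $i = n$, this already proves the ``furthermore'' clause of the proposition, since $Z_n = {\rm sd}_c(Z)$.

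Second, because each $Z_{i-1}$ is normalized in $A_n$, I may invoke Proposition \ref{prop:cdc} at stage $i$ with the cycle $Z_{i-1}$ and the coordinate $c_i$ to obtain a homotopy cycle $\phi_{c,i}(Z_{i-1}) \in A_{n+1}$ with
$$
\partial \phi_{c,i}(Z_{i-1}) = Z_{i-1} - \delta_{c_i, i}(Z_{i-1}) = Z_{i-1} - Z_i.
$$
Summing over $i = 1, \dots, n$ telescopes to
$$
\partial \Bigl( \sum_{i=1}^n \phi_{c,i}(Z_{i-1}) \Bigr) = Z_0 - Z_n = Z - {\rm sd}_c(Z),
$$
so I set $\phi_n(Z) := \sum_{i=1}^n \phi_{c,i}(Z_{i-1}) \in A_{n+1}$, which is the desired cycle.

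The only technicality, and what I expect to be the main bookkeeping (rather than a genuine obstacle), concerns the choice of the single generic $k$-rational point $c = (c_1, \dots, c_n)$. Each invocation of Proposition \ref{prop:cdc} at stage $i$, together with each invocation of Lemma \ref{lem:0 face} at stage $i$, is valid for $c$ lying in a nonempty Zariski open subset of $\square_{\psi}^n \setminus \{\mathrm{faces}\}$; this Zariski open controls the admissibility of the pull-backs $(H_{(\ell), c, i}^{n+1})^*(Z_{i-1})$, of $(H_{(\ell), c/j, i-1}^n)^*(\partial_j^\epsilon(Z_{i-1}))$ for all $j, \epsilon$, and of the scaling pull-backs in $\delta_{c_i, i}(Z_{i-1})$. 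Since there are only finitely many such open conditions (one family per index $i$), their intersection remains a nonempty Zariski open in $\square_{\psi}^n \setminus \{\mathrm{faces}\}$, and hence contains a $k$-rational point because $k$ is infinite (the running assumption of \S \ref{sec:cubical subdivision}). Choosing $c$ in this common open set makes every $Z_i$ and every $\phi_{c,i}(Z_{i-1})$ simultaneously well-defined in the ambient groups $A_n$ and $A_{n+1}$, respectively, and the telescoping identity above completes the proof.
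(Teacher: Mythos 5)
Your proposal is correct and follows essentially the same route as the paper's own proof: iterate Proposition \ref{prop:cdc} along the bi-divisions $\delta_{c_i,i}$, using Lemma \ref{lem:0 face} at each stage to keep the intermediate cycles normalized (which also yields the ``furthermore'' clause), and take $\phi_n(Z)$ to be the telescoping sum $\phi_{c,1}(Z)+\phi_{c,2}(\delta_{c_1,1}(Z))+\cdots+\phi_{c,n}(\delta_{c_{n-1},n-1}\cdots\delta_{c_1,1}(Z))$. Your extra remark on intersecting the finitely many dense open conditions on $c$ (using that $k$ is infinite) is a harmless amplification of what the paper leaves implicit in the phrase ``general $k$-rational point.''
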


\begin{proof}
For a general $k$-rational point $c= (c_1,\ldots, c_n) \in \square_{\psi}^n \setminus \{ \mbox{faces} \}$, we have the equivalence ${Z} \equiv \delta_{c_1,1} ({Z})$ modulo the boundary of $ \phi_{c, 1}({Z})$ by Proposition \ref{prop:cdc}. The cycle $\delta_{c_1,1}({Z})$ has the trivial codimension $1$ faces in $A_{n-1}$ again by Lemma \ref{lem:0 face}. Inductively we may apply the bi-division process to get a sequence of equivalences of cycles modulo boundaries
$$
Z \equiv \delta_{c_1,1} ({Z}) \equiv \delta_{c_2,2} \delta_{c_1,1} ({Z}) \equiv \cdots \equiv \delta_{c_n,n} \cdots \delta_{c_1,1} ({Z}).
$$
By Definition \ref{defn:cub sd}, ${\rm sd}_c ({Z})= \delta_{c_n,n} \cdots \delta_{c_1,1} ({Z})$. This shows that ${Z}$ is equivalent to ${\rm sd}_c ({Z})$, modulo the boundary of
$$
\phi_n ({Z}) := \phi_{c, 1} ({Z}) + \phi_{c, 2} (\delta_{c_1, 1} ({Z})) + \cdots + \phi_{c,n} ( \delta_{c_{n-1}, n-1} \cdots \delta_{c_1, 1} ({Z})).
$$
This proves the proposition.
\end{proof}

\subsection{Blow-up of faces of a cube}\label{sec:moving by blow-up}
In \S \ref{sec:moving by blow-up}, we recall a few terminologies and results related to the techniques of ``blowing-up of faces of a cube" from S. Bloch \cite{Bloch moving} and M. Levine \cite{Levine JPAA}, \cite{Levine moving}. 

There is nothing original in \S \ref{sec:moving by blow-up}, and this is included to explain in a self-contained manner the notations and terminologies used in the essential result, Theorem \ref{thm:bloch-levine}, taken from \cite{Bloch moving}, \cite{Levine JPAA}, \cite{Levine moving}. 

We continue to suppose that $k$ is an infinite field until the end of \S \ref{sec:moving by blow-up}. 

\subsubsection{Blow-up of faces and coordinate neighborhoods}
The main references are S. Bloch \cite{Bloch moving} and M. Levine \cite{Levine moving}. We frequently visit \cite[\S 2 $\sim$ \S 4]{Levine moving}.

\begin{defn}[{\cite[(1.1)]{Bloch moving}}, {\cite[\S 3.1]{Levine moving}}]\label{defn:face blow}
Let $S$ be a smooth quasi-projective $k$-scheme. Let $D_1, \ldots, D_N$ be a set of smooth effective divisors on $S$ such that for each subset $I \subset \{ 1, \ldots, N\}$ the intersection $D_I:=  \bigcap_{i \in I} D_i$ is either transverse or empty. We write $\partial S$ for both the set of the divisors as well as the union of them, when no confusion arises.

For a subset $I \subset \{ 1 , \ldots, N \}$, let $\pi: T \to S$ be the blow-up of $S$ along $D_I$. For $1 \leq i \leq N$, let $D_i'$ be the strict transform of $D_i$ via $\pi$, while let $D_{N+1} '$ be the exceptional divisor of the blow-up. The set of the divisors $\{ D_1', \ldots, D_N', D_{N+1}' \}$ (or their union) is written as $\partial T$. The divisors in $\partial T$ enjoy the same transverse intersection property. We call them the \emph{distinguished divisors} on $T$.

We can repeat this process finitely many times, where each blow-up has the center given by the intersection of a subset of the distinguished divisors. We then obtain a scheme $U \to S$ with a set of smooth Cartier divisors $\delta_1, \ldots, \delta_N$, with the same transverse intersection property; we call them the \emph{distinguished divisors} on $U$, as well.

Intersections of distinguished divisors are called \emph{faces}, and a face of dimension $0$ is called a \emph{vertex}. A face of dimension $1$ is called an \emph{edge}.

Let $\mathcal{B}_S$ be the full subcategory of the category of schemes over $S$, whose objects are the varieties $U \to S$ with the distinguished divisors obtained as the above. 
 \qed
\end{defn}

\begin{defn}[{cf. \cite[(1.1)]{Bloch moving}}] 

Consider $(S, \partial S = \{ D_1, \ldots, D_N\})$ as before. Let $X$ be a $k$-scheme of finite type, and let $f: {X} \to S$ be a morphism of $k$-schemes. Suppose that $f({X}) \not\subset \bigcup_i D_i$. We say that $f$ (or ${X}$) \emph{intersects the faces properly} if for any face $\sigma \subset S$ of codimension $r$, $f^{-1} (\sigma)=   {X}  \times_S \sigma $ has codimension $\geq r$ in ${X}$. For any object $\pi: T \to S$ in $\mathcal{B}_S$, the \emph{strict transform} $f' : {X}' \to T$ of $f$ is defined to be the closure in ${X} \times _S T$ of $f^{-1} ( S \setminus \bigcup_i D_i)$. It is often denoted by $\pi^! {X}$. We can naturally extend it to formal finite sums of such morphisms ${X} \to S$. \qed
\end{defn}

\begin{defn}[{\cite[p.316, l.7--13]{Levine moving}}]
Let $(S, \partial S)$ be as before. Let $v \in S$ be a vertex. Let $D_{i_1}, \ldots, D_{i_n}$ be the divisors in $\partial S$ containing $v$. 

An $n$-tuple of regular functions $(f_1, \ldots, f_n)$ defined in a neighborhood $U$ of $v$ is called \emph{a coordinate system at $v$} (adapted to $\partial S$), if the map
$$
(f_1, \ldots, f_n): U \to \mathbb{A}^n
$$
is an open immersion, and each $D_{i_j} \cap U$ is given by $f_j = 0$ for $1 \leq j \leq n$.\qed
\end{defn}

\begin{defn}[{\cite[p.316, l.14]{Levine moving}}, cf. {\cite[(1.3)]{Bloch moving}}]\label{defn:coordinate}
Let $(S, \partial S)$ be as before. Suppose each vertex $w \in S$ has an open neighborhood $U_w \subset S$ with a coordinate system $f^w= (f_1 ^w, \ldots, f_n ^w)$ at $w$ on $U_w$. Call this coordinate system or any of its reordering, a \emph{distinguished coordinate system at $w$}. 

\medskip

Let $F \subset S$ be a face. 
Let $p: S' \to S$ be the blow-up of $S$ with the center $F$, and let $E \subset S'$ be its exceptional divisor. We obtain the distinguished divisors $\partial S'$ as in Definition \ref{defn:face blow}. 

Let $v \in S'$ be a vertex. Let $w:= p (v)$. We define a distinguished coordinate system at $v$ from that at $w$ as follows. 

If $w \not \in F$, then we take $U_v:= p^{-1} (U_w \setminus F)$ and define $f^v := p^* (f^w) = f^w \circ p$. 

If $w \in F$, then since $p$ is the blow-up of the face $F$, there exist a subset $J \subset \{ 1, \ldots, n \}$ and an index $i \in J$ such that $F \cap U_w$ is defined by $f_j ^w= 0$ for $j \in J$, and
$$
v = E \cap \bigcap _{ j \in J, j \not = i} [\{ f_j ^w = 0 \}],
$$
where $[ \{- \} ]$ denotes the strict transform of the locus $\{ - \}$. Let $U_v:= p^{-1} (U_w) \setminus  [ \{ f_i ^w = 0 \}]$, and define for $1 \leq j \leq n$
$$
f_j ^v := \tuborg p^* (f_j ^w) , & j \not \in J \setminus \{ i \}, \\
p^* ( f_j ^w / f_i ^w), & j \in J \setminus \{ i \}.\sluttuborg
$$
Here, we allow reordering of $f_j ^w$ over $j$.\qed
\end{defn}

\begin{exm}[{\cite[Example 3.3]{Levine moving}}] 
We are primarily interested in the case when $S= S_0:= (\mathbb{A}^1)^n$, with the $2n$ distinguished divisors given by $y_i = 0, 1$ for $1 \leq i \leq n$, where  $(y_1, \ldots, y_n) \in S_0$. We order them by 
$$
D_i := \tuborg
 \{ y_i = 0 \}, & \mbox{ for } i = 2k, 1 \leq k \leq n, \\
\{ y_i = 1 \}, & \mbox{ for } i = 2k-1, 1 \leq k \leq n.
\sluttuborg
$$
Each vertex $v$ of $S_0$ is of the form $ (\epsilon_1, \ldots, \epsilon_n)$ for some $\epsilon_i \in \{ 0, 1 \}$. We choose the following distinguished coordinate system $y^v := (y_1 ^v, \ldots, y_n ^v)$ near $v$ by
\begin{equation}\label{eqn:dist coord}
y_i ^v := \tuborg 
y_i, & \mbox{ if } \epsilon_i = 0, \\
1- y_i, & \mbox{ if } \epsilon _i = 1.
\sluttuborg
\end{equation}

When $S_M \to \cdots \to S_1 \to S_0$ is a tower of blow-ups of faces as in Definition \ref{defn:face blow}, for each vertex $v$ of each $S_j$, we define a distinguished coordinate system at $v$ as in Definition \ref{defn:coordinate} inductively from the above distinguished coordinates near the vertices of $S_0$.
\qed
\end{exm}

Fix an order on $\partial S_M= \{ D_1, \ldots, D_N\}$. Let $v \in S_M$ be a vertex and let $y^v= ( y_1 ^v, \ldots, y_n ^v)$ be a given coordinate system. The closure of the divisor $\{ y_i ^v = 0 \}$ is one of $\partial S_M$, call it $D_{j (i)}$. Following \cite[\S 4.3]{Levine moving}, we say that $y^v$ is ordered if $j(1) < j(2) < \cdots < j(n)$. This requirement determines an order on the coordinates $y^v$ uniquely by a chosen order on $\partial S_M$. We use this ordering, when we need one.

\begin{defn}[{\cite[Definition (1.3.3)]{Bloch moving}} or {\cite[Definition 4.4]{Levine moving}}]\label{defn:edge coordinate}

Every edge $\ell$ in $S_M$ contains exactly two vertices (see \cite[Lemma (1.3.2)]{Bloch moving} or \cite[Lemma 3.2]{Levine moving}), call them $v$ and $w$. We say $v$ and $w$ are adjacent vertices connected by the edge $\ell$.

Let $(t_1, \ldots, t_n)$ and $(u_1, \ldots, u_n)$ be coordinate systems around $v$ and $w$, respectively. The \emph{coordinate for $\ell$ at $v$} is the unique $t_p$ such that $t_p$ does not vanish on $\ell$. Similarly, let $u_q$ be the coordinate for $\ell$ at $w$. We define $g= g(v, w) \in \mathfrak{S}_n$ to be the unique permutation of $\{1, \ldots, n \}$ such that $g (p) = q$, and for $i \not = p$, the distinguished divisors defined locally by $\{t_i = 0\}$ and $\{u_{g(i)}=0\}$ coincide.\qed
\end{defn}

We have the sign $\varepsilon (v)$, called the orientation, for each vertex $v \in S_M$. See \cite[Definition 4.4]{Levine moving}. We do not recall its full definition, but we mention the following: firstly, when $v, w$ are adjacent vertices of $S_M$ connected by an edge $\ell$, then $\varepsilon (v) = - \sgn (g (v,w)) \varepsilon (w)$. See \cite[Lemma 4.5]{Levine moving} (cf. \cite[Lemma (3.1.2)]{Bloch moving}). Secondly, when $M=0$, this sign $\varepsilon (v)$ is identical to the sign \eqref{eqn:sgn_orientation} in \S \ref{sec:bi-division}. We use this in Lemma \ref{lem:sd^0 comparison} later.

\subsubsection{Complexes associated to schemes and subschemes}\label{sec:complex scheme}
In \S \ref{sec:complex scheme}, we recall from M. Levine \cite[\S 2.1, \S 2.2]{Levine moving}, the notion of complexes of schemes used for the rest of \S \ref{sec:moving by blow-up}. 

\medskip

For an additive category $\mathcal{A}$, recall that the DG-category $\mathbf{C} (\mathcal{A})$ has for its objects the complexes $A=A^{\bullet}$ of objects in $\mathcal{A}$, while for the Hom-complex between two objects $A$ and $B$, we let
$$
\Hom ((A, d_A), (B, d_B)):= (\Hom^{\bullet} ((A, d_A), (B, d_B)), d),
$$
where 
$$
\Hom^n ((A, d_A), (B, d_B)) = \{ f= (f^i: A^i \to B^{i+n}, i \in \mathbb{Z}) \},
$$
and for $f= (f^i) \in \Hom^n ((A, d_A), (B, d_B))$, we have
$$
df:= ( d_B ^{i+n} \circ f^i + (-1)^{n-1} f^{i+1} \circ d_A ^i: A^i \to B^{i+n+1}, i \in \mathbb{Z}).
$$
A map $f \in \Hom^n$ is called a morphism of complexes if $df =0$. 

\medskip

Here is one example of $\mathcal{A}$ from \cite[\S 2.2]{Levine moving}. 
Consider the additive category $\mathcal{A}=\mathbb{Z} \Sch_k$. Its objects are disjoint unions of connected $k$-schemes, and $\Hom_{\mathcal{A}} (X, Y)$ is the free abelian group on the $k$-morphisms between connected components of $X$ and $Y$, respectively. The empty scheme is $0$. We form the DG-category $\mathbf{C} (\mathbb{Z} \Sch_k )$ as the above. Instead of the above $\Sch_k$, we may also use $\Sch_k ^{\ess}$ obtained from $\Sch_k$ by adding localizations of the objects of $\Sch_k$ at finite subsets as objects.

\begin{exm}[\emph{ibid.}]\label{exm:Levine cat2}
Let $X \in \Sch_k$. Let $D_i \hookrightarrow X$ be closed immersions for $1 \leq i \leq N$. The complex $(X; D_1, \ldots, D_N)\in \mathbf{C}(\mathbb{Z}\Sch_k)$ is defined as follows. For each subset $I \subset \{ 1, \ldots, N \}$, let $D_I := \bigcap_{i \in I} D_i$, where for $I= \emptyset$, let $D_I:= X$.

First consider the $N$-dimensional complex, whose object at the multi-degree $J= (-j_1, \ldots, -j_N) \in \mathbb{Z}^N$ is 
$$
\tuborg D _{I (J)}, & \mbox{ if } \mbox{ all of $j_i$ is either $0$ or $1$, and } I(J) = \{ i \ | \ j_i = 1 \},\\
0, & \mbox{ otherwise,}\sluttuborg
$$
and all maps are the morphisms given by the induced closed immersions. For $j \in I$, the relevant morphism is denoted by $\iota_{I, j}: D_I \to D_{I\setminus \{ j \}}$. 

The complex $(X; D_1, \ldots, D_N)$ is the total complex of the above $N$-dimensional complex. For $I= (i_1< \cdots < i_r)$ and $i_j \in I$, the map for the complex is given by taking sums of $(-1)^{(j-1)} \iota_{I, i_j}: D_I \to D_{I \setminus \{i_j \}}.$

\medskip

For instance, when $N=2$, the $2$-dimensional complex appears as the left square below, with their respective bi-degrees written on the right:
$$
\xymatrix{
D_1 \ar[r] & X  & & & (-1, 0) & (0,0) \\
D_1 \cap D_2 \ar[u] \ar[r] & D_2, \ar[u] & & & (-1, -1) & (0, -1).}
$$
Hence $(X; D_1, D_2)$ is given by
$$
D_1 \cap D_2 \overset{ ( - \iota_{I, 2}, \iota_{I, 1} )}{ \longrightarrow} D_1 \coprod D_2 \longrightarrow X,
$$
where the objects are in the degrees $-2, -1, 0$, respectively.\qed
\end{exm}

\subsubsection{The cubical complex}

We recall some discussions from M. Levine \cite[\S 4.6]{Levine moving}. For $1 \leq j \leq n+1$ and $\epsilon \in \{ 0, 1 \}$, we have the codimension $1$ faces $\iota_{j} ^{\epsilon} : \square_{\psi} ^n \hookrightarrow \square_{\psi} ^{n+1}$. Let $(\square_{\psi}^n)_0$ denote the localization of $\square_{\psi} ^n$ at the $2n$ vertices. We similarly have $\iota_{j} ^{\epsilon} : (\square_{\psi} ^n)_0 \hookrightarrow ( \square_{\psi} ^{n+1})_0$. Define the formal sums of morphisms
$$
d_+ ^n, d_- ^n: \square_{\psi} ^n \to \square_{\psi} ^{n+1},
$$
$$
d_+ ^n:= \sum_{j=1} ^{n+1} (-1)^{j+1} \iota_j ^0, \ \ \ \ d_- ^n:= \sum_{j=1} ^{n+1} (-1)^{j} \iota_j ^1.
$$
They induce the following objects in $\mathbf{C} (\mathbb{Z}\Sch_k)$
$$
(\square_{\psi} ^{\bullet}, d_+), \ \ (\square_{\psi} ^{\bullet}, d_-), \ \ (\square_{\psi} ^{\bullet}, d= d_+ + d_-),
$$
and
$$
((\square_{\psi} ^{\bullet})_0, d_+),  \ \ ((\square_{\psi} ^{\bullet})_0, d_-),  \ \ ((\square_{\psi} ^{\bullet})_0, d),
$$
as well as morphisms of complexes $( (\square_{\psi} ^{\bullet})_0, d_? ) \to (\square_{\psi} ^{\bullet}, d_?)$ for $?= +, -, \emptyset$.

Here, for instance $(\square_{\psi} ^{\bullet}, d_+)$ means the complex
$$
\square_{\psi} ^0 \overset{d_+}{\to} \square_{\psi} ^1 \overset{d_+}{\to} \square_{\psi} ^2 \cdots \overset{d_+}{\to}  \square_{\psi}^3 \overset{d_+}{\to}  \cdots ,
$$
with $\square_{\psi}^n$ in degree $n$. The identity $d_+ \circ d_+ = 0$ holds by the cubical formalism.

\subsubsection{Cubiculations}
We recall cubiculations following M. Levine \cite[\S 4.7]{Levine moving} (or the paragraphs after S. Bloch \cite[Definition 1.3.3]{Bloch moving}). 

Let $c = (c_1, \ldots, c_n) \in S_0 \setminus \{ \mbox{the $2n$ distinguished divisors} \}$ be a $k$-rational point. Let $\pi: S_M \to \cdots \to S_1 \to S_0 = \mathbb{A}^n$ be a tower of blow-ups of faces as in Definition \ref{defn:face blow}. For $0 \leq j \leq M$, let $U_j := S_j \setminus \partial S_j$. Then $\pi$ gives the natural identifications
\begin{equation}\label{eqn:cvn}
\pi^{-1}: S_0 \setminus \{ \mbox{the $2n$ distinguished divisors} \} \overset{=}{\to} U_1  \overset{=}{\to}  U_2  \overset{=}{\to}  \cdots  \overset{=}{\to} U_M,
\end{equation}
so that we can regard $c$ as the uniquely determined $k$-rational point of $U_j$ under \eqref{eqn:cvn}, for each of $0 \leq j \leq M$.

\medskip

For each vertex $v \in S_M$, let $y^{v} = (y_1^v, \ldots, y_n ^v)$ be a coordinate system given on the open neighborhood $U_M ^v:= S_M \setminus \partial ^v S_M$, where $\partial^v S_M$ is the union (or the set) of divisors $D$ of $\partial S_M$ such that $v \not \in D$. 

We define the modified coordinate system $y^{v, c} := (y_1 ^{ v, c}, \ldots, y_n ^{v, c})$ by
\begin{equation}\label{eqn:3.2.2}
y_j ^{v, c}:= y_j ^v / y_j ^v (c).
\end{equation}
This defines the morphism (M. Levine \cite[p.322]{Levine moving})
\begin{equation}\label{eqn:3.2.2-1}
\lambda ^{v, c}: (\square_{\psi} ^n)_0 \to S_M
\end{equation}
given by inverting $y^{v,c}$ over $(\square_{\psi}^n)_0$ followed by the open immersion $U_M ^v \hookrightarrow S_M$.

They induce the ``internal" divisors defined by $\{ y_j ^{ v, c} = 1 \}$.

\medskip

We continue to follow M. Levine \cite[\S 4.7]{Levine moving}. For each vertex $v \in S_M$, let $\partial_v S_M$ be the divisors of $\partial S_M$ that \emph{do contain} $v$, with the induced order from $\partial S_M$. The ordered inclusion $\partial_v S_M \subset \partial S_M$ induces the morphism of complexes denoted by
$$
\eta^v: (S_M, \partial_v S_M) \to (S_M, \partial S_M).
$$

Let $\partial^+ (\square_{\psi} ^n)_0$ be the divisors $\{ D_i:= \{ y_i = 0 \} \ | \ 1 \leq i \leq n \}$. For any index set $I = \{ i_1 < \cdots < i_r\}\subset \{ 1, \ldots, n \}$, the coordinates $(y_1, \ldots, \check{y}_{i_1}, \ldots, \check{y}_{i_r}, \ldots, y_n)$ give the isomorphism $\iota_I: \left(\square_{\psi} ^{ n - | I |} \right)_0 \to D_I$. We have the natural morphism of complexes of degree $-n$
$$
\rho_n^+ : ( (\square_{\psi} ^\bullet)_0, d_+) \to ( (\square_{\psi} ^n)_0, \partial ^+ (\square_{\psi} ^n)_0)
$$
defined by the formal sum of the maps over all $I = \{ i_1 < i_2< \cdots < i_r \} \subset \{ 1, \ldots, n \}$
$$
(-1)^{ \sum_j i_j + nr}  \iota_{I} : \left( \square_{\psi} ^{ n-r}\right)_0 \to D_I.
$$

Define the map $\phi_+ ^{v,c}$ to be the composite
$$
( (\square_{\psi}^{\bullet})_0, d_+) \overset{\rho_n ^+}{\to} ( (\square_{\psi} ^n)_0, \partial ^+  (\square_{\psi} ^n)_0) \overset{ \lambda ^{v,c}}{\to} (S_M, \partial_v S_M) \overset{\eta^v}{\to} (S_M, \partial S_M).
$$
Similarly, we have $\phi^{v,c} : (  (\square_{\psi} ^\bullet)_0, d) \to (S_M, \partial S_M).$

\begin{defn}[{\cite[(4.4)]{Levine moving}}]\label{defn:4.4} Define $\phi^c: (  (\square_{\psi} ^\bullet)_0, d) \to (S_M, \partial S_M)$ to be the formal finite sum
$$
\phi^c:= \sum_v \varepsilon (v) \phi^{v,c}
$$
taken over all vertices $v \in S_M$. 
\end{defn}

\subsubsection{Higher level cubical subdivision}

Let $N$ be the number of distinguished divisors on $S_M$ for a tower $\pi: S_M \to \cdots \to S_0$ of blow-ups of faces as before in Definition \ref{defn:face blow}. Recall that (M. Levine \cite[p.327]{Levine moving}) a choice of a set map $\tau: \{ 1, \ldots, N \} \to \{ 1, \ldots, 2n \}$ such that $\pi ( (\partial S_M)_j) \subset (\partial S_0)_{\tau (j)}$ induces the associated morphism of complexes denoted by $\pi_* : (S_M, \partial S_M ) \to (S_0, \partial S_0),$ thus we have the composite
\begin{equation}\label{eqn:Le th}
\pi_* \circ \phi^c: ((\square_{\psi} ^{\bullet})_0, \partial (\square_{\psi} ^{\bullet})_0) \overset{\phi^c}{\to} (S_M, \partial S_M) \overset{\pi_*}{\to } (S_0, \partial S_0).
\end{equation}

This notation $\pi_*$ awkwardly looks like a push-forward, but it is not, and its restriction to $S_M$ (in degree $0$) is just the morphism $\pi$, for instance.
\medskip

The morphism \eqref{eqn:Le th} extends uniquely to the morphism of complexes (M. Levine \cite[(4.8)]{Levine moving})
\begin{equation}\label{eqn:Levine 4.8}
\Phi_\pi ^c: (\square_{\psi} ^{\bullet}, d) \to (S_0, \partial S_0),
\end{equation}
and this \eqref{eqn:Levine 4.8} is independent of the choice of the set map $\tau$ up to a chain homotopy (M. Levine \cite[Lemma 2.9]{Levine moving}). In the special case when we take $M=0$, we have $\pi= {\rm Id}$, which gives the special case
\begin{equation}\label{eqn:Levine 4.8-}
\Phi_{\rm Id} ^c: (\square_{\psi} ^{\bullet}, d)  \to (S_0, \partial S_0).
\end{equation}

\begin{defn}
Let $\pi: S_M \to \cdots \to S_0$ be a tower of blow-ups of faces as before. Choose a general $k$-rational point $c \in S_0 \setminus \{ \mbox{\rm faces} \}$. By construction, the morphism of complexes $\Phi_{\pi} ^c$ in \eqref{eqn:Levine 4.8} restricted to $\square_{\psi}^n$, can be identified with a finite $\mathbb{Z}$-linear combination of maps of the form
$$
\square_{\psi}^n  \dashrightarrow S_M \overset{\pi}{\to} S_0
$$
over the vertices $v \in S_M$, where the first broken arrow is a rational map, and the composition is a solid morphism.

\medskip

Let $Y$ be a $k$-scheme of finite type. For a cycle ${Z}$ on ${Y} \times S_0$, define \emph{the level $M$ cubical subdivision} of ${Z}$ on ${Y} \times S_M$ to be the strict transform
\begin{equation}\label{eqn:higher subdiv}
{\rm sd}_c ^M ({Z} ) := (\Phi_{\pi} ^c)^! ({Z}) =  \sum_{v\in S_M} \varepsilon (v) (\pi_*  \circ \phi^{v,c})^! ({Z}) \ \ \mbox{ on } {Y} \times \square_{\psi}^n,
\end{equation}
where the sum is taken over all vertices $v$ of $S_{M}$. When $M=0$, we have the equality $${\rm sd}_c ^0 ({Z}) = {\rm sd}_c ({Z})$$ with the cubical subdivision ${\rm sd}_c ({Z})$ of Definition \ref{defn:cub sd}.
 \qed
\end{defn}

\begin{remk}
The Figure 3 (cf. S. Bloch \cite[Figure 2, p.540]{Bloch moving}) illustrates the intuition of what is being done in the case of a single blow-up $S_1 \to S_0 = \mathbb{A}^2$ of a vertex. Here, we blow up the upper left corner vertex of the rectangle $S_0$ on the right to obtain the exceptional divisor, drawn by the thick upper-left sloped line of the left pentagon. The choice of $c$ in the interior of the rectangle gives the cubical faces near the vertices of the pentagon.
\begin{center}{Figure 3}
\begin{tikzpicture}[line cap=round,line join=round,>=triangle 45,x=1.0cm,y=1.0cm]
\clip(-0.56,-0.64) rectangle (6.5,2.9);
\fill[line width=2.pt,fill opacity=0.10000000149011612] (0.,0.) -- (0.,1.38) -- (1.04,2.44) -- (2.56,2.44) -- (2.58,0.) -- cycle;
\fill[line width=2.pt,fill opacity=0.10000000149011612] (4.,2.36) -- (4.,0.) -- (6.28,0.) -- (6.28,2.34) -- cycle;
\draw [line width=2.pt] (0.,0.)-- (0.,1.38);
\draw [line width=3.6pt] (0.,1.38)-- (1.04,2.44);
\draw [line width=2.pt] (1.04,2.44)-- (2.56,2.44);
\draw [line width=2.pt] (2.56,2.44)-- (2.58,0.);
\draw [line width=2.pt] (2.58,0.)-- (0.,0.);
\draw [line width=2.pt] (4.,2.36)-- (4.,0.);
\draw [line width=2.pt] (4.,0.)-- (6.28,0.);
\draw [line width=2.pt] (6.28,0.)-- (6.28,2.34);
\draw [line width=2.pt] (6.28,2.34)-- (4.,2.36);
\draw [->,line width=2.pt] (2.86,1.18) -- (3.66,1.18);
\draw [line width=2.pt,dash pattern=on 5pt off 5pt] (5.420241594214049,2.347541740401631)-- (5.44,0.);
\draw [line width=2.pt,dash pattern=on 5pt off 5pt] (4.,0.8)-- (6.34,0.76);
\draw [line width=2.pt,dash pattern=on 5pt off 5pt] (0.,0.7)-- (1.52,0.94);
\draw [line width=2.pt,dash pattern=on 5pt off 5pt] (1.52,0.94)-- (1.62,2.44);
\draw [line width=2.pt,dash pattern=on 5pt off 5pt] (1.52,0.94)-- (0.49519317975693816,1.8847161255214946);
\draw [line width=2.pt,dash pattern=on 5pt off 5pt] (1.52,0.94)-- (2.5722915686933154,0.9404286194155189);
\draw [line width=2.pt,dash pattern=on 5pt off 5pt] (1.52,0.94)-- (1.54,0.);
\end{tikzpicture}
\end{center}
\qed
\end{remk}

Between $\Phi_{\pi}^c$ and $\Phi_{\rm Id} ^c$ of \eqref{eqn:Levine 4.8} and \eqref{eqn:Levine 4.8-}, we have:

\begin{prop}[{\cite[Proposition 4.12]{Levine moving}} cf. {\cite[Proposition (3.4.2)]{Bloch moving}}]\label{prop:Levine homo}
Let $S_0:= \mathbb{A}^n$ with the $2n$ distinguished divisors. 
Let $\pi: S_M \to \cdots  \to S_0$ be a tower of blow-ups of faces. Let $c'= (c, c_{n+1}) \in (\mathbb{A}^1 \setminus \{ 0, 1 \})^{n+1}$. 

Then there is a homotopy $H_0 (c')$ between $\Phi_{\pi} ^c$ and $\Phi_{\rm Id} ^c$.
\end{prop}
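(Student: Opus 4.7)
The plan is to construct an explicit chain homotopy $H_0(c')$ of degree $+1$ from $(\square_\psi^\bullet, d)$ to $(S_0, \partial S_0)$, in which the extra cube coordinate $y_{n+1}$ (with the extra scalar $c_{n+1}$ playing the role of its generic value) is used as the homotopy direction, so that the two codimension $1$ faces $y_{n+1} = 1$ and $y_{n+1} = 0$ of the homotopy cylinder recover $\Phi_\pi^c$ and $\Phi_{\rm Id}^c$ respectively (up to sign).

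First I would reduce to the case $M=1$ by induction on the length of the tower. Write $\pi$ as the composition of the first blow-up $\pi_0 : S_1 \to S_0$ with a shorter tower $\pi' : S_M \to S_1$. The inductive hypothesis applied to $\pi'$, viewed as a tower over $S_1$, gives a chain homotopy between $\Phi_{\pi'}^c$ and $\Phi_{\mathrm{Id}_{S_1}}^c$ as maps into $(S_1, \partial S_1)$. Post-composing with $(\pi_0)_*$ and concatenating with the single blow-up case reduces the problem to handling $\pi_0: S_1 \to S_0$.

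For a single blow-up $\pi_0: S_1 \to S_0$ of a face $F$, I would build the homotopy vertex-by-vertex on $S_1$. For each vertex $v \in S_1$ with its distinguished coordinate system $y^v = (y_1^v, \ldots, y_n^v)$ at $v$, define a rational map $h^{v, c'}: \square_\psi^{n+1} \dashrightarrow S_0$ obtained from the scaled coordinates $y^{v, c}$ by interpolating, via $y_{n+1}$, between the scaled coordinates on $S_1$ at $v$ and their counterparts on $S_0$ at $w := \pi_0(v)$. Concretely, if $v$ lies on the exceptional divisor so that some $y_j^v$ has the form $y_j^w / y_i^w$ in the distinguished system at $w$, then the interpolation replaces $y_j^w / y_i^w$ by $y_j^w / (y_i^w)^{y_{n+1}}$, with an appropriate $c_{n+1}$-rescaling, so that the rational map degenerates to $\pi_0 \circ \phi^{v, c}$ at $y_{n+1} = 1$ and to $\phi^{w, c}$ at $y_{n+1} = 0$. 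Take the signed sum $H_0(c') := \sum_{v \in S_1} \varepsilon(v) H^{v, c'}$, where $H^{v, c'}$ is the map of complexes induced by $h^{v, c'}$.

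The final step is to verify the homotopy formula $dH_0(c') + H_0(c') d = \Phi_\pi^c - \Phi_{\rm Id}^c$ by a face-by-face inspection on $\square_\psi^{n+1}$. The faces $y_{n+1} = 0, 1$ yield $\Phi_{\rm Id}^c$ and $\Phi_\pi^c$ respectively by construction. For a face $y_i = \epsilon$ with $i \leq n$ and $\epsilon \in \{0, 1\}$, contributions from two adjacent vertices $v, w$ connected by an edge $\ell$ of $S_1$ cancel by the relation $\varepsilon(v) = -\sgn(g(v, w)) \varepsilon(w)$ for adjacent vertices; the remaining contributions factor through $\partial S_0$ and are absorbed by the target differential. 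The main obstacle will be the careful combinatorial bookkeeping: verifying that each rational map $h^{v, c'}$ is well-defined on every face, that adjacent-vertex contributions cancel with the correct signs, and that no indeterminacy locus interferes. This is exactly where the generic choice of the extra parameter $c_{n+1}$ enters, placing the entire family in general position with respect to the centers of the blow-ups and the distinguished divisors.
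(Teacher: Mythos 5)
Your overall strategy (use the extra coordinate $y_{n+1}$, with the generic value $c_{n+1}$, as a homotopy direction whose two ends recover $\Phi_{\pi}^c$ and $\Phi_{\rm Id}^c$) is the right intuition, and it is indeed the intuition behind the proof this paper relies on (the proposition is quoted from Levine, and the paper only recalls the construction). But your concrete construction has a genuine gap. The interpolation you propose, replacing $y_j^w/y_i^w$ by $y_j^w/(y_i^w)^{y_{n+1}}$, is not an algebraic map: raising a regular function to a variable exponent is transcendental, so $h^{v,c'}$ is not a rational map of $k$-varieties and cannot induce a map of complexes in $\mathbf{C}(\mathbb{Z}\Sch_k)$. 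The whole difficulty of the statement is to realize this interpolation algebraically, and that is exactly what the quoted construction does: one embeds $S_0$ into the cylinder $T_0:=\square_{\psi}^{n+1}$ as the face $\{y_{n+1}=1\}$, blows up $T_0$ along the same faces to get a tower $T_M\to\cdots\to T_0$ containing the given tower as strict transforms, and defines $H_0(c')$ as the cubiculation of $(T_M,\partial_- T_M)$ determined by $c'$, pushed forward along the projection $pr\colon T_0\to S_0$. The blow-up geometry of the cylinder is the algebraic substitute for your exponential interpolation, and no formula of your type is available without it.

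Two further points would still be missing even if an algebraic interpolation were supplied. First, your homotopy is a signed sum indexed by the vertices of $S_1$ (resp.\ $S_M$), but over a vertex $w$ of $S_0$ lying on the blown-up face there are several vertices of $S_1$; hence at the end $y_{n+1}=0$ your sum does not termwise reproduce $\sum_w \varepsilon(w)\phi^{w,c}$, and the claim that this face ``yields $\Phi_{\rm Id}^c$ by construction'' is precisely the nontrivial cancellation you defer to ``bookkeeping'' --- in the cylinder construction this is handled automatically because the centers lie in the strict transforms of $\{y_{n+1}=1\}$, so the face $\{y_{n+1}=0\}$ of $T_M$ maps isomorphically to $S_0$. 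Second, your induction on $M$ needs the statement for a tower over $(S_1,\partial S_1)$, which is not $\mathbb{A}^n$ with $2n$ distinguished divisors, so you must formulate and prove the proposition for general objects of $\mathcal{B}_{S_0}$ with distinguished coordinate systems, and you must also justify $(\pi_0)_*\circ\Phi_{\pi'}^c=\Phi_{\pi}^c$, which depends on the auxiliary set map $\tau$ and holds only up to chain homotopy. These are repairable, but as written the proposal does not constitute a proof.
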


We sketch part of the construction of the homotopy $H_0 (c')$ as we want to convince ourselves that the above homotopy induces a ``homotopy cycle" whose boundary is the difference of two cubical subdivisions at different levels of the blow-up tower.

The sketch follows the proof of M. Levine \cite[Proposition 4.12]{Levine moving}. Let $T_0:= \square_{\psi}^{n+1}$. We have the closed immersion $\iota: S_0 \hookrightarrow T_0$ given by $\{ y_{n+1} = 1 \}$, while we also have the flat projection $pr: T_0 \to S_0$, that ignores the last coordinate $y_{n+1}$. Apparently $pr \circ \iota = {\rm Id}_{S_0}$.

For the given blow-up tower $\pi: S_M \to \cdots \to S_0$, if $S_1 \to S_0$ is given by blowing up a face $F \subset S_0$, then via $\iota: S_0 \hookrightarrow T_0$, we regard $F$ as a face of $T_0$, and take the blow-up $T_1 \to T_0$. Here $T_1$ contains $S_1$ as the strict transform of $\iota (S_0)$. Inductively, we obtain $T_{i+1} \to T_i$ by blowing-up the face of $S_i$ obtained to get $S_{i+1} \to S_i$. This gives the tower $\pi': T_M \to \cdots \to T_0$ into which the tower $\pi$ is embedded, where the distinguished divisors in $ \partial T_M$ except for the strict transforms of $\{ y_{n+1} = \epsilon \}$, with $\epsilon \in \{ 0, 1 \}$, are in one-to-one correspondence with the divisors in $\partial S_M$. Let $\partial_{-} T_M$ be those divisors unrelated to $\{ y_{n+1} = \epsilon \}$ for $\epsilon \in \{0, 1 \}$. 

Then, the choice of $c' = (c, c_{n+1})$ gives a composite
\begin{equation}\label{eqn:3.2.11-1}
pr_* \circ \phi_h ^{c'} : ((\square_{\psi} ^{\bullet})_0, \partial ( \square_{\psi}^{\bullet})_0 ) \to (T_M, \partial_{-} T_M ) \to (S_0, \partial S_0),
\end{equation}
where $\phi_h ^{c'}$ is defined by Definition \ref{defn:4.4}, with $(\partial_M, \partial S_M)$ there replaced by $(T_M, \partial_- T_M)$, and \eqref{eqn:3.2.11-1} uniquely extends to (M. Levine \cite[(4.8)]{Levine moving})
$$
H_0 (c') : (\square_{\psi} ^{\bullet}, \partial  \square_{\psi}^{\bullet} ) \to (T_M, \partial_{-} T_M ) \to (S_0, \partial S_0).
$$
Identifying $S_j$ with the image of the closed immersion $S_j \hookrightarrow T_j$, we have (\cite[p.329]{Levine moving}) $d H_0 (c') = \Phi_{\pi} ^c - \Phi_{{\rm Id}} ^c$. 

\begin{remk}\label{remk:5.2.12'}
A point is that, for a cycle ${Z}$ on ${Y} \times S_0$, we have the flat pull-back $pr^* ({Z})$ on ${Y} \times T_0$, and thus the strict transform $H_0 (c')^! (pr^* {Z})$. When $c'$ is general, its boundary is $(\Phi_{\pi} ^c )^! ({Z}) - (\Phi_{{\rm Id}} ^c)^! ({Z})$ by Proposition \ref{prop:Levine homo}.
\qed
\end{remk}

\subsubsection{General position}

Let $Y$ be a $k$-scheme of finite type.
Recall the following Theorem \ref{thm:bloch-levine} from \cite[Theorem (0.3), p.70]{Levine JPAA} (cf. S. Bloch \cite[Lemma 2.1.1, Theorem 2.1.2]{Bloch moving} or M. Levine \cite[Theorem 6.12, p.343]{Levine moving}), which is based on M. Spivakovsky \cite{Spivakovsky} on Hironaka's polyhedral game:

\begin{thm}\label{thm:bloch-levine}
Let $k$ be an infinite field. 
Let $p: Z \to S_0$ be a morphism of $k$-schemes of finite type. Suppose 
$$
p (Z_i) \not \subset \{ \mbox{the $2n$ distinguished divisors} \}
$$
for every irreducible component $Z_i$ of $Z$. 

Then there exists a tower $\pi: S_M \to \cdots \to S_0$ of blow-ups of faces such that, for a general $k$-rational point $c \in S_0$ not contained in any face, each component of the strict transform $(\Phi_{\pi} ^c) ^! Z$ on $\square_{\psi}^n$ intersects properly with all the faces of $\square_\psi ^n$. 

Here, we regard $\Phi_{\pi} ^c$ as a finite $\mathbb{Z}$-linear combination of maps $\coprod_v \square_{\psi} ^n \dashrightarrow S_M \to S_0$, where $v$ runs over all vertices of $S_M$.
\end{thm}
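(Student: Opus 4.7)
The plan is to reduce the statement to a local Newton-polyhedron question at each vertex of the blow-up tower, then invoke Spivakovsky's resolution of Hironaka's polyhedral game, and finally use the genericity of the point $c$ together with a Bertini-type avoidance to conclude. Since the excerpt flags that this subsection contains no new material, my proposal is really a sketch of how the standard argument of \cite{Bloch moving}, \cite{Levine JPAA}, \cite{Levine moving} would be organized here.

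First I would localize. Pick an arbitrary vertex $v' \in S_M$ with distinguished coordinates $y^{v'}=(y_1^{v'},\ldots,y_n^{v'})$ and consider the rescaled coordinates $y_j^{v',c} = y_j^{v'}/y_j^{v'}(c)$. The map $\lambda^{v',c}: (\square_\psi^n)_0 \to S_M$ is then given by inverting $y^{v',c}$, and the summand of $\Phi_\pi^c$ indexed by $v'$ is the composite $\pi\circ\lambda^{v',c}$. For each irreducible component $Z_i$ of $Z$ and each face $F = \{y_{j_1}=\epsilon_1,\ldots,y_{j_s}=\epsilon_s\}$ of $\square_\psi^n$, proper intersection of the strict transform $(\pi\circ\lambda^{v',c})^!Z_i$ with $F$ translates, after the substitution $y_j^{v',c}=1$ or $y_j^{v',c}=0$, into a codimension estimate for the intersection of the strict transform of $p(Z_i)$ at $v'$ with a certain product of distinguished divisors and ``internal'' divisors $\{y_j^{v'}= c_j\cdot(\text{unit})\}$. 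The combinatorial gadget controlling this codimension estimate is the Newton polyhedron of the ideal of $p(Z_i)$ in the coordinates $y^{v'}$; the hypothesis $p(Z_i)\not\subset \partial S_0$ ensures that this polyhedron is not the full positive orthant.

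Second I would invoke the combinatorial core: Spivakovsky's theorem \cite{Spivakovsky}, in the form packaged in \cite[Theorem~2.1.2]{Bloch moving} and \cite[Theorem~0.3]{Levine JPAA}. For each local datum above, this provides a finite sequence of blow-ups along faces of the current distinguished divisor structure after which the Newton polyhedron at every new vertex is simplified to the degree that proper intersection with every face becomes a generic condition on the internal parameters. Because $S_0$ has finitely many vertices and $Z$ has finitely many components, and because blow-ups of disjoint face strata commute, one can concatenate these finitely many local resolutions into a single global tower $\pi: S_M \to \cdots \to S_0$ that simultaneously improves every relevant Newton polyhedron at every vertex of $S_M$.

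Third, with $\pi$ fixed, I would choose $c$. For each vertex $v'\in S_M$, each component $Z_i$, and each face $F\subset\square_\psi^n$, the set of $c\in S_0(k)\setminus\{\mathrm{faces}\}$ for which $(\pi\circ\lambda^{v',c})^!Z_i$ fails to meet $F$ properly is either all of $S_0$, which is excluded by the polyhedral simplification of the previous step, or a proper Zariski closed subset of $S_0$. The union of the latter over the finitely many triples $(v',Z_i,F)$ is a proper closed subset, and since $k$ is infinite any $k$-rational point $c$ in its complement delivers the required tower. The principal obstacle in this argument is of course the second step: arranging a single tower that uniformly resolves the cubical polyhedral game at every vertex of $S_M$ and for every component of $Z$, so that the Zariski-open set of good $c$'s is nonempty. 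This is exactly the content of Spivakovsky's partial resolution of Hironaka's game as adapted to the cubical face geometry in the cited references, and I would borrow their argument verbatim rather than reprove it here.
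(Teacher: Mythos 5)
The paper gives no proof of this theorem at all: it is imported verbatim from \cite[Theorem (0.3)]{Levine JPAA} (cf.\ \cite{Bloch moving}, \cite{Levine moving}), resting ultimately on Spivakovsky's solution of Hironaka's polyhedral game \cite{Spivakovsky}, and your proposal takes essentially the same route, sketching the structure of those cited proofs (localization at vertices via Newton polyhedra, Spivakovsky's strategy, genericity of $c$ over the infinite field) while deferring the substance to the same references. That is fine here; only note that your gloss that one ``concatenates'' independent local resolutions because blow-ups of disjoint face strata commute is looser than how the references obtain a single uniform tower (Spivakovsky's winning strategy is applied against any choice of vertex and component, so one tower is built inductively handling all local data simultaneously), but since you explicitly borrow their argument verbatim this does not affect correctness.
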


\subsection{The proof}\label{sec:proof localization}

We now prove Theorem \ref{thm:localization}. Let $k$ be an arbitrary field.

Consider $\mathcal{C}_{\bullet}={\rm coker} (\rho^Y_U : z_d (Y, \bullet) \to z_d (U, \bullet)) = \frac{ z_d (U , \bullet)}{ \rho^Y_U ( z_d ({Y}, \bullet))}$. We prove that $\mathcal{C}_{\bullet}$ is acyclic. The statement is trivial when $U= \emptyset$, so we may suppose $U \not = \emptyset$.

Let $n=0$. For each cycle ${Z} \in z_d (U, 0)$, its closure over ${Y}$ belongs to $z_d ({Y}, 0)$. Thus the restriction map
$z_d ({Y}, 0) \to z_d (U, 0)$ is surjective, thus ${\rm H}_0 (\mathcal{C}_{\bullet})= 0$. 

\medskip

We now suppose $n>0$. We prove that ${\rm H}_n (\mathcal{C}_{\bullet}) = 0$ in two steps.

\medskip

\noindent \textbf{Step 1:} \emph{Reduction to the case of infinite base fields.}

\medskip

Temporarily suppose that Theorem \ref{thm:localization} holds for all \emph{infinite} base fields. 

Let $k$ be a finite field. We use the infinite pro-$\ell$ extension trick as in S. Bloch \cite[Lemma 4.1.2]{Bloch moving}. The argument goes as follows.

For any prime $\ell$, we have an extension $k \hookrightarrow k_{\ell}$, where $k_{\ell}$ is an infinite field obtained as the colimit of a sequence of finite extensions $k \subset  \cdots \subset k_i \subset k_{i+1} \subset \cdots$ of fields such that each degree $[k_i:k]$ is a power of $\ell$. Let ${\mathcal{C}_{\bullet}}_{k_{\ell}}$ be the complex obtained via the base change ${Y}_{k_{\ell}} \to {Y}$.

Then ${\rm H}_n ({\mathcal{C}_{\bullet}}_{k_{\ell}}) = 0$ by the given assumption. By Corollary \ref{cor:pbpf degree}, every class $x \in {\rm H}_n (\mathcal{C}_{\bullet})$ is a torsion such that $\ell ^{N_1} x = 0$ for some $N_1 >0$. Since $\ell$ was an arbitrary prime (rename the first one to $\ell_1$), if we repeat the above with another prime $\ell_2 \not = \ell_1$, then we have $\ell_2^{N_2} x = 0$ for some $N_2 >0$. Since $\ell_1 ^{N_1}$ and $\ell_2 ^{N_2}$ are coprime, there exist $ a, b \in \mathbb{Z}$ such that $a \ell_1 ^{N_1} + b \ell_2 ^{N_2} = 1$. Hence $x = 1 \cdot x = (a \ell_1 ^{N_1} + b \ell_2 ^{N_2} ) x = a \ell_1 ^{N_1}x + b \ell_2 ^{N_2} x=0 + 0 = 0$. This implies Theorem \ref{thm:localization} for a finite field $k$, assuming that the theorem holds for all infinite fields.

\medskip

 It remains to prove the theorem when $k$ is infinite.

\noindent \textbf{Step 2:} Suppose that $k$ is an infinite field. 

\medskip

We continue our convention of using $\square^n_{\psi}$, where $\square _{\psi} = \mathbb{A}^1$ with the faces $\{0, 1 \}$, via the isomorphism $\square \simeq \square_{\psi}$ given by the automorphism $y \mapsto \frac{y}{y-1}$ of $\mathbb{P}_k ^1$. Write $S_0:= \square^n_{\psi}$ as before.

For $n>0$, let $\xi \in {\rm H}_n (\mathcal{C}_{\bullet})$ be an arbitrary class. By the normalization theorem (Theorem \ref{thm:normalization}), we can represent this class by a cycle ${Z} \in z_d (U , n)$ such that $\partial_i ^{\epsilon} ({Z}) $ is trivial in $\mathcal{C}_{n-1}$ for all $1 \leq i \leq n$ and $\epsilon \in \{ 0, 1\}$. 

Apply the cubical subdivision with respect to a general $k$-rational point $c \in S_0 \setminus \{ \mbox{\rm faces} \}$ (Proposition \ref{prop:cub sd}) so that ${Z}$ is equivalent to ${W}:= {\rm sd}_c ({Z}) = ^{\dagger}{\rm sd}_c ^0 ({Z}) \in z_d (U , n)$ in $\mathcal{C}_n$ modulo $\partial \mathcal{C}_{n+1}$, where $\dagger$ holds by Lemma \ref{lem:sd^0 comparison}.
This ${W}$ represents $\xi$ as well.

Consider the closure $\ov{{W}}$ of the cycle ${W}$ in ${Y} \times S_0$. Here $\overline{{W}}$ \emph{may not} intersect all faces of ${Y} \times S_0$ properly. We improve it with the ``blowing-up faces of a cube" technique discussed previously.

Indeed, by Theorem \ref{thm:bloch-levine} applied to the closure $\overline{{W}}$ on ${Y} \times S_0$, there exists a tower $\pi: S_M \to \cdots \to S_0$ of blow-ups of faces, and a general $k$-rational point $c \in S_0 \setminus \{ \mbox{\rm faces} \}$, such that the strict transform ${A}:= (\Phi_{\pi} ^c)^! \ov{ {Z}}$ on ${Y} \times \square_{\psi}^n$ on the level $S_M$ is in $z_d ({Y}, n)$. This ${A}$ is also the Zariski closure over ${Y}$ of the strict transform ${\rm sd}_c ^M ({Z}) =(\Phi_{\pi}^c)^! {Z}$. 

By Proposition \ref{prop:Levine homo} and Remark \ref{remk:5.2.12'}, modulo a boundary, the cycle ${\rm sd}_c ^0 ({Z}) = {W}$ is equivalent to ${\rm sd}_c ^M ({Z})$. This shows that ${\rm sd}_c ^M ({Z})$ also represents the original cycle class $\xi$. Since its closure ${A}$ belongs to $z_d ({Y}, n)$, it means $\xi = 0$ in ${\rm H}_n (\mathcal{C}_{\bullet})$. Since $\xi$ was arbitrary, this shows that $\mathcal{C}_{\bullet}$ is acyclic. This proves Theorem \ref{thm:localization}. \qed

\section{Zariski sheafifications and a flasque resolution}\label{sec:sheaf}

Let $Y$ be a $k$-scheme of finite type. Let ${\rm Op} (Y)$ be the category of open subschemes of $Y$.

In \S \ref{sec:sheaf}, we study the sheafification $\BGHz_d (\bullet)_Y$ of the cubical higher Chow presheaf 
$$
 \mathcal{P}_{\bullet, d}: U\in {\rm Op} (Y)  \mapsto z_d (U, \bullet),
 $$
 and describe the groups $\CH_d ({Y}, n)$ of Definitions \ref{defn:HCG} as hypercohomology groups, more specifically, 
\begin{equation}\label{eqn:hjysn}
 \CH_d ({Y} , n) \simeq \mathbb{H}_{\rm Zar} ^{-n} (Y, \BGHz_d (\bullet)_Y).
\end{equation}

\subsection{The topological support and a flasque sheaf}\label{sec:desc sheaf}

\begin{defn}\label{defn:top support}
Let $Y $ be a $k$-scheme of finite type and let $W \subset | Y|$ be a closed subset. We say an integral cycle ${Z}$ on ${Y}$ is \emph{supported in $W$}, if the topological support of $\mathcal{O}_{Z}$ is contained in $W$. A cycle $\alpha$ on $Y$ is \emph{supported in $W$}, if the union of the topological supports of the integral components of $\alpha$ is contained in $W$.\qed
\end{defn}

\begin{defn}
Let $Y$ be a $k$-scheme of finite type. 
Let $W \subset | Y| $ be a closed subset, and let $U:=Y \setminus W$.  
Define 
$$
 {G} _d  (W, n):= \ker ( \rho^Y_U:  z_d ({Y}, n) \to z_d (U, n)),
 $$
for the flat restriction map $\rho^Y_U$. This is the group of cycles in $z_d ({Y}, n)$ supported in $W \times \square^n$. 

Give the induced reduced closed subscheme structure on $W$, and let $\iota: W \hookrightarrow Y$ be the closed immersion. Via $\iota_*: z_d (W, n) \to z_d (Y, n)$, we may regard $z_d (W, n)$ as a subgroup. We note immediately that 
$$
G_d (W, n) = z_d (W, n)
$$
so that the sequence 
$$
0 \to z_d (W, n) \to z_d (Y, n) \to z_d (U, n)
$$
is exact.
\qed
\end{defn}

\medskip

We make the following apparent observation:

\begin{lem}\label{lem:topsupp_rel0}
Let $Y$ be a $k$-scheme of finite type. Let $W_1, W_2 \subset Y$ be closed subsets. 
\begin{enumerate}
\item If $W_1 \subset W_2$, then ${G}_d (W_1, n) \subset {G}_d (W_2, n)$.
\item We have ${G}_d  (W_1, n) \cap {G}_d  (W_2, n) = {G}_d (W_1 \cap W_2, n)$.
\end{enumerate}
\end{lem}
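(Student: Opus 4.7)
The plan is to reduce everything to the characterization of $G_d(W,n)$ as the group of cycles supported (in the topological sense of Definition~\ref{defn:top support}) in $W \times \square^n$. Both parts will then follow from the elementary set-theoretic behavior of topological supports.

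For part (1), I would argue directly: suppose $Z \in G_d(W_1, n)$. Writing $Z = \sum n_i Z_i$ as a finite $\mathbb{Z}$-linear combination of integral cycles $Z_i \subset Y \times \square^n$, each component has its topological support contained in $W_1 \times \square^n$. Since $W_1 \subset W_2$, we get $W_1 \times \square^n \subset W_2 \times \square^n$, hence each component is supported in $W_2 \times \square^n$ as well. Therefore $Z \in G_d(W_2, n)$, establishing the inclusion. (Alternatively, one factors the restriction map through the intermediate open $U_2 = Y \setminus W_2 \subset U_1 = Y \setminus W_1$, observing that the kernel grows as the open set shrinks.)

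For part (2), the inclusion $\supseteq$ is immediate from part (1) applied to $W_1 \cap W_2 \subset W_i$ for $i = 1, 2$. For $\subseteq$, take $Z = \sum n_i Z_i \in G_d(W_1, n) \cap G_d(W_2, n)$ with $Z_i$ integral. Each component is supported in both $W_1 \times \square^n$ and $W_2 \times \square^n$, hence in their intersection $(W_1 \cap W_2) \times \square^n$. Thus $Z$ is supported in $(W_1 \cap W_2) \times \square^n$, so $Z \in G_d(W_1 \cap W_2, n)$.

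There is no real obstacle here; the entire content of the lemma is the trivial fact that topological supports behave well under inclusions and intersections of closed subsets. The only point requiring a small remark is that one should work component-by-component so that the word ``supported'' is used in the sense of Definition~\ref{defn:top support}, and one should note that $(W_1 \times \square^n) \cap (W_2 \times \square^n) = (W_1 \cap W_2) \times \square^n$ as closed subsets of $Y \times \square^n$, which is standard for fiber products.
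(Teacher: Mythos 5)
Your proposal is correct and is exactly the intended argument: the paper states this lemma without proof as an ``apparent observation,'' having just identified $G_d(W,n)$ with the cycles supported in $W\times\square^n$, and your component-by-component support argument (together with the identity $(W_1\times\square^n)\cap(W_2\times\square^n)=(W_1\cap W_2)\times\square^n$) is precisely the verification being left to the reader.
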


Lemma \ref{lem:flasque psh} and Proposition \ref{prop:flasque sh} below are analogues of what was already proven in the simplicial version of higher Chow cycles in S. Bloch \cite[Theorem (3.4), p.278]{Bloch HC}. We present here detailed arguments because ours for the cubical version don't follow as a special case of \emph{loc.cit.}, but we remark that the essential ideas are from there.

\begin{lem}\label{lem:flasque psh}
Let $Y$ be a $k$-scheme of finite type. Let ${\rm Op} (Y)$ be the category of open subschemes of $Y$. Let $n \geq 0$ be integers. Consider the association
\begin{equation}\label{eqn:sheaf small}
{\mathcal{S}}_n = {\mathcal{S}}_{n,d} :  U \in {\rm Op} (Y) \mapsto  \frac{ z_d  ( {Y} , n)}{{G}_d (Y\setminus U, n )},
\end{equation}
Then $\mathcal{S}_n$ is a flasque presheaf on $Y_{\rm Zar}$.
\end{lem}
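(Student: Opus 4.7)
The plan is to verify three items: first that the restriction maps of $\mathcal{S}_n$ are well-defined, second that they satisfy the presheaf axioms, and third that they are surjective, which is exactly the flasqueness condition. The whole argument will be essentially formal, because each $\mathcal{S}_n(U)$ is defined as a quotient of the single fixed group $z_d(Y,n)$.

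First I would construct the restriction maps. For open subsets $V \subset U \subset Y$, the complements are closed subsets of $Y$ with $Y \setminus U \subset Y \setminus V$. By Lemma \ref{lem:topsupp_rel0}(1), this inclusion yields $G_d(Y \setminus U, n) \subset G_d(Y \setminus V, n)$ as subgroups of $z_d(Y,n)$. Passing to quotients gives the natural surjection
$$\rho_{UV}: \frac{z_d(Y,n)}{G_d(Y\setminus U, n)} \twoheadrightarrow \frac{z_d(Y,n)}{G_d(Y\setminus V, n)},$$
which I take as the restriction map in $\mathcal{S}_n$. The presheaf axioms $\rho_{UU} = \mathrm{id}$ and $\rho_{VW} \circ \rho_{UV} = \rho_{UW}$ for $W \subset V \subset U$ are immediate from the formalism of successive quotients of abelian groups by a nested chain of subgroups.

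Flasqueness then follows with no extra work: every $\rho_{UV}$ has been constructed as the canonical surjection from a quotient by a smaller subgroup onto a quotient by a larger subgroup, so it is surjective by construction. Honestly, there is no real obstacle in this statement. The clever feature of the definition of $\mathcal{S}_n$, namely replacing $z_d(U,n)$ (which naturally carries a nontrivial differential structure of its own) by a global quotient of the single group $z_d(Y,n)$, is precisely what makes flasqueness transparent. The only mild point to keep in mind is the identification $G_d(W,n) = z_d(W,n)$ noted just before Lemma \ref{lem:topsupp_rel0}, which ensures that these subgroups behave well under the containment of closed sets and justifies the appeal to part (1) of that lemma.
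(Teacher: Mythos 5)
Your proof is correct and follows essentially the same route as the paper: the inclusion $G_d(Y\setminus U,n)\subset G_d(Y\setminus V,n)$ from Lemma \ref{lem:topsupp_rel0}(1) gives the restriction maps as canonical quotient surjections, and both the presheaf axioms and flasqueness are then formal. Nothing is missing.
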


\begin{proof}
For two Zariski open subsets $V \subset U$ of $Y$, we have closed subsets $Y \setminus U \subset Y \setminus V \subset Y$. They induce the natural inclusions (Lemma \ref{lem:topsupp_rel0})
$$
G_d (Y \setminus U,n) \subset G_d (Y \setminus V,n) \subset z_d  ({Y}, n).
$$
These in turn induce the natural surjection
$$
\rho_{V} ^U: \mathcal{S}_n (U) =  \frac{ z_d ( {Y} , n)}{G_d ( Y \setminus U, n) } \twoheadrightarrow \mathcal{S}_n (V) =  \frac{ z_d  ( {Y} , n)}{G_d ( Y \setminus V, n)}.
$$
When $U_3 \subset U_2 \subset U_1$ are open subsets of $Y$, that $\rho_{U_3} ^{U_1} = \rho_{U_3} ^{U_2} \circ \rho_{U_2} ^{U_1}$ is apparent. Hence $\mathcal{S}_n$ is a flasque presheaf.
\end{proof}

\begin{prop}\label{prop:flasque sh}
For $n \geq 0$, the presheaf $\mathcal{S}_n= \mathcal{S}_{n,d}$ of Lemma \ref{lem:flasque psh} is a flasque sheaf on $Y_{\rm Zar}$. 
\end{prop}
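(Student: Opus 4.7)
The plan is to build on Lemma \ref{lem:flasque psh}, which already gives flasqueness of the presheaf $\mathcal{S}_n$, so only the two sheaf axioms remain. Since $Y$ is of finite type over $k$ and hence Noetherian, every open cover of an open $U \subset Y$ admits a finite subcover, so I would reduce the verification to finite covers $U = U_1 \cup \cdots \cup U_r$.

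Separation should be immediate. Suppose $s \in \mathcal{S}_n(U)$ is represented by $Z \in z_d(Y, n)$ and restricts to zero in each $\mathcal{S}_n(U_\alpha)$; this means $Z \in G_d(Y \setminus U_\alpha, n)$ for every $\alpha$, i.e., the topological support of $Z$ lies in $(Y \setminus U_\alpha) \times \square^n$. Intersecting over $\alpha$ (iterating Lemma \ref{lem:topsupp_rel0}(2)), the support lies in $(Y \setminus U) \times \square^n$, so $Z \in G_d(Y \setminus U, n)$ and $s = 0$ in $\mathcal{S}_n(U)$.

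For gluing, I would induct on $r$ to reduce to $r = 2$. Given sections represented by $Z_1, Z_2 \in z_d(Y, n)$ satisfying $W := Z_1 - Z_2 \in G_d(Y \setminus (U_1 \cap U_2), n)$, set $A_i := Y \setminus U_i$, so that $Y \setminus (U_1 \cap U_2) = A_1 \cup A_2$. Every integral component of $W$ is then an integral closed subscheme of $(A_1 \cup A_2) \times \square^n$, hence contained in $A_1 \times \square^n$ or in $A_2 \times \square^n$; I would partition the components accordingly and write $W = W_1 + W_2$ with $W_i$ supported in $A_i \times \square^n$. Setting $Z := Z_1 - W_1 = Z_2 + W_2$, the equalities $Z - Z_1 = -W_1 \in G_d(A_1, n)$ and $Z - Z_2 = W_2 \in G_d(A_2, n)$ would show that the class of $Z$ in $\mathcal{S}_n(U)$ restricts to $s_i$ on each $U_i$, as desired.

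The single subtle point — and the only step requiring care — is verifying that the partial sums $W_i$ are admissible, that is, actually lie in $z_d(Y, n)$ (and not merely in $z_{d+n}(Y \times \square^n)$). This follows from the fact that admissibility of a $\mathbb{Z}$-linear combination of integral cycles is a proper-intersection condition checked one component at a time, combined with the observation that every integral component of $W_1$ already occurs with nonzero coefficient in the admissible cycle $Z_1$ or $Z_2$, and therefore intersects all faces $Y \times F$ in the correct codimension; the same reasoning gives $W_2 \in z_d(Y,n)$, and the construction goes through.
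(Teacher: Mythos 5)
Your proof is correct, and while its skeleton (reduction to finite, then two-element covers; separation via Lemma \ref{lem:topsupp_rel0}; then a gluing construction) matches the paper's, the key gluing step is carried out by a genuinely different construction. The paper restricts the representatives $x_1,x_2$ to $U$ and $V$, glues these restrictions to a cycle on $(U\cup V)\times\square^n$, takes its Zariski closure $\tilde{x}$ in $Y\times\square^n$, and then must prove a separate claim that $\tilde{x}$ is admissible; this is done by comparing $\tilde{x}$ with the closures $\tilde{x}_U,\tilde{x}_V$ and running a cancellation argument on the error terms $\Delta_U,\Delta_V$ to show the piece supported off $U\cup V$ vanishes. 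You instead decompose the difference $W=Z_1-Z_2$, which is supported on $(A_1\cup A_2)\times\square^n$, componentwise as $W=W_1+W_2$ with $W_i$ supported on $A_i\times\square^n$ (an integral component, being irreducible, lies in one of the two closed sets), and set $Z:=Z_1-W_1=Z_2+W_2$; admissibility of $W_1,W_2$ --- hence of $Z$, and the memberships $-W_1\in G_d(A_1,n)$, $W_2\in G_d(A_2,n)$ --- is immediate because each component of $W$ occurs with nonzero coefficient in the admissible $Z_1$ or $Z_2$ and admissibility is checked one integral component at a time. Both arguments ultimately rest on this componentwise nature of admissibility (the paper uses it inside its cancellation claim), so your route is a streamlining rather than a new idea, but it buys a shorter proof that avoids the closure construction entirely; the only trade-off is that your $Z$ may carry extra components supported in $(Y\setminus(U_1\cup U_2))\times\square^n$, which is harmless for the statement, whereas the paper's $\tilde{x}$ is by construction free of such components.
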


\begin{proof}
By Lemma \ref{lem:flasque psh}, it remains to show that $\mathcal{S}_n$ is a sheaf.

\medskip

By induction on the number of open sets in a finite open cover of $Y$, it is enough to check that for two open subsets $U, V \subset Y$, the sequence
\begin{equation}\label{eqn:sheafproof}
0 \to \mathcal{S}_n (U \cup V) \overset{\delta_0}{\to} \mathcal{S}_n (U) \oplus \mathcal{S}_n (V) \overset{\delta_1}{\to} \mathcal{S}_n (U \cap V)
\end{equation}
is exact, where $\delta_0 = (\rho_U ^{U \cup V}, \rho_V ^{U \cup V})$ and $\delta_1 (\bar{x}_1, \bar{x}_2) = \rho^U_{U\cap V} (\bar{x}_1) - \rho ^V _{U \cap V} (\bar{x}_2)$. 

\medskip

We first show $\delta_0$ is injective. 

Suppose that $\delta_0 (\bar{x}) = 0$ for some $\bar{x} \in \mathcal{S}_n (U \cup V)$. It means there is a cycle $x \in z_d ({Y}, n)$ representing $\bar{x}$ satisfying
\begin{equation}\label{eqn:exactness1}
x \in G_d (Y \setminus U,n ) \cap G_d (Y \setminus V, n).
\end{equation}
The intersection of the groups in \eqref{eqn:exactness1} is $G_d ((Y \setminus U) \cap (Y \setminus  V), n)= G_d (Y \setminus (U \cup V), n)$ by Lemma \ref{lem:topsupp_rel0}. Thus \eqref{eqn:exactness1} means $\bar{x} = 0$ in $\mathcal{S}_n (U \cup V)$. Thus $\delta_0$ is injective.

\medskip

That $\delta_1 \circ \delta_0 = 0$ is apparent, so that ${\rm im} (\delta_0) \subset \ker \delta_1$. 

\medskip

We prove ${\rm im} (\delta_0) \supset \ker \delta_1$. 

Let $(\bar{x}_1, \bar{x}_2) \in \ker \delta_1$. It means there exist cycles $x_1 , x_2 \in z_d  ( Y, n)$ representing $\bar{x}_1, \bar{x}_2$, respectively, satisfying
\begin{equation}\label{eqn:exactness2}
x_1 - x_2 \in G_d ( Y \setminus (U \cap V), n) = G_d ( (Y \setminus U) \cup (Y \setminus V), n).
\end{equation}

 Since $x_1 - x_2$ is supported in $ (Y \setminus (U \cap V)) \times \square^n$ by \eqref{eqn:exactness2}, the restriction $x_1|_{U\cap V} - x_2 |_{U \cap V}$ has the empty topological support. Hence $x_1|_U$ and $x_2|_V$ glue to give a cycle $x$ on $(U \cup V) \times \square^n$ such that $x|_U = x_1|_U$ and $x|_V = x_2 |_V$.
 
 Let $\tilde{x}$ be the Zariski closure of $x$ in $Y \times \square^n$. By construction, $\tilde{x}$ has no component supported in $(Y \setminus ( U \cup V)) \times \square^n$. We don't know yet whether $\tilde{x} \in z_d (Y, n)$, but we \textbf{claim} that it is so. We prove it in what follows.
 
 \medskip
 
 Define $\tilde{x}_U$ to be the closure of $x_1|_U$ in $Y \times \square^n$, and $\tilde{x}_V$ be the closure of $x_2|_V$ in $Y \times \square^n$. Here, by construction no component of $\tilde{x}_U$ is supported in $(Y \setminus U) \times \square^n$. Since $\tilde{x}_U$ is defined as the closure of the open restriction $x_1|_U$ of the (already) admissible cycle $x_1 \in z_d (Y, n)$, we have $\tilde{x}_U \in z_d (Y, n)$. Similarly, no component of $\tilde{x}_V$ is supported in $(Y \setminus V) \times \square^n$ and $\tilde{x}_V \in z_d (Y, n)$. 
 
We can write
\begin{equation}\label{eqn:S sheaf 1}
\tilde{x} = \tilde{x}_U + \Delta_U = \tilde{x}_V + \Delta_V
\end{equation}
for some cycles $\Delta_U$ and $\Delta_V$ on $Y \times \square^n$ such that $\Delta_U$ is supported in $(Y \setminus U) \times \square^n$, $\Delta_V$ is supported in $( Y \setminus V) \times \square^n$, while we do not know whether $\Delta_U, \Delta_V$ are admissible. 
 
 At least by \eqref{eqn:S sheaf 1}, we know 
 \begin{equation}\label{eqn:S sheaf 2}
 \Delta_U - \Delta_V =  \tilde{x}_V -  \tilde{x}_U \in z_d (Y, n).
 \end{equation}
  If there is any integral component $W$ of $\Delta_U$ that is \emph{not} admissible, then to have the admissibility of $\Delta_U - \Delta_V$ in \eqref{eqn:S sheaf 2}, $\Delta_V$ must have $W$ as a component to cancel out the first $W$ from $\Delta_U$. Hence such $W$ has the support in $ (( Y \setminus U) \times \square^n) \cap ((Y \setminus V) \times \square^n) = (Y \setminus (U \cup V)) \times \square^n$. The same holds for non-admissible integral components of $\Delta_V$ as well.

 Thus we can write
 \begin{equation}\label{eqn:S sheaf 3}
 \Delta_U = \Delta_U' + \Delta_{U,V}
 \end{equation}
 for some cycles $\Delta_U'$ and $\Delta_{U,V}$ on $Y \times \square^n$, such that $\Delta_U ' \in z_d (Y, n)$, and the support of $\Delta_{U,V}$ is in $(Y \setminus  (U \cup V)) \times \square^n$. If any component of $\Delta_U'$ is supported in $(Y \setminus (U \cup V)) \times \square^n$, then move it into $\Delta_{U,V}$, so that we may assume $\Delta_U'$ is admissible and the supports of its components are not in $(Y \setminus (U \cup V)) \times \square^n$.

Combining \eqref{eqn:S sheaf 1} and \eqref{eqn:S sheaf 3}, we have
$$
\tilde{x} = \tilde{x}_U + \Delta_U ' + \Delta_{U,V}.
$$
But by construction, no component of $\tilde{x}$ is supported in $(Y \setminus (U \cup V)) \times \square^n$. Hence, we must have $\Delta_{U,V} = 0$. In particular, this means $\tilde{x} = \tilde{x}_U + \Delta_U' \in z_d (Y, n)$, i.e. $\tilde{x}$ is admissible, proving the \textbf{claim}.

\medskip

Let $\bar{x} \in \mathcal{S}_n (U \cup V)$ be the class of $\tilde{x}$. Because $\tilde{x} = \tilde{x}_U + \Delta_U' $ while $\Delta_U' \in G_d (Y \setminus U, n)$, we have $\tilde{x}_U \equiv \rho^{U \cup V}_U (\bar{x})$ in $\mathcal{S}_n (U)$. By symmetry, $\tilde{x}_V \equiv \rho^{U \cup V}_V (\bar{x})$ in $\mathcal{S}_n (V)$. 

On the other hand, $\bar{x}_1 \equiv \tilde{x}_U$ in $\mathcal{S}_n (U)$ and $\bar{x}_2 \equiv \tilde{x}_V$ in $\mathcal{S}_V$ by the constructions of $\tilde{x}_U$ and $\tilde{x}_V$. Thus we have $\delta_0 (\bar{x}) = (\bar{x}_1, \bar{x}_2)$. This proves the exactness of \eqref{eqn:sheafproof} at the middle term of the sequence.

\medskip

Hence \eqref{eqn:sheafproof} is exact. Thus $\mathcal{S}_n= \mathcal{S}_{n,d}$ is a sheaf.
\end{proof}

\begin{cor}\label{cor:flasque sh}
For each pair of open subsets $U, V \subset Y$, we have the short exact sequence
$$
0 \to \mathcal{S}_{n,d} (U \cup V) \overset{\delta_0}{\to} \mathcal{S}_{n,d} (U) \oplus \mathcal{S}_{n,d} (V) \overset{\delta_1}{\to} \mathcal{S}_{n,d} (U \cap V) \to 0.
$$
\end{cor}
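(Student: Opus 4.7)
The plan is to note that the left exactness and exactness at the middle term of the stated sequence are already contained in Proposition \ref{prop:flasque sh}; what remains is to establish the surjectivity of $\delta_1$. The key input is that each restriction map $\rho^U_{U\cap V}$ is itself surjective, which was essentially observed in the proof of Lemma \ref{lem:flasque psh}.

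More concretely, I would first recall that by the construction in Lemma \ref{lem:flasque psh}, the map $\rho^U_{U\cap V}: \mathcal{S}_{n,d}(U) \to \mathcal{S}_{n,d}(U \cap V)$ is the natural surjective quotient
$$
\frac{z_d(Y,n)}{G_d(Y \setminus U, n)} \twoheadrightarrow \frac{z_d(Y,n)}{G_d(Y \setminus (U \cap V), n)},
$$
which is surjective because $G_d(Y \setminus U, n) \subset G_d(Y \setminus (U \cap V), n)$ by Lemma \ref{lem:topsupp_rel0}, both groups having the same numerator $z_d(Y,n)$.

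Given this, the surjectivity of $\delta_1$ is immediate: for any class $\bar{y} \in \mathcal{S}_{n,d}(U \cap V)$, one chooses a lift $\bar{x}_1 \in \mathcal{S}_{n,d}(U)$ satisfying $\rho^U_{U\cap V}(\bar{x}_1) = \bar{y}$, and then
$$
\delta_1(\bar{x}_1, 0) \;=\; \rho^U_{U\cap V}(\bar{x}_1) - \rho^V_{U\cap V}(0) \;=\; \bar{y}.
$$
Combining this with Proposition \ref{prop:flasque sh} yields the asserted short exact sequence.

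I do not anticipate any obstacle here, as all the substantive work was carried out in Lemma \ref{lem:flasque psh} (flasqueness, which feeds the surjectivity of the restrictions) and Proposition \ref{prop:flasque sh} (the sheaf axiom, which gives exactness at the first two spots). The corollary is a routine consequence of the standard fact that for any flasque sheaf on a topological space, the Mayer--Vietoris sequence is short exact.
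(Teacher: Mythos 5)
Your proposal is correct and follows essentially the same route as the paper: exactness at the first two terms is quoted from Proposition \ref{prop:flasque sh}, and the surjectivity of $\delta_1$ is deduced from the surjectivity of the restriction $\mathcal{S}_{n,d}(U)\to\mathcal{S}_{n,d}(U\cap V)$ guaranteed by the flasqueness in Lemma \ref{lem:flasque psh}. Your explicit quotient description and the lift $(\bar{x}_1,0)$ merely spell out the same argument in slightly more detail.
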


\begin{proof}
By the exact sequence \eqref{eqn:sheafproof} of Proposition \ref{prop:flasque sh}, it remains to check the surjectivity of $\delta_1$. Since ${S}_{n,d}$ is flasque by Lemma \ref{lem:flasque psh}, the restriction map $\mathcal{S}_{n,d} (U) \to \mathcal{S}_{n,d} (U \cap V)$ is already surjective. Thus so is the map $\delta_1$.
\end{proof}

Theorem \ref{thm:localization} and Proposition \ref{prop:flasque sh} together now imply:

\begin{prop}\label{prop:flasque rep}
Let $Y$ be a $k$-scheme of finite type. Then we have a natural morphism of complexes of abelian sheaves on $Y_{\rm Zar}$
\begin{equation}\label{eqn:SP_sh}
\mathcal{S}_{\bullet, d}  \to \BGHz_d ( \bullet)_Y,
\end{equation}
and this is a quasi-isomorphism.
\end{prop}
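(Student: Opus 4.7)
The plan is to construct the morphism \eqref{eqn:SP_sh} explicitly at the presheaf level, then check the quasi-isomorphism stalk by stalk, using Theorem \ref{thm:localization} as the only substantial input.

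First, I would construct the map. By the very definition of $G_d(Y\setminus U, n)$, one has $G_d(Y\setminus U, n) = \ker(\rho^Y_U\colon z_d(Y,n)\to z_d(U,n))$, so the first isomorphism theorem gives a canonical injection
$$
\mathcal{S}_{n,d}(U) \;=\; \frac{z_d(Y,n)}{G_d(Y\setminus U, n)} \;\hookrightarrow\; z_d(U,n),
$$
whose image is precisely $\rho^Y_U(z_d(Y,n))$. These injections are manifestly compatible with the restriction maps in both $U$ and $n$ (since the boundary operators commute with flat pull-back by closures of cycles in $Y$), so they assemble into a morphism of complexes of presheaves $\mathcal{S}_{\bullet,d} \to \mathcal{P}_{\bullet,d}$. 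Composing with the sheafification map $\mathcal{P}_{\bullet,d}\to \BGHz_d(\bullet)_Y$ and using that $\mathcal{S}_{\bullet,d}$ is already a sheaf by Proposition \ref{prop:flasque sh}, I obtain the desired morphism of complexes of sheaves \eqref{eqn:SP_sh}.

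Next, I would verify that this is a quasi-isomorphism by checking on stalks. Fix $y\in Y$. For each open $U\subset Y$, the construction above yields a short exact sequence of complexes of abelian groups
$$
0 \;\to\; \mathcal{S}_{\bullet,d}(U) \;\to\; z_d(U,\bullet) \;\to\; \mathcal{C}_{\bullet}(U) \;\to\; 0,
$$
where $\mathcal{C}_\bullet(U) = \operatorname{coker}(\rho^Y_U)$ is precisely the complex to which Theorem \ref{thm:localization} applies. Passing to the filtered colimit over open neighborhoods $U\ni y$, which is exact and commutes with homology, produces the stalk-level short exact sequence
$$
0 \;\to\; \mathcal{S}_{\bullet,d,y} \;\to\; \BGHz_d(\bullet)_{Y,y} \;\to\; \mathcal{C}_{\bullet,y} \;\to\; 0.
$$
By Theorem \ref{thm:localization}, each $\mathcal{C}_\bullet(U)$ is acyclic, so $\mathcal{C}_{\bullet,y}$ is a filtered colimit of acyclic complexes, hence itself acyclic. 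The long exact homology sequence then forces $\mathcal{S}_{\bullet,d,y}\to \BGHz_d(\bullet)_{Y,y}$ to be a quasi-isomorphism, and since this holds for every $y\in Y$, the morphism \eqref{eqn:SP_sh} is a quasi-isomorphism of complexes of Zariski sheaves.

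There is no serious obstacle here: the real content has already been absorbed into Theorem \ref{thm:localization}. The only care required is the verification that the map is well defined at the presheaf level — which is immediate from the equality $G_d(Y\setminus U, n) = \ker(\rho^Y_U)$ — and the observation that acyclicity of $\mathcal{C}_\bullet(U)$ on each open $U$ propagates to acyclicity of the stalk of the sheafified cokernel by exactness of filtered colimits.
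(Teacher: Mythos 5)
Your proposal is correct and follows essentially the same route as the paper: the injection $\mathcal{S}_{n,d}(U)=z_d(Y,n)/G_d(Y\setminus U,n)\hookrightarrow z_d(U,n)$ at the presheaf level, sheafification together with Proposition \ref{prop:flasque sh}, and then Theorem \ref{thm:localization} (acyclicity of the cokernel $\mathcal{C}_\bullet$ over every open $U$) to get a quasi-isomorphism on each open and hence on stalks. Your explicit use of exactness of filtered colimits when passing to stalks just spells out a step the paper leaves implicit.
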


\begin{proof}
For each nonempty open subset $U \subset Y$, the sequence
$$
0 \to {G}_d (Y \setminus U, n) \to z_d ({Y} , n) \to z_d  (U, n),
$$ 
is exact. Hence we deduce the induced injective homomorphism
\begin{equation}\label{eqn:global qi-local}
\mathcal{S}_{n,d} (U) = \frac{ z_d ({Y} , n)}{G_d (Y \setminus U, n)} \to \mathcal{P}_{n,d} (U) =z_d ( U, n).
\end{equation}
Since they are equivariant under the boundary maps $\partial$, collecting them over all $n \geq 0$, we have the injective homomorphism of complexes of presheaves on $Y$
\begin{equation}\label{eqn:global qi-local2}
 \mathcal{S}_{\bullet, d}  \to \mathcal{P}_{\bullet,d},
\end{equation}
and they in turn induce the injective morphisms \eqref{eqn:SP_sh} of complexes of sheaves on $Y_{\rm Zar}$
because $\BGHz_d ( \bullet)_Y$ is the sheafification of ${\mathcal{P}}_{\bullet,d}$, while $\mathcal{S}_{\bullet, d}$ is already a sheaf by Proposition \ref{prop:flasque sh}. 

By Theorem \ref{thm:localization}, the cokernels of \eqref{eqn:global qi-local2} are acyclic over each nonempty open subset $U \subset Y$. Hence the morphism \eqref{eqn:global qi-local2} is a quasi-isomorphism for any nonempty open $U \subset Y$. These quasi-isomorphisms induce a quasi-isomorphism of the stalks of \eqref{eqn:SP_sh} at each scheme point $y \in Y$. Thus, the morphism \eqref{eqn:SP_sh} is a quasi-isomorphism of complexes of abelian sheaves on $Y_{\rm Zar}$.
\end{proof}

The groups $\CH_d ({Y}, n)$ in Definitions \ref{defn:HCG} can be related to the hypercohomology groups of the complexes of sheaves:

\begin{thm}\label{thm:two BGH}
Let $Y$ be a $k$-scheme of finite type. Then we have an isomorphism
\begin{equation}\label{eqn:semi-loc case}
 \CH_d  ({Y}, n) =  \mathbb{H}_{\rm Zar} ^{-n} (Y, \BGHz_d (\bullet)_Y).
\end{equation}
\end{thm}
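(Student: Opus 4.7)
The plan is to assemble the isomorphism \eqref{eqn:semi-loc case} directly from the previously established results, with essentially no new content beyond some bookkeeping. The key observation is that the complex $\mathcal{S}_{\bullet, d}$ constructed in \S \ref{sec:desc sheaf} plays a double role: it is a complex of flasque sheaves (hence computes hypercohomology via global sections), while at the same time being quasi-isomorphic to the sheafification $\BGHz_d(\bullet)_Y$ on $Y_{\rm Zar}$.

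First I would invoke Proposition \ref{prop:flasque rep}, which gives a quasi-isomorphism of complexes of abelian sheaves $\mathcal{S}_{\bullet, d} \to \BGHz_d(\bullet)_Y$ on $Y_{\rm Zar}$. Since quasi-isomorphic complexes have canonically isomorphic hypercohomology, this immediately yields
$$\mathbb{H}_{\rm Zar}^{-n}(Y, \mathcal{S}_{\bullet, d}) \simeq \mathbb{H}_{\rm Zar}^{-n}(Y, \BGHz_d(\bullet)_Y).$$
Next I would appeal to Proposition \ref{prop:flasque sh}, which says that each $\mathcal{S}_{n, d}$ is flasque, hence acyclic for the global sections functor $\Gamma(Y, -)$. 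Since $\mathcal{S}_{\bullet, d}$ is a bounded-above complex of $\Gamma(Y, -)$-acyclic sheaves, it may be used to compute the hypercohomology on the left directly via global sections, giving
$$\mathbb{H}_{\rm Zar}^{-n}(Y, \mathcal{S}_{\bullet, d}) \simeq H^{-n}(\Gamma(Y, \mathcal{S}_{\bullet, d})).$$
Finally, evaluating the defining formula \eqref{eqn:sheaf small} at $U = Y$ gives $\mathcal{S}_{n, d}(Y) = z_d(Y, n)/G_d(\emptyset, n) = z_d(Y, n)$, and the induced differentials are the cubical boundary operators of Definition \ref{defn:HCG}. Under the standard convention identifying the degree-$n$ term of the homological complex $z_d(Y, \bullet)$ with the cohomological degree $-n$, we conclude
$$H^{-n}(\Gamma(Y, \mathcal{S}_{\bullet, d})) = H_n(z_d(Y, \bullet)) = \CH_d(Y, n),$$
which chained with the previous two isomorphisms gives \eqref{eqn:semi-loc case}.

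No serious obstacle is expected, since the difficult content, namely the localization theorem (Theorem \ref{thm:localization}, used in Proposition \ref{prop:flasque rep}) and the sheaf property together with flasqueness (Proposition \ref{prop:flasque sh}), has already been established. The only minor point deserving care is the match of homological and cohomological indexing conventions for the cubical complex and its sheafified version; once this is fixed, the identification \eqref{eqn:semi-loc case} follows formally.
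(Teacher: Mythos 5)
Your proposal is correct and follows essentially the same route as the paper: it combines the quasi-isomorphism of Proposition \ref{prop:flasque rep} with the flasqueness of Proposition \ref{prop:flasque sh} to compute the hypercohomology by global sections, and then evaluates $\mathcal{S}_{n,d}(Y)=z_d(Y,n)$ to identify the result with $\CH_d(Y,n)$. The only addition is your explicit remark on the homological versus cohomological indexing, which the paper handles implicitly.
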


\begin{proof}
The morphism $\mathcal{S}_{\bullet, d}  \to \BGHz_d ( \bullet)_Y$ in \eqref{eqn:SP_sh} is a quasi-isomorphism (Proposition \ref{prop:flasque rep}), while $\mathcal{S}_{\bullet, d}$ is a complex of flasque sheaves (Proposition \ref{prop:flasque sh}). Hence we have
$$
\mathbb{H}_{\rm Zar} ^{-n} (Y, \BGHz_d ( \bullet)_Y)= \mathbb{H}_{\rm Zar} ^{-n} (Y, \mathcal{S}_{\bullet,d} ) = {\rm H}^{-n} \Gamma ( Y, \mathcal{S}_{\bullet,d })= {\rm H}_{n} \Gamma ( Y, \mathcal{S}_{\bullet, d}).
$$

On the other hand, for each $n \geq 0$ by definition
$$
\Gamma (Y,  \mathcal{S}_{n,d}) =  \frac{z_d  (Y, n)}{G_d (Y \setminus Y,n)} = z_d ( Y, n),
$$
so that the $n$-th homology groups of the complex $\Gamma (Y,  {\mathcal{S}}_{\bullet, d})$ is precisely $\CH_d ({Y}, n)$. This proves \eqref{eqn:semi-loc case}. 
\end{proof}

\subsection{Zariski descent}\label{sec:Zariski desc}

We deduce that our groups satisfy the Zariski descent, i.e. the Zariski Mayer-Vietoris property:

\begin{thm}\label{thm:Zar desc 1}
Let $Y$ be a $k$-scheme of finite type. Let $U$ and $V \subset Y$ be Zariski open subsets.

Then we have the short exact sequence of complexes of abelian groups
\begin{equation}\label{eqn:sheafproof1}
0 \to \mathcal{S}_{\bullet,d} (U \cup V) \overset{\delta_0}{\to} \mathcal{S}_{\bullet,d}(U) \oplus \mathcal{S}_{\bullet,d} (V) \overset{\delta_1}{\to} \mathcal{S}_{\bullet,d}(U \cap V) \to 0.
\end{equation}
The complexes $\BGHz_d( \bullet)_Y$ of sheaves satisfy the Zariski Mayer-Vietoris on $Y_{\rm Zar}$.
\end{thm}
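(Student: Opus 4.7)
The plan is to bootstrap from the degree-wise short exact sequence of Corollary \ref{cor:flasque sh} to the level of complexes, and then transfer the resulting Mayer--Vietoris information to $\BGHz_d(\bullet)_Y$ via the quasi-isomorphism of Proposition \ref{prop:flasque rep}.

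First I would check that the homomorphisms $\delta_0$ and $\delta_1$ of Corollary \ref{cor:flasque sh} are compatible with the cubical differential $\partial$ on each $\mathcal{S}_{\bullet,d}(-)$. This is essentially a formality: the differential on $\mathcal{S}_{n,d}(U) = z_d(Y,n)/G_d(Y\setminus U, n)$ is induced by the boundary $\partial$ on $z_d(Y, \bullet)$, and all restriction maps $\rho_V^U$ in the flasque presheaf structure are quotient maps by increasingly large subgroups $G_d(Y\setminus (-), n) \subset z_d(Y, n)$. Since the boundary maps on $z_d(Y, \bullet)$ of course commute with any subgroup inclusion, the induced maps on the quotients commute as well. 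This promotes the short exact sequence of Corollary \ref{cor:flasque sh} to a short exact sequence of complexes of abelian groups, which is \eqref{eqn:sheafproof1}.

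For the Mayer--Vietoris assertion on $\BGHz_d(\bullet)_Y$, I would invoke the following chain of identifications. By Proposition \ref{prop:flasque rep}, the natural morphism $\mathcal{S}_{\bullet, d} \to \BGHz_d(\bullet)_Y$ is a quasi-isomorphism. By Proposition \ref{prop:flasque sh}, each $\mathcal{S}_{n,d}$ is flasque, so $\mathcal{S}_{\bullet, d}$ computes the hypercohomology of $\BGHz_d(\bullet)_Y$. Concretely, for any open $W \subset Y$,
$$
\mathbb{H}_{\rm Zar}^{-n}(W, \BGHz_d(\bullet)_Y) \;=\; \mathbb{H}_{\rm Zar}^{-n}(W, \mathcal{S}_{\bullet, d}|_W) \;=\; \mathrm{H}_n\bigl(\Gamma(W, \mathcal{S}_{\bullet, d})\bigr) \;=\; \mathrm{H}_n(\mathcal{S}_{\bullet, d}(W)),
$$
where the last equality uses that $\mathcal{S}_{\bullet,d}$ is already a sheaf of complexes taking the value $\mathcal{S}_{\bullet,d}(W)$ on $W$.

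Finally I would feed the short exact sequence \eqref{eqn:sheafproof1} of complexes of abelian groups into the long exact sequence in homology, which gives
$$
\cdots \to \mathbb{H}_{\rm Zar}^{-n}(U\cup V, \BGHz_d(\bullet)_Y) \to \mathbb{H}_{\rm Zar}^{-n}(U,\BGHz_d(\bullet)_Y) \oplus \mathbb{H}_{\rm Zar}^{-n}(V, \BGHz_d(\bullet)_Y) \to \mathbb{H}_{\rm Zar}^{-n}(U\cap V, \BGHz_d(\bullet)_Y) \to \cdots,
$$
which is precisely the Zariski Mayer--Vietoris property. There is no real obstacle: the only subtlety is the verification that the restriction homomorphisms defining $\mathcal{S}_{n,d}$ genuinely respect $\partial$ at the level of representatives (which is immediate from the definitions), and that the short exact sequence of Corollary \ref{cor:flasque sh} assembles into a sequence of complexes without any sign or normalization adjustment. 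Once these trivialities are checked, everything follows by purely homological-algebra bookkeeping.
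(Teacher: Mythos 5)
Your proposal is correct and follows essentially the same route as the paper: the short exact sequence comes from Corollary \ref{cor:flasque sh} (the compatibility with $\partial$ being immediate since all maps are induced from $z_d(Y,\bullet)$ on representatives), the long exact homology sequence gives Mayer--Vietoris for $\mathcal{S}_{\bullet,d}$, and the quasi-isomorphism of Proposition \ref{prop:flasque rep} together with flasqueness transfers it to $\BGHz_d(\bullet)_Y$. Your extra step spelling out $\mathbb{H}^{-n}_{\rm Zar}(W,\BGHz_d(\bullet)_Y)\simeq {\rm H}_n(\mathcal{S}_{\bullet,d}(W))$ is just a more explicit version of what the paper leaves implicit.
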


\begin{proof}
The exact sequences \eqref{eqn:sheafproof1} follow from Corollary \ref{cor:flasque sh}. The long exact sequences associated to the short exact sequences in \eqref{eqn:sheafproof1} prove the Zariski Mayer-Vietoris property for $\mathcal{S}_{\bullet,d}$. Since we have a quasi-isomorphism $\mathcal{S}_{\bullet,d} \to \BGHz_d ( \bullet)_Y$ by Proposition \ref{prop:flasque rep}, we deduce the Zariski Mayer-Vietoris property for  $\BGHz_d (\bullet)_Y$. 
\end{proof}

\end{document}